\def\th@plain{%
	\thm@notefont{}
	\itshape 
}
\def\th@definition{%
	\thm@notefont{}
	\normalfont 
}
\numberwithin{equation}{section}
\newtheorem{theorem}{Theorem}[section]
\newtheorem{proposition}[theorem]{Proposition}
\newtheorem{corollary}[theorem]{Corollary}
\newtheorem{lemma}[theorem]{Lemma}
\theoremstyle{definition}
\newtheorem{definition}[theorem]{Definition}
\theoremstyle{remark}
\newtheorem{remark}[theorem]{Remark}
\newcommand{\de}{\partial}
\newcommand{\N}{\mathbb{N}}
\newcommand{\R}{\mathbb{R}}
\newcommand{\C}{\mathbb{C}}
\newcommand{\mres}{\mathbin{\vrule height 1.6ex depth 0pt width
		0.13ex\vrule height 0.13ex depth 0pt width 1.3ex}}
\newcommand{\ang}[1]{\langle #1 \rangle}
\newcommand{\mz}{\frac{1}{2}}
\newcommand{\inj}{\operatorname{inj}(M)}
\newcommand{\nin}{\not\in}
\renewcommand{\epsilon}{\varepsilon}
\newcommand{\dvol}{d\mathrm{vol}}
\DeclareMathOperator{\spt}{spt}
\DeclareMathOperator{\dist}{dist}
\DeclareMathOperator{\divergence}{div}
\DeclareMathOperator{\Sym}{Sym} 
\DeclareMathOperator{\tr}{tr}
\DeclareMathOperator{\Ric}{Ric}
\DeclareMathOperator{\End}{End}
\title[Parabolic $U(1)$-Higgs equations and mean curvature flows]{The parabolic $\bm{U(1)}$-Higgs equations and codimension-two mean curvature flows}
\author[D. Parise]{Davide Parise}
\address{University of California San Diego, Department of Mathematics, 9500 Gilman Drive \# 0112, La Jolla, CA 92093, United States of America}
\email{dparise@ucsd.edu}
\author[A. Pigati]{Alessandro Pigati}
\address{Bocconi University, Department of Decision Sciences, Via Guglielmo R\"ontgen 1, 20136 Milano, Italy}
\email{alessandro.pigati@unibocconi.it}
\author[D. Stern]{Daniel Stern}
\address{Cornell University, Department of Mathematics, 310 Malott Hall, Ithaca, NY 14853, United States of America}
\email{daniel.stern@cornell.edu}
\begin{document}

\begin{abstract}
We develop the asymptotic analysis as $\epsilon\to 0$ for the natural gradient flow of the self-dual $U(1)$-Higgs energies 
$$E_{\epsilon}(u,\nabla)=\int_M\left(|\nabla u|^2+\epsilon^2|F_{\nabla}|^2+\frac{(1-|u|^2)^2}{4\epsilon^2}\right)$$
on Hermitian line bundles over closed manifolds $(M^n,g)$ of dimension $n\ge 3$, showing that solutions converge in a measure-theoretic sense to codimension-two mean curvature flows---i.e., integral $(n-2)$-Brakke flows---generalizing results of \cite{PigatiStern} from the stationary case. Given any integral $(n-2)$-cycle $\Gamma_0$ in $M$, these results can be used together with the convergence theory developed in \cite{gammaConv} to produce nontrivial integral Brakke flows starting at $\Gamma_0$ with additional structure, similar to those produced via Ilmanen's elliptic regularization.
\end{abstract}
\maketitle

\tableofcontents 

\section{Introduction} 

Families of submanifolds $\Sigma_t\subset M$ moving by mean curvature inside an ambient Riemannian manifold $(M^n,g)$ tend to develop singularities in finite time, and one of the main challenges in the study of mean curvature flow 
is finding natural ways to continue the flow through singularities. The first and most general notion of a weak solution for mean curvature flow was introduced by Brakke \cite{Brakke}, who identified a natural extension to the setting of varifolds, satisfying desirable compactness and partial regularity properties. In general these Brakke flows are highly non-unique, allowing for pathological behaviors like instantaneous vanishing of the flow, but within this large class of weak solutions one can find distinguished flows with more regular behavior via natural approximation schemes, like Ilmanen's elliptic regularization \cite{EllipticReg}.

For hypersurfaces, another natural regularization of the mean curvature flow comes from the parabolic Allen--Cahn equation
\begin{equation}\label{pac}
	\frac{\partial u^{\epsilon}}{\partial t}=\Delta u^{\epsilon}-\frac{W'(u^{\epsilon})}{\epsilon^2}
\end{equation}
for scalar functions $u^{\epsilon}:M\times[0,\infty)\to \mathbb{R}$, where $W: \mathbb{R}\to [0,\infty)$ is a double-well potential like $W(u)=\frac{1}{4}(1-u^2)^2.$
First studied as a model for phase separation \cite{AllenCahn}, \cref{pac} arises variationally as the gradient flow for the energy functional $E_{\epsilon}(u)=\int_M(\frac{\epsilon}{2}|du|^2+\frac{W(u)}{\epsilon})$, which has long been known to approximate the area functional for hypersurfaces in a weak sense as $\epsilon\to 0$ \cite{ModicaMortola, Sternberg}. In the 1990s, a series of papers \cite{EvansSonerSouganidis, BronsardKohn, Ilmanen} confirmed a long-suspected link between \cref{pac} and mean curvature flow of hypersurfaces, culminating in Ilmanen's proof that the energy measures $(\frac{\epsilon}{2}|du^{\epsilon}|^2+\frac{W(u^{\epsilon})}{\epsilon})\,\dvol_g$ for solutions of \cref{pac} converge to codimension-one rectifiable Brakke flows as $\epsilon\to 0$ \cite{Ilmanen}. Subsequent work of Tonegawa proved the integrality of these limiting Brakke flows (up to a universal constant depending on $W$) \cite{Tonegawa}, and more recently Hensel--Laux established a weak-strong uniqueness property, showing that Brakke flows obtained from the Allen--Cahn regularization coincide with classical mean curvature flow wherever the latter is defined \cite{HenselLaux}. Meanwhile, the stationary case of these results, linking critical points of $E_{\epsilon}$ to minimal hypersurfaces as in \cite{HutchinsonTonegawa, TonWick}, has recently seen a number of exciting applications to the existence theory for minimal hypersurfaces in general Riemannian manifolds (see, e.g., \cite{Guaraco, GasparGuaraco, ChoMan1, ChoMan2}, among others). 

In the present paper, building on the work of \cite{PigatiStern, gammaConv}, we introduce an analog of the Allen--Cahn regularization for \emph{codimension-two} mean curvature flows via a natural parabolic system arising in gauge theory. More precisely, on a Hermitian line bundle $L\to M$ over a Riemannian manifold $M^n$, we consider families $(u_t,\nabla_t)=(u_t^\epsilon,\nabla_t^\epsilon)$ of sections $u_t\in \Gamma(L)$ and metric-compatible connections $\nabla_t=\nabla_0-i\alpha_t$ evolving by the nonlinear parabolic system 
\begin{align} \label{gf}
	\left\{
	\begin{aligned} 
		\partial_t u_t&=-\nabla_t^*\nabla_tu_t+{\textstyle\frac{1}{2\epsilon^2}}(1-|u_t|^2)u_t, \\
		\partial_t\alpha_t&=-d^*\omega_t+\epsilon^{-2}\langle iu_t,\nabla_tu_t\rangle,
	\end{aligned}
	\right.
\end{align}
giving the $L^2$-gradient flow for the \emph{self-dual $U(1)$-Higgs functionals}
$$E_{\epsilon}(u,\nabla)=\int_M \left( |\nabla u|^2+\epsilon^2|F_{\nabla}|^2+ \frac{(1-|u|^2)^2}{4\epsilon^2} \right) \, \dvol_g;$$
here we are following the convention of \cite{PigatiStern}, identifying the curvature $F_{\nabla_t}$ of the metric-compatible connection $\nabla_t$ with a real-valued two-form $\omega_t$ via $F_{\nabla_t}=-i\omega_t$. Originating in the study of superconductivity, the functionals $E_{\epsilon}$ and their critical points have received considerable attention from the gauge theory community since the 1980s, particularly when $M$ is a surface or a K\"ahler manifold \cite{Taubes1, Taubes2, JaffeTaubes, GarciaPrada, Bradlow}. In \cite{PigatiStern}, the second- and third-named authors studied the asymptotic behavior as $\epsilon\to 0$ of critical points for $E_{\epsilon}$ on arbitrary higher-dimensional manifolds in the natural bounded-energy regime $E_{\epsilon}(u_{\epsilon},\nabla_{\epsilon})=O(1)$, proving that solutions concentrate along stationary integral $(n-2)$-varifolds, and applying this to obtain a new construction of codimension-two minimal varieties in closed Riemannian manifolds. The main theorem of the present paper extends the asymptotic analysis of \cite{PigatiStern} from the stationary setting to the parabolic system \cref{gf}, providing a codimension-two analog for the results of Ilmanen and Tonegawa \cite{Ilmanen, Tonegawa} in the Allen-Cahn setting.

\begin{theorem} \label{thmBrakke}
	Let $(M^n,g)$ be a closed, oriented, Riemannian manifold of dimension $n \geq 3$, and let $L\to M$ be a Hermitian line bundle over $M$. Let $(u^{\epsilon}_t, \nabla_t^{\epsilon}=\nabla_0^{\epsilon}-i\alpha^{\epsilon}_t)_{t \geq 0}$ solve \cref{gf} with smooth initial condition $(u^{\epsilon}_0, \nabla^{\epsilon}_0),$ with $E_\epsilon(u^{\epsilon}_0, \nabla^{\epsilon}_0) \leq \Lambda$. 
	Then, there exist a subsequence $(\epsilon_i)_{i \in \mathbb{N}}$ with $\epsilon_i\to 0$ and a family of Radon measures $(\mu_t)_{t \geq 0}$ on $M$ such that 
	\begin{enumerate}[(i)]
		\item we have the weak-$*$ convergence of Radon measures 
		$$\mu_t^{\epsilon}:=\left(|\nabla_t^{\epsilon}u_t^{\epsilon}|^2+\epsilon^2|F_{\nabla_t^{\epsilon}}|^2+\frac{(1-|u_t^{\epsilon}|^2)^2}{4\epsilon^2}\right)\,\dvol_g \rightharpoonup^* \mu_t$$ for all $t > 0$; 
		\item for $\mathcal{L}^1$-a.e.\ $t > 0$ the measures $\mu_t$ are integer $(n - 2)$-rectifiable; 
		\item the family $(\mu_t)_{t \geq 0}$ defines a Brakke flow, as described in \cref{brakke.def}. 
	\end{enumerate} 
\end{theorem}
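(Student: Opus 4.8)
The proof will follow the strategy of Ilmanen's convergence theorem for the Allen--Cahn flow \cite{Ilmanen} and Tonegawa's integrality argument \cite{Tonegawa}, with the role of the one-dimensional heteroclinic profile played by the planar vortex and the stationary analysis supplied by \cite{PigatiStern}. Since \cref{gf} is the $L^2$-gradient flow of $E_\epsilon$, the energy $t\mapsto E_\epsilon(u_t^\epsilon,\nabla_t^\epsilon)$ is nonincreasing, so it stays $\leq\Lambda$, and integrating the dissipation identity gives the space-time bound $\int_0^\infty\!\int_M(|\partial_tu_t^\epsilon|^2+|\partial_t\alpha_t^\epsilon|^2)\,\dvol_g\,dt\leq\Lambda$. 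The maximum principle applied to the evolution of $|u_t^\epsilon|^2$ keeps $\|u_t^\epsilon\|_{\infty}\leq\max\{1,\|u_0^\epsilon\|_{\infty}\}$, and parabolic regularity provides $\epsilon$-scaled interior estimates on compact subsets of $M\times(0,\infty)$. From the uniform mass bound $\mu_t^\epsilon(M)\leq\Lambda$ one extracts, along a subsequence and for a.e.\ $t$, weak-$*$ limits $\mu_t$, and the monotonicity of the total energy together with the dissipation bound upgrades this to convergence for every $t>0$ by a now-standard argument (cf.\ \cite{Ilmanen}), proving (i).

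The analytic heart of the matter is a Huisken-type almost-monotonicity formula for the Gaussian-weighted energy. Fixing a spacetime point $(x_0,t_0)$ and letting $\rho=\rho_{(x_0,t_0)}$ be the $(n-2)$-dimensional backward heat kernel on $(M,g)$ cut off near the injectivity radius, one computes from \cref{gf} that $\frac{d}{dt}\int_M\rho\,d\mu_t^\epsilon$ is bounded above by a negative Huisken term of the form $-\int_M|H_\epsilon+\rho^{-1}(\nabla\rho)^{\perp}|^2\rho\,d\mu_t^\epsilon$, an ambient-curvature term $Ce^{Ct_0}\int_M\rho\,d\mu_t^\epsilon$ (absorbed at small scales as in \cite{Ilmanen}), and a \emph{discrepancy} error — the term measuring the failure of equipartition between the radial and transverse parts of the energy density — where $H_\epsilon$ and the normal projection are read off from the stress-energy tensor $T_\epsilon$ of $E_\epsilon$. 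Showing the discrepancy error is negligible in an integrated sense as $\epsilon\to0$, by combining the space-time dissipation bound with the discrepancy and clearing-out estimates of \cite{PigatiStern}, yields genuine almost-monotonicity, hence a well-defined Gaussian density $\Theta(\mu,(x_0,t_0))$ and uniform upper and lower density bounds for the $\mu_t$.

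The space-time dissipation bound implies that for $\mathcal L^1$-a.e.\ $t>0$ one has $\liminf_{\epsilon\to0}\int_M(|\partial_tu_t^\epsilon|^2+|\partial_t\alpha_t^\epsilon|^2)\,\dvol_g<\infty$, so that along a $t$-dependent subsequence the slices $(u_t^\epsilon,\nabla_t^\epsilon)$ are almost critical for $E_\epsilon$ with uniformly controlled first variation; feeding them into the stationary concentration analysis of \cite{PigatiStern} — energy concentration along an integral $(n-2)$-varifold with multiplicity quantized by the planar-vortex energy, which goes through in this almost-critical regime — shows that, up to a universal multiplicative constant, $\mu_t$ is the weight measure of an integral $(n-2)$-varifold, and in particular integer $(n-2)$-rectifiable, proving (ii). Finally, differentiating $\int_M\phi\,d\mu_t^\epsilon$ for $\phi\geq0$ and using \cref{gf}, the resulting dissipation term is controlled by $\int_M\phi(|\partial_tu_t^\epsilon|^2+|\partial_t\alpha_t^\epsilon|^2)$ and the transport term is governed by $T_\epsilon$; modulo the vanishing discrepancy these converge, respectively, to $-\int\phi|H|^2\,d\mu_t$ — with $H\in L^2(\mu_t)$ the generalized mean curvature of the limiting varifold — and to $\int\nabla\phi\cdot H\,d\mu_t$, while lower semicontinuity of the dissipation (the parabolic analog of Ilmanen's estimate, again relying on equipartition) preserves the inequality in the limit, establishing the Brakke flow property (iii) of \cref{brakke.def}.

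The principal obstacle — as in \cite{Ilmanen,Tonegawa}, but aggravated by the presence of the gauge field — is controlling the discrepancy, i.e.\ proving the asymptotic equipartition of energy in an integrated space-time sense sharply enough that both the monotonicity formula and Brakke's inequality survive the passage to the limit $\epsilon\to0$. This demands a delicate interplay between the parabolic dissipation estimates and the stationary concentration analysis of \cite{PigatiStern} (its clearing-out lemma and the fine structure of its defect measure), and one must additionally handle the curvature contribution $\epsilon^2|F_{\nabla}|^2$ and the evolution of the connection $\alpha_t$ in \cref{gf}, which have no counterpart in the scalar Allen--Cahn problem.
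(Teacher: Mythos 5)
Your outline captures the right broad skeleton---Huisken-type monotonicity, discrepancy control, clearing-out, a vortex-quantization blow-up, and an $\epsilon$-version of Brakke's inequality---but there are two structural gaps where the argument as sketched would not close, both rooted in a feature the paper explicitly flags as the point of departure from \cite{Ilmanen,Tonegawa}.

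First, you write the almost-monotonicity defect as a ``negative Huisken term of the form $-\int_M |H_\epsilon + \rho^{-1}(\nabla\rho)^\perp|^2\,\rho\,d\mu_t^\epsilon$,'' implicitly assuming a diffuse mean curvature $H_\epsilon$ can be read off slice-by-slice, as in Allen--Cahn where $H_\epsilon=(-\Delta u+W'/\epsilon^2)/\epsilon|\nabla u|$ plays this role. The paper's introduction makes exactly this observation: for \cref{gf} there is no obvious pointwise diffuse analogue of $\vec H$. The good term in the monotonicity formula (see \cref{mono.quantitative}) is instead of Struwe type, $-2(T-s)\int h[\,|\dot u+h^{-1}(\nabla)_{dh}u|^2+\epsilon^2|\dot\alpha+h^{-1}\iota_{dh}\omega|^2\,]$, and is not used directly to manufacture an $H_\epsilon$. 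Relatedly, the discrepancy estimates you propose importing from \cite{PigatiStern} are stationary; the paper develops genuinely parabolic versions (\cref{disc.refined,prop.nabla.u.bd}) via heat-kernel arguments, where the uniform bound $\xi_t\le C(M,\Lambda,t_0)$ is obtained from the coarse \cref{crudeDiscrepancy} bootstrapped through the energy bound \cref{ener.bd}.

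Second, your rectifiability/integrality argument relies on a.e.-$t$ slicing: from $\liminf_\epsilon\int_M(|\dot u_t|^2+\epsilon^2|\dot\alpha_t|^2)<\infty$ you declare the slice ``almost critical'' and feed it into the stationary concentration analysis. But a bounded dissipation slice is not almost critical---the dissipation does not tend to zero, and the limit slice at best has generalized mean curvature in $L^2(\mu_t)$, not vanishing first variation. Rectifiability then requires either Preiss/Allard-type machinery under $L^2$ first variation plus density bounds (the route of \cite{BethuelOrlandiSmets}), or what the paper actually does: a parabolic blow-up at spacetime points $(p,T)$ in a good set $G$, where the maximal ratio of the dissipation measure $\nu$ to the energy measure $\mu$ is finite. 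After dilation by $s_\ell\to 0$ with a diagonal choice $\epsilon(s_\ell)/s_\ell\to 0$, the rescaled dissipation vanishes (\cref{diagonal.pairs}), so the tangent object is a genuine stationary generalized $(n-2)$-varifold, hence (via \cite{AmbrosioSoner}) a constant multiple of a plane (\cref{tan.flat}). Integrality then follows from two-dimensional slicing plus a two-dimensional blow-up lemma (\cref{blowup.lim}) weakened to the ``near-solution'' regime made possible by the same parabolic dilation. Without the parabolic rescaling you never enter the regime where the stationary analysis applies; your proposal conflates ``bounded'' with ``vanishing.''

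A smaller but real gap: you invoke lower semicontinuity of the dissipation to produce the $-\int\phi|\vec H_t|^2$ term in Brakke's inequality, but the naive Cauchy--Schwarz from $\divergence\mathcal T^\epsilon_t$ yields the factor $4$ rather than $2$, i.e.\ only $\int\phi|\vec H_t|^2\leq\liminf_\epsilon\int 4\phi(|\dot u|^2+\epsilon^2|\dot\alpha|^2)$, which does not close against the shrinking term $-2\int\phi(\cdots)$ in \cref{epsBrakke}. Recovering the sharp constant (\cref{sharper.brakke}) again uses the blow-up structure: for entire stationary planar solutions the operator norm satisfies $|\nabla u|_{op}^2+\epsilon^2|\omega|^2=\tfrac12 e^\epsilon$, and one has to propagate this via an $L^p\to L^2$ limiting argument with $p\nearrow 2$. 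This equipartition step cannot be hand-waved; it is where the planar vortex enters Brakke's inequality, not merely the integrality statement.
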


\begin{remark}
	While we have opted to work on compact manifolds without boundary---in view of the applications we have in mind, and for simplicity of presentation---most of the analysis is essentially local, and with some additional work one should be able to obtain natural analogs of \cref{thmBrakke} on manifolds with boundary (with suitable boundary conditions) and a large class of complete noncompact manifolds (in particular $\mathbb{R}^n$) under natural constraints on the initial data.
\end{remark}

Our proof of rectifiability and Brakke's inequality follow a slightly different route compared to \cite{Ilmanen}, since we do not have an obvious diffuse analogue of the generalized mean curvature $\vec{H}$ for each pair $(u^\epsilon_t,\nabla_t^\epsilon)$. For instance, in order to show rectifiability, rather than bounding the mean curvature of $\mu_t$, we perform a blow-up analysis at carefully chosen (but in some sense ``generic'') points $(x,t)$ in the spacetime such that the dilated sequence becomes almost stationary. In the limit we obtain a (generalized) stationary varifold, which turns out to be rectifiable (via the analysis of \cite{AmbrosioSoner}), and in particular a constant multiple of an $(n-2)$-plane. The proof of integrality also differs slightly from \cite{Tonegawa}, in that it relies more directly on the study of suitable two-dimensional slices, where $(u^\epsilon_t,\nabla_t^\epsilon)$ (after dilation) resembles an entire critical point on the plane.

Prior to publication, \cref{thmBrakke} has already seen applications to the study of critical points for the functionals $E_{\epsilon}$. In particular, the asymptotic analysis furnished by \cref{thmBrakke} provides a crucial ingredient in the work of De Philippis and the second-named author in the construction of critical points $(u^{\epsilon},\nabla^{\epsilon})$ converging to prescribed non-degenerate minimal submanifolds of codimension two in \cite{GuidoPigati}, a result which provides a kind of converse to the asymptotic analysis of critical points obtained in \cite{PigatiStern}, analogous to results of Pacard--Ritor\'e in the Allen--Cahn setting \cite{PacRit} (see also \cite{BdP} for an alternative approach via gluing methods).

As in the stationary case \cite{PigatiStern}, it is instructive to compare the results of \cref{thmBrakke} with related work on the (un-gauged) parabolic Ginzburg--Landau equations
\begin{equation}\label{cgl}
	\frac{\partial v_t^{\epsilon}}{\partial t}=\Delta v_t^{\epsilon}-\epsilon^{-2}(1-|v_t^{\epsilon}|^2)v_t^{\epsilon}
\end{equation}
for complex-valued maps $v_t^{\epsilon}:M\to \mathbb{C}$, which arise as the gradient flow for the functionals
$$F_{\epsilon}(v):=\int_M\left(\frac{1}{2}|dv|^2+\frac{(1-|v|^2)^2}{4\epsilon^2}\right).$$
In \cite{BethuelOrlandiSmets}, building on the analysis of \cite{AmbrosioSoner, BethuelBrezisOrlandi, LinRiviere}, it is shown that for families of maps $v^{\epsilon}_t\in C^{\infty}(M,\mathbb{C})$ evolving via \cref{cgl} with the natural energy bound $F_{\epsilon}(v^{\epsilon}_t)=O(\lvert\log\epsilon\rvert)$, the energy measures $\frac{|dv^{\epsilon}_t|^2}{\pi\lvert\log\epsilon\rvert}\,\dvol_g$ converge as $\epsilon\to 0$ to a sum of a diffuse measure and an $(n-2)$-rectifiable (not necessarily integral) Brakke flow. While the diffuse component of the limiting measure vanishes under mild assumptions on the initial data \cite[Theorem~C]{BethuelOrlandiSmets}, the results of \cite{PigatiStern.quant} show that integrality of the limiting Brakke flow should not be expected in general (not even in the stationary case) due to long-range interactions between distant components of the ``vorticity set'' $\{|v^{\epsilon}_t|\leq \frac{1}{2}\}$. In particular, while the results of \cite{AmbrosioSoner, BethuelOrlandiSmets} represent a major achievement in the study of the parabolic Ginzburg--Landau equations, they should not quite be thought of as codimension-two analogs of \cite{Ilmanen, Tonegawa}, in light of the fundamental qualitative differences between the behavior of solutions for \cref{cgl} and their (formally identical) scalar counterparts. 

By contrast, the asymptotic analysis of the system \cref{gf} (as in the stationary case \cite{PigatiStern}) bears striking similarities to that of the parabolic Allen--Cahn equation in \cite{Ilmanen,Tonegawa}: energy concentrates in a (parabolic) $O(\epsilon)$-neighborhood of the zero set $\{u_t^{\epsilon}=0\}$ and decays exponentially away from that region, and a central role in the analysis is played by the ``discrepancy'' $\epsilon^2|F_{\nabla_t^{\epsilon}}|^2-\frac{(1-|u_t^\epsilon|^2)^2}{4\epsilon^2}$ between the Yang--Mills and potential components of the energy density, mirroring the role of the discrepancy between Dirichlet and potential components in the Allen--Cahn setting \cite{Ilmanen, Tonegawa}. In particular, as discussed in \cite{gammaConv}, controlling this discrepancy is the key to obtaining a monotonicity result modeled on that of Huisken in the mean curvature flow setting \cite{Huisk} for the system \cref{gf}.

One important feature of the Brakke flows arising from \cref{gf}, shared by those obtained from the system \cref{cgl} and those constructed by elliptic regularization \cite{EllipticReg}, is the existence of an additional current structure for the spacetime track, allowing one to rule out instantaneous vanishing of the Brakke flows obtained in \cref{thmBrakke} for natural choices of initial data. Namely, we have the following result, showing existence of \emph{enhanced motions} arising from \cref{gf} for an arbitrary initial $(n-2)$-cycle (cf.\ \cite[Theorem~D]{BethuelOrlandiSmets} for solutions of \cref{cgl}).

\begin{theorem} \label{thmEnhanced}
	Let $L\to M$ be as above, and let $\Gamma_0$ be an integral $(n - 2)$-cycle Poincaré dual to the Euler class $c_1(L) \in H^2(M; \mathbb{Z})$. Then there exist a sequence of solutions $(u^{\epsilon}_t,\nabla^{\epsilon}_t)$ of \cref{gf}
	and an integer multiplicity $(n - 1)$-current $\Gamma\in {\bf I}_{loc}^{n-1}( M\times[0,\infty))$ such that $\partial \Gamma = \Gamma_0$, 
	$$\mu_0 =\lim_{\epsilon\to 0}e^{\epsilon}(u^{\epsilon}_0,\nabla^{\epsilon}_0)\,\dvol_g= 2 \pi \vert \Gamma_0 \vert,$$
	and denoting by $\mu_t$ the associated limiting Brakke flow, the pair $(\Gamma, \frac{1}{2\pi} \mu_t)$ defines an enhanced motion in the sense of \cite{EllipticReg} (see \cref{defEnh} below). In particular, $\frac{1}{2\pi}\mu_t\geq |\Gamma_t|$ for almost every time-slice $\Gamma_t$ of $\Gamma$.
\end{theorem}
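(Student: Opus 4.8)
The plan is to combine the asymptotic analysis of \cref{thmBrakke} with the compactness theory of \cite{gammaConv}, applied to a well-chosen initial datum. Since $\Gamma_0$ is Poincar\'e dual to $c_1(L)$, the recovery-sequence construction of \cite{gammaConv} (a parabolic-ready refinement of the one in \cite{PigatiStern}) furnishes smooth pairs $(u_0^\epsilon,\nabla_0^\epsilon)$ with $|u_0^\epsilon|\le 1$, with $E_\epsilon(u_0^\epsilon,\nabla_0^\epsilon)\to 2\pi\,\mathbf{M}(\Gamma_0)$, with $e^\epsilon(u_0^\epsilon,\nabla_0^\epsilon)\,\dvol_g\rightharpoonup 2\pi|\Gamma_0|$, and with the normalized vorticities $\Gamma_0^\epsilon$ dual to $\frac{1}{2\pi}\omega_{\nabla_0^\epsilon}$ converging to $\Gamma_0$ in the flat topology. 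In particular $E_\epsilon(u_0^\epsilon,\nabla_0^\epsilon)\le\Lambda:=2\pi\,\mathbf{M}(\Gamma_0)+1$ for $\epsilon$ small, so we may let these data evolve by \cref{gf} (globally defined for smooth initial data) and apply \cref{thmBrakke}: along a subsequence $\epsilon_i\to 0$ we obtain a Brakke flow $(\mu_t)_{t\ge 0}$ with $\mu_t^{\epsilon_i}\rightharpoonup\mu_t$ for every $t>0$ and $\mu_t$ integer $(n-2)$-rectifiable for a.e.\ $t$; we set $\mu_0:=2\pi|\Gamma_0|$, consistently with $e^\epsilon(u_0^{\epsilon_i},\nabla_0^{\epsilon_i})\,\dvol_g\rightharpoonup 2\pi|\Gamma_0|$. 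Since $E_\epsilon$ is a Lyapunov functional for \cref{gf}, we have $\mu_t^\epsilon(M)\le\Lambda$ for every $t$, and the dissipation identity gives $\int_0^\infty\!\!\int_M(|\partial_t u_t^\epsilon|^2+|\partial_t\alpha_t^\epsilon|^2)\,\dvol_g\,dt\le C\Lambda$.

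Next I would construct the spacetime current. Working in temporal gauge on $M\times[0,\infty)$---that is, keeping $\nabla_t^\epsilon$ in the spatial directions and the trivial connection in the $\partial_t$-direction---the curvature of the induced connection on the pulled-back bundle is the closed $2$-form $\widetilde\omega^\epsilon=\omega_t^\epsilon+dt\wedge\partial_t\alpha_t^\epsilon$, and I define the $(n-1)$-current $\Gamma^\epsilon$ on $M\times[0,\infty)$ by $\Gamma^\epsilon(\eta):=\frac{1}{2\pi}\int_{M\times[0,\infty)}\widetilde\omega^\epsilon\wedge\eta$. The Bianchi identity $d\widetilde\omega^\epsilon=0$ together with Stokes' theorem give $\partial\Gamma^\epsilon=\pm\,\Gamma_0^\epsilon$ on $M\times\{0\}$ (in particular $\Gamma^\epsilon$ has no interior boundary), while slicing in the time variable recovers $\langle\Gamma^\epsilon,t\rangle=\frac{1}{2\pi}\omega_t^\epsilon$. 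On $M\times[0,T]$ the mass of $\Gamma^\epsilon$ is bounded by $\frac{1}{2\pi}\int_0^T\!\!\int_M\big(|\omega_t^\epsilon|+|\partial_t\alpha_t^\epsilon|\big)\,\dvol_g\,dt$; the second term is $\le C(T,\Lambda)$ by Cauchy--Schwarz and the dissipation bound, and the first is $\le C(M,g,L,\Lambda)\,T$ by the $L^1$-estimate for the curvature from \cite{PigatiStern}---a consequence of the Bogomol'nyi-type bound $\epsilon^2|\omega|^2+\frac{(1-|u|^2)^2}{4\epsilon^2}\ge|\omega|(1-|u|^2)$ together with the concentration of $\omega_t^\epsilon$ near $\{u_t^\epsilon=0\}$ and the exponential decay of the energy away from that set. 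Hence $\{\Gamma^\epsilon\}$ and $\{\partial\Gamma^\epsilon\}$ have locally uniformly bounded mass.

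Passing to the limit, the compactness theory of \cite{gammaConv}---which shows that $\frac{1}{2\pi}\omega_t^\epsilon$, and more generally the spacetime currents $\Gamma^\epsilon$, are flat-close to integer-multiplicity currents of controlled mass---yields, along a further subsequence, flat convergence $\Gamma^\epsilon\to\Gamma\in{\bf I}_{loc}^{n-1}(M\times[0,\infty))$ with $\partial\Gamma=\Gamma_0$ (identifying $\Gamma_0$ with $\Gamma_0\times\{0\}$ and using $\Gamma_0^\epsilon\to\Gamma_0$) and no interior boundary. By the standard commutation of slicing with flat limits, for a.e.\ $t$---after a diagonal subsequence---the slice $\Gamma_t:=\langle\Gamma,t\rangle$ equals $\lim_i\frac{1}{2\pi}\omega_t^{\epsilon_i}$, an integral $(n-2)$-cycle carried by $\spt\mu_t$. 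To obtain the inequality one compares $(n-2)$-densities at $\mathcal{H}^{n-2}$-a.e.\ $x\in\spt\Gamma_t$: the self-dual structure of the blow-ups of $(u_t^{\epsilon_i},\nabla_t^{\epsilon_i})$ at scale $\epsilon_i$ (as in the proof of \cref{thmBrakke}(ii) and in \cite{PigatiStern}) forces $\Theta^{n-2}\big(\tfrac{1}{2\pi}\mu_t,x\big)$ to be no less than the multiplicity of $\Gamma_t$ at $x$, while off $\spt\Gamma_t$ one trivially has $|\Gamma_t|=0\le\tfrac{1}{2\pi}\mu_t$; hence $|\Gamma_t|\le\tfrac{1}{2\pi}\mu_t$ as measures. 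Since $(\mu_t)$ is an integral Brakke flow with $\mu_t\rightharpoonup\mu_0$ as $t\to 0^+$ (by \cref{thmBrakke} and the monotonicity formula), $\Gamma$ is an integer-multiplicity current with $\partial\Gamma=\Gamma_0$, $\spt\Gamma_t\subseteq\spt\mu_t$ and $|\Gamma_t|\le\tfrac{1}{2\pi}\mu_t$, the pair $(\Gamma,\tfrac{1}{2\pi}\mu_t)$ satisfies all the requirements of \cref{defEnh}, which proves the theorem.

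The main difficulty is concentrated in the two middle steps: the mass bounds and, above all, the integrality of $\Gamma$ together with the sharp inequality $|\Gamma_t|\le\tfrac{1}{2\pi}\mu_t$---that is, the absence of cancellation in the limit. This is precisely where the gauge-theoretic structure of \cref{gf} is indispensable, in contrast with the ungauged equations \cref{cgl}, where integrality may genuinely fail; it is exactly the content of the convergence theory of \cite{gammaConv}, built on the blow-up analysis underlying \cref{thmBrakke}. The remaining measure-theoretic bookkeeping---matching the various null sets of times and subsequences and commuting slicing with the flat limit---is routine but should be carried out with some care.
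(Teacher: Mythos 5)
Your overall template (recovery sequence for initial data $\to$ gradient flow $\to$ spacetime current via curvature/temporal gauge $\to$ [gammaConv] compactness) parallels the paper, but you use the raw curvature $\omega_t^\epsilon$ where the paper crucially uses the gauge-invariant Jacobian $J(u,\nabla)=\psi(u,\nabla)+(1-|u|^2)\omega_\nabla$, and that choice is not a cosmetic difference: it is what makes the mass estimates close.

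There is a concrete error in your mass bound. You quote the dissipation identity as $\int_0^\infty\!\!\int_M(|\partial_t u_t^\epsilon|^2+|\partial_t\alpha_t^\epsilon|^2)\le C\Lambda$, but the correct identity \cref{enerID} reads $\int_0^\infty\!\!\int_M(|\partial_t u_t^\epsilon|^2+\epsilon^2|\partial_t\alpha_t^\epsilon|^2)\le\frac{1}{2}E_\epsilon(u_0^\epsilon,\nabla_0^\epsilon)$. The $\epsilon^2$ factor is essential: with the correct bound, Cauchy--Schwarz only gives $\int_0^T\!\!\int_M|\partial_t\alpha_t^\epsilon|\le C(T,\Lambda)\,\epsilon^{-1}$, which blows up as $\epsilon\to 0$. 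Hence the mass of your candidate $\Gamma^\epsilon=\frac{1}{2\pi}\int \widetilde\omega^\epsilon\wedge(\cdot)$ is \emph{not} uniformly bounded on $M\times[0,T]$, and the flat-compactness step collapses. Similarly, the claimed ``$L^1$-estimate for the curvature from \cite{PigatiStern}'' bounding $\int_0^T\!\!\int_M|\omega_t^\epsilon|$ uniformly in $\epsilon$ is not an estimate in that paper, and the energy and discrepancy bounds in hand only give $\int_M|\omega_t^\epsilon|\lesssim\epsilon^{-1}$; obtaining a genuine uniform $L^1$ bound on $\omega$ would require a separate quantitative concentration argument that you do not give. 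Finally, the compactness theorem \cite[Theorem~1.2]{gammaConv} is formulated for $J(u,\nabla)$, not for $\omega$, so invoking it for $\omega$ is also unjustified as stated.

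The paper sidesteps all of this by working with the spacetime Jacobian $\tilde J^\epsilon=J(\tilde u^\epsilon,\widetilde\nabla^\epsilon)$, for which the $dt$-component is $2\langle i\,\partial_t u,\nabla u\rangle+(1-|u|^2)\,\partial_t\alpha$. The second term is bounded pointwise by $\frac{(1-|u|^2)^2}{4\epsilon^2}+\epsilon^2|\partial_t\alpha|^2$ (Young), so the $\epsilon^2$-weight in the dissipation identity is exactly what is needed; this yields both the uniform spacetime energy bound $\int_{M\times[0,T]}e^\epsilon(\tilde u^\epsilon,\widetilde\nabla^\epsilon)\le(T+\tfrac12)E_\epsilon(u_0^\epsilon,\nabla_0^\epsilon)$, so that \cite[Theorem~1.2(i)]{gammaConv} applies directly, and the clean slice inequality $|\Gamma_t|\le\frac{1}{2\pi}\mu_t$ from the pointwise bound $|J(u,\nabla)|\le e^\epsilon(u,\nabla)$, with no need for a blow-up/density argument. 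In addition, the explicit structure of $\tilde J^\epsilon$ furnishes the absolute-continuity estimate $\mathbb{M}(\Gamma\mres(M\times U))\le\mathbb{M}(\Gamma_0)(\mathcal{L}^1(U)+2\sqrt{\mathcal{L}^1(U)})$ needed for condition (ii) of \cref{defEnh}, a condition your proposal acknowledges only implicitly. To repair your argument you would replace $\widetilde\omega^\epsilon$ by $\tilde J^\epsilon$ throughout; this is precisely what the paper does.
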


The proof is similar in spirit to that of \cite[Theorem~D]{BethuelOrlandiSmets}, with the convergence theory developed in \cite{gammaConv} playing a role analogous to that of \cite{JSgamma} in \cite{BethuelOrlandiSmets}. More precisely, for any initial integral $(n-2)$-cycle $\Gamma_0$, we find a sequence $(u_0^{\epsilon},\nabla_0^{\epsilon})$ of initial conditions for \cref{gf} with energy concentrating along $\Gamma_0$ and for which the gauge-invariant Jacobian two-forms $\frac{1}{2\pi}J(u_0^{\epsilon},\nabla_0^{\epsilon})$ defined in \cite{gammaConv} (and \cref{pre.sec} below) converge as $(n-2)$-currents to $\Gamma_0$. We then extend the gradient flow $(u^{\epsilon}_t,\nabla^{\epsilon}_t)$ with initial condition $(u^{\epsilon}_0,\nabla^{\epsilon}_0)$ to a family $(\tilde{u}^{\epsilon},\widetilde{\nabla}^{\epsilon})$ on the pullback bundle of $L$ over $ M\times[0,\infty)$, and obtain the $(n-1)$-current $\Gamma$ as a distributional limit of $\frac{1}{2\pi}J(\tilde{u}^{\epsilon},\widetilde{\nabla}^{\epsilon})$, again appealing to the convergence theory of \cite{gammaConv}.

In particular, since the currents $\Gamma_t$ cannot vanish instantaneously, it follows that the $U(1)$-Higgs regularization (like Ilmanen's elliptic regularization \cite{EllipticReg}) yields nontrivial integral Brakke flows starting from any initial $(n-2)$-cycle $\Gamma_0$. Note that if $c_1(L)\neq 0$, it follows that the associated Brakke flow never vanishes, since the time-slices $\Gamma_t$ all lie in the same nontrivial homology class.

\begin{remark}
	Of course, one can hope for further improvements on the structure of the flows produced in \cref{thmEnhanced}: for instance, one expects that a weak-strong uniqueness result analogous to that of \cite{HenselLaux} in the Allen--Cahn setting should hold for the codimension-two enhanced flows obtained from \cref{thmEnhanced}; relatedly, it seems likely that a Brakke-type regularity result analogous to that obtained in \cite{NguyenWang} holds before passing to the limit $\epsilon\to 0$ in this setting as well.
\end{remark}

\begin{remark}
	In the case where $M$ is a K\"ahler-Einstein manifold of complex dimension two (real dimension four) and $\Gamma_0$ is an immersed \emph{Lagrangian} surface, a well-known result of Smoczyk \cite{Smoczyk} shows that the Lagrangian condition is preserved by the smooth mean curvature flow before singularities form. It would be very interesting to understand whether any natural approximation of the Lagrangian condition is preserved by the $U(1)$-Higgs gradient flow \cref{gf}, at least in the settings relevant to the Thomas--Yau conjecture \cite{ThomasYau}, providing a gauge-theoretic approximation of Lagrangian mean curvature flow for surfaces. In this direction, observe that there is a natural $U(1)$-Higgs counterpart to special Lagrangian surfaces in the hyperk\"ahler setting, corresponding to solutions of the vortex equations considered in \cite{Bradlow,GarciaPrada} after a suitable hyperk\"ahler rotation.
\end{remark}

\subsection{Outline of the paper}
In \cref{pre.sec}, we record some basic properties of solutions to \cref{gf}, and review the definitions of Brakke flow and related concepts needed for a precise formulation of Theorems \ref{thmBrakke}, \ref{thmEnhanced}, and their proofs. In \cref{secEpsIneq}, we show how to obtain uniform bounds on the difference $\epsilon|F_{\nabla}|-\frac{(1-|u|^2)}{2\epsilon}$ for solutions of \cref{gf} in terms of initial energy bounds, and use these to obtain a sharp Huisken-type monotonicity result, improving upon some of the estimates obtained in \cite[Section~6]{gammaConv}. In \cref{decay.sec}, we establish crucial exponential decay and ``clearing-out" results for the energy density of solutions to \cref{gf}, showing that energy concentrates in a parabolic $O(\epsilon)$-neighborhood of the set $\{|u|<1-\beta\}$ for some universal constant $\beta$, and giving certain lower bounds on the $(n-2)$-density of the limiting energy measures $\mu_t$. By studying tangent flows to the limiting family $\mu_t$ at generic points in the support of $\mu$, and relating the small-scale behavior near these points to entire stationary solutions of \cref{gf} on $\mathbb{R}^2$, we establish in Sections \ref{tf.sec} and \ref{sec.int} the rectifiability and integrality, respectively, of the measures $\mu_t$. In \cref{brakke.sec}, we complete the proof of \cref{thmBrakke} by showing that Brakke's inequality is satisfied by the family $\mu_t$, and in \cref{EnMo} we prove \cref{thmEnhanced}, by combining the preceding analysis with the $\Gamma$-convergence theory developed in \cite{gammaConv}.

\subsection*{Acknowledgements}
The authors thank Salvatore Stuvard and Yoshihiro Tonegawa for answering some of their questions about Brakke flows. DS acknowledges the support of the National Science Foundation during the completion of this work through the fellowship DMS-2002055.

\section{Preliminaries and notation}\label{pre.sec} 
Let $L\to M$ be a Hermitian line bundle---equivalently, a rank-two real vector bundle with Euclidean structure and almost complex structure---over a closed, oriented Riemannian manifold $(M^n, g)$ of dimension $n\geq 3$. For $\epsilon>0$, consider the energies
$$E_{\epsilon}(u,\nabla)=\int_M \left( |\nabla u|^2+\epsilon^2|F_{\nabla}|^2+ \epsilon^{-2}W(u) \right) \, \dvol_g,$$
for couples $(u, \nabla)$ of sections $u\in \Gamma(L)$ and metric-compatible connections $\nabla$ on $L$. The nonlinear potential $W \colon L \rightarrow \mathbb{R}$ is given by 
\begin{equation}
	W(u) = \frac{1}{4}(1 - \vert u \vert^2)^2, 
\end{equation}
and $F_\nabla \in \Omega^2(\End(L))$ denotes the curvature of the connection $\nabla$. We identify the curvature $F_\nabla$ with a real, closed two-form $\omega = \omega_\nabla $ via 
\begin{equation*}
	F_\nabla(X, Y) = [\nabla_X, \nabla_Y]u - \nabla_{[X, Y]}u = - i \omega_\nabla(X, Y)u,  
\end{equation*}
for vector fields $X$ and $Y$ on $M$. Recall that the functional $E_\epsilon$ is gauge invariant in the following sense: denoting by $\mathcal{G}$ the group of smooth maps $M\to S^1$ (with the operation of pointwise multiplication), for any $\phi\in\mathcal{G}$ the energy $E_\epsilon$ is invariant under the change of gauge 
\begin{equation*}
	\phi \cdot (u, \nabla) = (\phi u, \nabla - i \phi^*(d\theta)), 
\end{equation*}
corresponding to a fiberwise rotation of $L$.

We will say that the smooth couples $(u_t,\nabla_t=\nabla_0-i\alpha_t)_{t\in[0,\infty)}$ solve the gradient flow equations for $E_{\epsilon}$ if they satisfy the coupled nonlinear heat equations
\begin{align*}
	\left\{
	\begin{aligned} 
		\partial_t u_t&=-\nabla_t^*\nabla_tu_t+{\textstyle\frac{1}{2\epsilon^2}}(1-|u_t|^2)u_t, \\
		\partial_t\alpha_t&=-d^*\omega_t+\epsilon^{-2}\langle iu_t,\nabla_tu_t\rangle,
	\end{aligned}
	\right.
\end{align*}
where $\nabla^*$ denotes the formal $L^2$-adjoint of $\nabla$ and $d^*$ the formal $L^2$-adjoint of $d$. Long-time existence, uniqueness and continuous dependence on the initial data for this flow have been established in \cite[Section~6]{gammaConv}. 

The system \cref{gf} is, formally, the gradient flow of $\frac{1}{2} E_\epsilon$ with respect to the $L^2$-inner product 
\begin{equation*}
	\langle (u, \alpha), (v, \beta) \rangle = \int_M (\langle u, v \rangle + \epsilon^2 \langle \alpha, \beta \rangle)\,\dvol_g,
\end{equation*}
and it is easy to see that solutions of \cref{gf} satisfy the energy identity 
\begin{equation}\label{enerID}
	E_\epsilon(u_t, \nabla_t) + 2 \int_s^t \int_M \left( \vert \dot{u}_\tau \vert^2 + \epsilon^2 \vert \dot{\alpha}_\tau \vert^2 \right) \, \dvol_g \, d\tau = E_\epsilon(u_s, \nabla_s), 
\end{equation}
where $s < t$. 
As an obvious consequence, energy is decreasing along the flow, so if the initial data $(u_0,\nabla_0)$ satisfies the energy bound $E_{\epsilon}(u_0,\nabla_0)\leq \Lambda, $ we have $E_{\epsilon}(u_t,\nabla_t)\leq \Lambda$ for all $t>0$. We sometimes write $(u_t^\epsilon,\nabla_t^\epsilon)$ to emphasize that the pair depends on $\epsilon$, which typically varies along a sequence $\epsilon_j\to 0$.

Throughout this work we will write
\begin{equation*}
	e_t^\epsilon=e^\epsilon(u_t, \nabla_t) := |\nabla_t u_t|^2+\epsilon^2|F_{\nabla_t}|^2+\frac{(1-|u_t|^2)^2}{4\epsilon^2}
\end{equation*}
for the energy density associated to the pair $(u_t,\nabla_t)$. For any fixed $t\geq 0$, we denote by 
\begin{equation*}
	d\mu_t^\epsilon := e^\epsilon(u_t^\epsilon, \nabla_t^\epsilon)\,\dvol_g,  
\end{equation*}
the associated energy measure on $M$, and denote the energy measure on $M\times [0,\infty)$ by
\begin{equation*}
	\mu^\epsilon := \mu_t^{\epsilon} \otimes \mathcal{L}^1(t) , \text{ i.e., }d\mu^\epsilon=d\mu_t^\epsilon \otimes dt,
\end{equation*}
where $\mathcal{L}^1$ denotes the one-dimensional Lebesgue measure. 
It is also convenient to introduce the notation
\begin{equation*}
	d\nu^\epsilon:=(|\dot u_t^\epsilon|^2+\epsilon^2|\dot\alpha_t^\epsilon|^2)\, \dvol_g \otimes dt
\end{equation*}
for the measure on $M\times [0,\infty)$ appearing on the right-hand side of \cref{enerID}, which quantifies the non-stationarity of the pair $(u_t,\nabla_t)$ in an $L^2$ sense.

As in \cite{PigatiStern} and \cite{gammaConv}, for a pair $(u, \nabla)$, let $\psi(u, \nabla)$ be the two-form defined by 
\begin{equation*}
	\psi(u, \nabla)(X, Y) := 2 \langle i \nabla_X u, \nabla_Y u \rangle, 
\end{equation*}
for vector fields $X$ and $Y$, and define the gauge-invariant Jacobian 
\begin{equation}\label{j.def}
	J(u, \nabla) := \psi(u, \nabla) + (1 - \vert u \vert^2) \omega_\nabla. 
\end{equation}
The forms $J(u,\nabla)$ play a central role in the $\Gamma$-convergence theory developed in \cite{gammaConv}, and in \cref{EnMo}, we will invoke those results to obtain the current structure for the spacetime track described in \cref{thmEnhanced}.

\subsection{B\"ochner formulae}
Next, we recall from \cite[Section~6]{gammaConv} some of the key parabolic B\"ochner--Weitzenb\"ock formulae for solutions of \cref{gf}, which we use repeatedly in the proof of \cref{thmBrakke}. Given a family of pairs $(u_t,\nabla_t)$ solving \cref{gf} with curvature two-forms $F_{\nabla_t}=-i\omega_t$, denoting $\Delta_H=d^*d+dd^*$ the (positive semidefinite) Hodge Laplacian, we have
\begin{equation}
	\epsilon^2(\partial_t+\Delta_H)\omega_t=\psi(u_t,\nabla_t)-|u_t|^2\omega_t,
\end{equation}
and taking the inner product with $\omega_t$ gives the parabolic Bochner identity
\begin{equation}\label{omega.boch}
	-\epsilon^2(\partial_t+d^*d)\frac{1}{2}|\omega_t|^2=|u_t|^2|\omega_t|^2+\epsilon^2|D\omega_t|^2-\langle \psi(u_t,\nabla_t),\omega_t\rangle+\epsilon^2\mathcal{R}_2(\omega_t,\omega_t),
\end{equation}
where $\mathcal{R}_2$ denotes the Weitzenb\"ock curvature operator for two-forms. Likewise, taking the inner product with $u_t$ in the first equation of \cref{gf} gives
\begin{equation}\label{u.boch}
	-(\partial_t+d^*d)\frac{1}{2}|u_t|^2=|\nabla_t u_t|^2-\frac{1}{2\epsilon^2}(1-|u_t|^2)|u_t|^2, 
\end{equation}
from which it follows that 
\begin{equation} \label{Wboch}
	-(\de_t+d^*d)\frac{1-|u_t|^2}{\epsilon}=\frac{|u_t|^2}{\epsilon^2}\left(\frac{1-|u_t|^2}{\epsilon} \right)-\frac{2}{\epsilon}|\nabla_t u_t|^2. 
\end{equation}
Notice that from \cref{u.boch} and maximum principle it follows that $\vert u_t \vert \leq 1$, provided $\vert u_0 \vert \leq 1$. For the Dirichlet term $|\nabla_tu_t|^2$, we have 
\begin{align} \label{NablaSquared.boch}\begin{aligned}
		&-(\de_t+dd^*)\frac{1}{2}|\nabla_t u_t|^2 \\
		&=|\nabla_t^2u_t|^2+\frac{3|u_t|^2-1}{2\epsilon^2}|\nabla_t u_t|^2-2\ang{\omega_t,\psi(u_t,\nabla_t)}+\mathcal{R}_1(\nabla_t u_t,\nabla_t u_t), 
\end{aligned}\end{align}
where at $p \in M$ we have $\mathcal{R}_1(\nabla u, \nabla u) = \Ric(e_i, e_j) \langle \nabla_{e_i} u, \nabla_{e_j} u \rangle $.

\subsection{Brakke flows and enhanced motions}\label{vari.prelim}
Next, we review the concepts of Brakke flow and generalized Brakke flow as we will use them; here and elsewhere in the paper, we assume some familiarity with the theory of varifolds, as presented in, e.g., \cite{SimonGMT}.

In the proof of \cref{thmBrakke}, we will be adopting Ilmanen's definition of Brakke flow, which differs slightly from Brakke's original one (see \cite[Section~6]{EllipticReg}). Let $\phi \in C^2_c(M, \mathbb{R}_{\geq 0})$, and let $\mu$ be the weight measure associated to a $k$-rectifiable varifold $V_\mu$, with first variation $\delta V_\mu$ and total variation measure $\vert \delta V_\mu \vert$. If $\vert \delta V_\mu \vert$ is a Radon measure, we denote by $\vert \delta V_\mu \vert_{\mathrm{sing}}$ its singular part with respect to $\mu$, i.e., the restriction $\vert \delta V_\mu \vert \mres \{d \vert \delta V_\mu \vert/d\mu = \infty\}$, and let $-\vec{H}_\mu:=d(\delta V_\mu)/d\mu$ be the density of $\delta V_\mu$ with respect to $\mu$. Whenever $\mu$ and $\phi$ satisfy the four conditions
\begin{enumerate}[(i)]
	\item $\mu \mres \{\phi > 0 \}$ is a $k$-rectifiable Radon measure; 
	\item $\vert \delta V_\mu \vert \mres \{\phi > 0\}$ is a Radon measure;
	\item $\vert \delta V_\mu \vert_{\mathrm{sing}} \mres \{\phi > 0\} = 0$; 
	\item $\int \phi |\vec H_\mu|^2 < \infty$; 
\end{enumerate}
we define the operator 
\begin{equation*}
	\mathcal{B}(\mu, \phi) := \int_M (- \phi(x) |\vec H_{\mu}|^2 + \langle d \phi(x),  (T_x^{\perp} \mu) \vec{H}_\mu(x) \rangle) \, d\mu(x),  
\end{equation*}
where $T_x\mu$ denotes the projection onto the approximate tangent plane of $\mu$ at $x$, and $(T_x^{\perp}\mu)$ the orthogonal one. 
If any of the above assumptions fails, set $\mathcal{B}(\mu, \phi) := - \infty$. We will drop the subscript $\mu$ when it is clear from context. We are now able to give the definition of a Brakke flow. 

\begin{definition}\label{brakke.def}
	A family of $k$-rectifiable Radon measures $(\mu_t)_{t \geq 0}$ defines a \emph{$k$-Brakke flow} if for each $\phi \in C^2_c(M, \mathbb{R}_{\geq 0})$ and each $t \geq 0$ there holds
	\begin{equation}\label{brakke.ineq}
		\overline{D}_t \mu_t(\phi) \leq \mathcal{B}(\mu_t, \phi), 
	\end{equation}
	where $\overline{D}_t$ denotes the upper derivative at time $t$, i.e.,
	\begin{equation*}
		\overline{D}_t f := \limsup_{s \to t} \frac{f(s) - f(t)}{s - t}. 
	\end{equation*} 
\end{definition}
In the present paper we will use the following alternative definition:
for each $0\le s<t$ and $\phi \in C^2(M, \mathbb{R}_{\geq 0})$, we require that
\begin{equation}\label{brakke.ineq.bis}
	\mu_t(\phi) - \mu_s(\phi) \leq \int_{s}^{t} \mathcal{B}(\mu_{\tau}, \phi) \, d\tau
\end{equation}
and
\begin{equation}\label{brakke.sum}
    \int_0^t|\mathcal{B}(\mu_{\tau}, \phi)|\,d\tau<\infty.
\end{equation}
The latter definition implies \cref{brakke.ineq} for integral Brakke flows \cite{Lahiri}.

Though we will not need it in our proof of \cref{thmBrakke}, we note that Ambrosio and Soner have also extended the notion of Brakke flow from families of varifolds to families of \emph{generalized $k$-varifolds}, i.e., Radon measures on the bundle
\begin{equation*}
	A_{n, k}(M) := \{S \in \Sym(TM) : - ng \leq S \leq g, \, \tr(S) \geq k \},
\end{equation*} 
and some of our arguments---e.g., concerning rectifiability---admit alternative approaches by invoking the machinery of generalized Brakke flows laid out in \cite{AmbrosioSoner}. While we opt to avoid generalized Brakke flows in the proof of \cref{thmBrakke} to keep the analysis relatively self-contained, we do employ the theory of generalized varifolds at a few points in the proof, to simplify some arguments.

We next recall the concept of ``enhanced flow" introduced by Ilmanen in \cite{EllipticReg}, giving extra structure to the measure-theoretic Brakke flows; here we assume some familiarity with the theory of currents, as treated in \cite{SimonGMT} or \cite{Federer}. 
For an integral current $T \in \mathbf{I}_{\text{loc}}^{k + 1}( M\times[0, \infty))$, and for a Borel set $B \subseteq [0, \infty)$, we will denote by $T_B$ the current obtained by restricting $T$ to $ M\times B$, i.e., $T \mres ( M\times B )$; moreover, for $t\in [0,\infty)$, we will denote by $T_t\in {\bf I}^k_{\text{loc}}(M)$ the slice at time $t$, viewed as a $k$-current on $M$, so that
$$T_t=(-1)^{n+1}\pi_*[\partial (T_{[t,\infty)})].$$
\begin{definition} \label{defEnh}
	Consider an integral $k$-current $T_0 \in \mathbf{I}^{k}(M)$ with $\partial T_0=0$. We say that $(T, (\mu_t)_{t \geq 0})$ is an \emph{enhanced motion} with initial condition $T_0$ if the following hold:
	\begin{enumerate}[(i)]
		\item we have $T \in \mathbf{I}_{\text{loc}}^{k + 1}( M\times[0, \infty))$ and $(-1)^{n+1}\partial T =  T_0\times\{0\}$; 
		\item the measure $B\mapsto\mathbb{M}(T_B)$ for sets $B\subseteq \mathbb{R}$ is locally finite and absolutely continuous with respect to the Lebesgue measure $\mathcal{L}^1$;  
		\item the measures $(\mu_t)_{t \geq 0}$ define a Brakke flow; 
		\item at time $0$, $|T_0|=\mu_{0}$; 
		\item we have $|T_t| \leq \mu_t$ for each $t \geq 0 $. 
	\end{enumerate}
	In the above, $T$ is called the \textit{undercurrent} and $(\mu_t)_{t \geq 0}$ is the \textit{overflow}. 
\end{definition}

\begin{remark}
	Note that, because of the inequality $|T_t|\le\mu_t$, mass cannot arbitrarily disappear, even though there might be sudden mass loss in some cases. Also, the discrepancy between the two measures $\mu_t$ and $|T_t|$ implies that an enhanced motion is not necessarily an enhanced motion with respect to later starting times. 
\end{remark}

\begin{remark}
	Note that the slices $T_t\in {\bf I}^k$ must lie in the same homology class in $H_k(M;\mathbb{Z})$ for all $t\in [0,\infty)$, and as a consequence we see that an enhanced motion with homologically nontrivial initial data $T_0$ never vanishes.
\end{remark}

\section{Discrepancy bounds and Huisken monotonicity formula} \label{secEpsIneq}

Following \cite[Section~4]{PigatiStern}, we define the stress-energy tensor
associated to a pair $(u_t, \nabla_t)$ of solutions of \cref{gf} by
$$\mathcal{T}^{\epsilon}_t:=e^{\epsilon}(u_t,\nabla_t)g-2\nabla_t u_t^*\nabla_t u_t-2\epsilon^2\omega_t^*\omega_t,$$ 
where we let $$(\nabla_t u_t^*\nabla_t u_t)(e_i,e_j):=\ang{(\nabla_t)_{e_i}u_t,(\nabla_t)_{e_j}u_t}$$
and $$\omega_t^*\omega_t(e_i,e_j):=\sum_{k=1}^n\omega_t(e_i,e_k)\omega_t(e_j,e_k),$$ with respect to a local orthonormal basis $(e_i)_{i=1}^n$ of $TM$. 
As in \cite[Section~6]{gammaConv}, we record the identity
\begin{align}\label{div.id}
	\begin{aligned}
		\operatorname{div}(\mathcal{T}^{\epsilon}_t)&=2\langle \nabla_t u_t,\nabla_t^*\nabla_t u_t\rangle+d\frac{W(u_t)}{\epsilon^2}
		+2\omega_t(\langle iu_t,\nabla_t u_t\rangle,\cdot)-2\epsilon^2\omega_t(d^*\omega_t,\cdot)\\
		&=-2\langle \nabla_t u_t,\dot{u}_t\rangle-2\epsilon^2 \omega_t(\cdot,\dot{\alpha}_t),
	\end{aligned}
\end{align}
where the divergence is understood as $\sum_{i=1}^n (D_{e_i}\mathcal{T}^{\epsilon}_t)(e_i, \cdot)$, for $D$ the Levi-Civita connection on $M$, and the second equality follows from the gradient flow equations \cref{gf}.
Also, for $\phi \in C^1(M)$ we have
\begin{align} \label{epsBrakke}
	\begin{aligned}
		&\frac{d}{dt} \int_{M} \phi e^\epsilon(u_t, \nabla_t) \\
		&= \int_M \phi\de_t e^\epsilon(u_t, \nabla_t) \\
		&= \int_M \phi[\ang{\nabla_t\dot u_t-i\dot\alpha_t u_t,\nabla_t u_t}+2\epsilon^2\ang{d\dot\alpha_t,\omega_t}
		-\epsilon^{-2}(1-|u_t|^2)\ang{\dot u_t,u_t}] \\
		&= -2\int_M [\phi (|\dot u_t|^2+\epsilon^2|\dot\alpha_t|^2)+\ang{\nabla_t u_t(d\phi),\dot u_t}+\epsilon^2\omega_t(d\phi,\dot\alpha_t)] \\
		&= \int_M [- 2 \phi (|\dot u_t|^2+\epsilon^2|\dot\alpha_t|^2) + \divergence(\mathcal{T}^\epsilon_t)(d\phi)],
	\end{aligned}
\end{align}
where we used \cref{gf} in the penultimate equality and \cref{div.id} in the last one.
This can be seen as an $\epsilon$-version of Brakke's inequality: the first term on the right-hand side corresponds to the so-called \emph{shrinking term}, whereas the second one corresponds to the \emph{transport term} (cf.\ \cite[Section~2]{Ilmanen} for the analogous step in the Allen--Cahn setting).

\subsection{Coarse bounds for discrepancy and energy density} 
As discussed in \cite[Section~6]{gammaConv}, the key to obtaining a Huisken-type monotonicity result for the system \cref{gf} is obtaining suitable bounds on the ``discrepancy"
\begin{equation}\label{discr}
	\xi_t:=\epsilon|\omega_t|-\frac{1-|u_t|^2}{2\epsilon}.
\end{equation}
between (the square roots of) the curvature and potential components of the energy density $e^{\epsilon}(u_t,\nabla_t)$, analogous to the role of the discrepancy $\frac{\epsilon}{2}|du|^2-\frac{W(u)}{\epsilon}$ between Dirichlet and potential terms in the Allen--Cahn setting. In what follows, we recall the coarse bounds for $\xi_t$ obtained in \cite[Section~6]{gammaConv}, before improving these to a uniform upper bound in \cref{better.disc.bd} below.

Let $K(t,x,y)$ be the heat kernel of $M$, defined for $t>0$ and $x,y\in M$, so that
$$-(\partial_t+d_x^*d_x)K(t,x,y)=0, \quad \lim_{t\to 0}K(t,\cdot,y)=\delta_y,$$
and recall the following asymptotics for the heat kernel on a compact Riemannian manifold.

\begin{proposition}\label{asympHK}
	Let $\Omega:=\{(x,y)\in M\times M:d(x,y)<\mz\operatorname{inj}(M)\}$.
	There exists a function $v_0:\Omega\to(0,\infty)$ with $v_0(x,x)=1$ and such that
	$$ (4\pi t)^{n/2}e^{d(x,y)^2/(4t)}K(t,x,y)\to v_0(x,y) $$
	uniformly on $\Omega$, as $t\to 0^+$.
	We also have the bound
	$$ |d_x K(t,x,y)|\le C(M)\left(\frac{1}{\sqrt{t}}+\frac{d(x,y)}{t}\right)K(t,x,y) $$
	for $0<t\le 1$.
\end{proposition}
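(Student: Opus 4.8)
The statement is classical, and I would either quote it from standard treatments of the heat kernel (the parametrix construction in the books of Chavel or Rosenberg, or in Berline--Getzler--Vergne, for the first assertion; Li--Yau and Hamilton for the gradient bound) or argue as follows. For $(x,y)$ inside the injectivity radius write $r=d(x,y)$ and let $\theta(x,y)>0$ be the Riemannian volume density of $g$ in geodesic normal coordinates centered at $x$, so that $\theta(x,x)=1$. Setting $v_0:=\theta^{-1/2}$ and defining $v_1,v_2,\dots$ recursively by the usual transport ODEs along the minimizing geodesic from $x$ to $y$ (the $j$-th one having $\Delta_x v_{j-1}$ as source), one obtains smooth functions on $\{r<\operatorname{inj}(M)\}$; fixing a cutoff $\chi(r)$ equal to $1$ on a neighborhood of $\overline{\Omega}$ and supported in $\{r<\operatorname{inj}(M)\}$, the approximate kernel
\[
H_N(t,x,y):=\chi(r)\,(4\pi t)^{-n/2}e^{-r^2/4t}\sum_{j=0}^N v_j(x,y)\,t^j
\]
solves the heat equation up to an error whose product with $(4\pi t)^{n/2}e^{r^2/4t}$ is $O(t^N)$ uniformly on $\overline{\Omega}$ (plus an $O(e^{-c/t})$ term supported away from the diagonal, coming from $\chi$). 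By Duhamel's formula $R_N:=K-H_N$ is the space-time convolution of $K$ against this error, and the standard Levi iteration --- tracking the Gaussian factors, which reproduce themselves under this convolution --- gives, for $0<t\le1$,
\[
(4\pi t)^{n/2}e^{r^2/4t}\,|R_N(t,x,y)|\le Ct^{N+1},\qquad (4\pi t)^{n/2}e^{r^2/4t}\,|d_xR_N(t,x,y)|\le Ct^{N+1/2}
\]
uniformly on $\overline{\Omega}$. Hence $(4\pi t)^{n/2}e^{r^2/4t}K(t,x,y)=\sum_{j=0}^N v_j(x,y)\,t^j+O(t^{N+1})\to v_0(x,y)$ uniformly on $\Omega$, and $v_0(x,x)=\theta(x,x)^{-1/2}=1$, which is the first assertion.

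For the gradient bound I would combine two ingredients. First, near the diagonal one differentiates the parametrix: since $|d_x r|=1$, one has $d_xH_N=-\frac{r\,d_xr}{2t}H_N+(4\pi t)^{-n/2}e^{-r^2/4t}\sum_j(d_xv_j)t^j$, so $|d_xH_N|\le C\bigl(\tfrac{d(x,y)}{t}+1\bigr)H_N$, and since $1\le t^{-1/2}$ for $t\le1$ this gives the desired estimate for $H_N$, hence for $K$ (absorbing the lower-order term $d_xR_N$). Second, for arbitrary $(x,y)$ --- in particular when $x$ and $y$ are far apart, where no parametrix is available --- I would invoke Hamilton's gradient estimate for the positive solution $K(\cdot,\cdot,y)$ of the heat equation on the closed manifold $M$, applied on a slab $M\times[t/2,t]$ with $A:=\sup_{M\times[t/2,t]}K\le Ct^{-n/2}$, namely
\[
t\,|\nabla_x\log K(t,x,y)|^2\le C(1+t)\log\!\bigl(A/K(t,x,y)\bigr).
\]
Together with the Gaussian lower bound $K(t,x,y)\ge c\,t^{-n/2}e^{-Cd(x,y)^2/t}$ valid on closed manifolds (a consequence of the Li--Yau Harnack inequality), this yields $\log(A/K)\le C\bigl(1+d(x,y)^2/t\bigr)$, whence $|\nabla_x\log K(t,x,y)|^2\le \tfrac{C}{t}+\tfrac{Cd(x,y)^2}{t^2}$ for $0<t\le1$, which is the claimed bound after multiplying by $K$.

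The one genuinely delicate point is the uniformity up to the diagonal \emph{with the sharp Gaussian normalization}: a parametrix-remainder bound of the form $e^{-r^2/(Ct)}$ with $C>4$ is useless here, since multiplication by $e^{r^2/4t}$ would then blow up as $t\to0$; one must follow the constant $4$ carefully through the Levi iteration, or --- more robustly --- recast the heat equation as a parabolic equation for the reduced kernel $(4\pi t)^{n/2}e^{r^2/4t}K$ in normal coordinates, for which the transport equations are precisely the statement that $\sum_j v_j t^j$ is the formal solution, and estimate the remainder by a parabolic maximum-principle argument. Everything else --- Hamilton's estimate, the Li--Yau Harnack inequality and the resulting Gaussian lower bound, and the on-diagonal bound $K\le Ct^{-n/2}$ --- is classical for closed Riemannian manifolds.
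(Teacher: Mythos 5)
The paper does not prove this proposition: immediately after the statement it simply cites Kannai for the first assertion and Hamilton's Corollary~1.3 for the gradient bound, so the comparison is with those references rather than with an in-paper argument. Your sketch is correct and runs along essentially the same two tracks. For the first assertion you reconstruct the Minakshisundaram--Pleijel parametrix and, importantly, you identify the genuine subtlety: the remainder must be controlled with the sharp constant $4$ in the Gaussian, since otherwise multiplication by $e^{r^2/(4t)}$ destroys the estimate as $t\to0^+$. Your proposed workaround --- writing the parabolic equation satisfied by the reduced kernel $(4\pi t)^{n/2}e^{r^2/(4t)}K$ in normal coordinates and running a maximum-principle argument --- is a legitimate way to obtain this, and it is in the spirit of what Kannai's treatment actually delivers. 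For the gradient bound, your combination of Hamilton's logarithmic gradient estimate with the on-diagonal upper bound $A\le Ct^{-n/2}$ and the Li--Yau Gaussian lower bound gives $\log(A/K)\le C(1+d(x,y)^2/t)$ and hence $|\nabla_x\log K|\le C(t^{-1/2}+d(x,y)/t)$ for $0<t\le1$, which is exactly the ``more elementary'' route the paper points to via Hamilton. One small redundancy: the near-diagonal parametrix-differentiation argument for the gradient bound is unnecessary, since the Hamilton plus Li--Yau route already gives the estimate uniformly on all of $M\times M$ for $0<t\le 1$, with no separate near-diagonal case required.
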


A proof of the first part of the statement is given in \cite{Kannai}.
While the second part is also contained in \cite{Kannai}, it follows from the more elementary bound shown in \cite[Corollary~1.3]{Hamilton}.

As in \cite[Section~6]{gammaConv}, define
\begin{equation}\label{varphi}
	\varphi_t(x)=\varphi(x,t):=\int_MK(t,x,y)|\xi_0|(y)\,dy,
\end{equation}
which solves the heat equation $- (\partial_t + d^*d)\varphi = 0$ with initial condition $\varphi(0, x) = \vert \xi_0(x) \vert$, and set
\begin{equation}\label{psi}
	\psi_t(x)=\psi(x,t):=\int_0^t\int_M K(t-s,x,y)\frac{C_0}{2\epsilon}(1-|u_s|^2)(y)\,dy\,ds,
\end{equation}
which solves the inhomogeneous heat equation $-(\partial_t + d^*d)\psi=-\frac{C_0}{2\epsilon}(1-|u_t|^2)$ with zero initial condition. For a suitable choice of $C_0=C_0(M)$, it is shown in \cite[Section~6.1]{gammaConv}
that $\xi_t$ obeys the bounds
\begin{equation}\label{xi.psi}
	\xi_t\le e^{Ct}(\varphi_t+\psi_t)
\end{equation}
and
\begin{equation} \label{crudeDiscrepancy}
	\xi_t\leq Ce^{Ct}\left(\varphi_t+\epsilon^{\frac{1}{n-1}}+\epsilon^{\frac{2}{n-1}}\frac{1-|u_t|^2}{\epsilon}\right),
\end{equation}
for a constant $C=C(M,\Lambda)$ depending only on $M$ and an initial energy bound $E_{\epsilon}(u_t,\nabla_t)\leq\Lambda$.

In \cite[Section~6]{gammaConv}, we used these estimates to derive a coarse Huisken-type monotonicity formula, with the simple goal of ruling out energy concentration at a point for solutions of \cref{gf} at positive times; in particular, in \cite[Section~6.2]{gammaConv} we proved the following.

\begin{proposition}\cite[Proposition~6.3]{gammaConv}\label{ener.bd}
	Given $\Lambda,t_0>0$, for a solution $(u_t,\nabla_t)$ of the gradient flow equations with initial energy $E_\epsilon(u_0,\nabla_0)\le\Lambda$ we have
	$$ \int_{B_r(y)}e^\epsilon(u_t,\nabla_t)\le C(M,\Lambda,t_0)r^{n-2} $$
	for all $y\in M$, $r>\epsilon$, and $t\ge t_0>0$.
\end{proposition}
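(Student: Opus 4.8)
The plan is to run Huisken's monotonicity argument in the rough form permitted by the discrepancy estimates \cref{xi.psi}--\cref{crudeDiscrepancy}. Fix $y\in M$, a reference time $t_1\ge t_0$, and recall Huisken's backward heat kernel $\rho_{(y,t_1)}(x,t)=(4\pi(t_1-t))^{-(n-2)/2}\exp\big(-\tfrac{d(x,y)^2}{4(t_1-t)}\big)$, which on a manifold one must correct by the asymptotic factor $v_0(x,y)$ from \cref{asympHK} and cut off away from the injectivity radius; write $\Phi_{(y,t_1)}$ for the resulting weight. First I would compute $\frac{d}{dt}\int_M \Phi_{(y,t_1)}\,d\mu_t^\epsilon$ using the $\epsilon$-Brakke identity \cref{epsBrakke} with $\phi=\Phi_{(y,t_1)}(\cdot,t)$, together with the divergence identity \cref{div.id} for the stress-energy tensor $\mathcal{T}^\epsilon_t$. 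The point of Huisken's weight is that the transport term $\divergence(\mathcal{T}^\epsilon_t)(d\Phi)$ combines with $\partial_t\Phi$ to produce, up to the discrepancy terms and lower-order curvature errors coming from the manifold geometry, a manifestly nonpositive "squared mean curvature" expression — here playing the role of $-\big|\vec H + \tfrac{(x-y)^\perp}{2(t_1-t)}\big|^2$ in the smooth setting. The leftover error is precisely controlled by $\int \Phi\,|\xi_t|\,\big(|\omega_t|+\tfrac{1-|u_t|^2}{2\epsilon}\big)\lesssim \int\Phi\,|\xi_t|\,e^\epsilon(u_t,\nabla_t)^{1/2}\cdots$; after Cauchy--Schwarz this is absorbed into the good term plus a controllable remainder, giving an \emph{almost-monotonicity}: $e^{Ct}\int_M\Phi_{(y,t_1)}\,d\mu_t^\epsilon$ is bounded by its value at any earlier time, plus an error that tends to $0$ as one uses the decay of $\varphi_t,\psi_t$.

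The second step is to estimate the reference value of the monotone quantity at a fixed earlier time. Here I use the global energy bound $E_\epsilon(u_s,\nabla_s)\le\Lambda$: at time $s=t_0/2$ (say), $\int_M\Phi_{(y,t_1)}(\cdot,t_0/2)\,d\mu^\epsilon_{t_0/2}$ is bounded by a constant times $\Lambda$ times the sup of the (bounded) weight, uniformly for $t_1\in[t_0,t_0+1]$ and $y\in M$, since $t_1-t_0/2\ge t_0/2$ keeps the Gaussian weight bounded. Combined with the almost-monotonicity from Step 1, this yields a uniform bound
$$\int_M \Phi_{(y,t_1)}(x,t)\,d\mu_t^\epsilon(x)\le C(M,\Lambda,t_0)$$
for all $t\in[t_0/2,t_1]$, all $y$, and all $t_1\in[t_0,t_0+1]$; iterating over consecutive unit time-intervals (using that energy only decreases) extends this to all $t_1\ge t_0$.

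The third step is the standard deduction of the density bound from the monotonicity bound: for $r>\epsilon$ and $t\ge t_0$, pick $t_1=t+r^2$ and restrict the integral $\int_M\Phi_{(y,t_1)}(\cdot,t)\,d\mu_t^\epsilon$ to the ball $B_r(y)$, where $\Phi_{(y,t_1)}(\cdot,t)\ge c(M)\,(4\pi r^2)^{-(n-2)/2}$; this immediately gives $\mu_t^\epsilon(B_r(y))\le C(M,\Lambda,t_0)r^{n-2}$, provided $r$ is smaller than a geometric constant (for larger $r$ the bound is trivial from the total energy $\Lambda$, enlarging the constant). The constraint $r>\epsilon$ is exactly what makes the error terms in the discrepancy estimates — which carry positive powers of $\epsilon/r$ or $\epsilon$ — harmless at scale $r$.

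The main obstacle is Step 1: carefully tracking the manifold error terms (from $v_0$, from commuting $d^*d$ with the weight, from the Weitzenböck curvature contributions implicit in \cref{div.id}) and, more seriously, showing that the discrepancy error $\int\Phi\,|\xi_t|(\cdot)$ really is absorbable. This is where \cref{xi.psi} and \cref{crudeDiscrepancy} enter: one must bound $\int_M\Phi_{(y,t_1)}\varphi_t$ and $\int_M\Phi_{(y,t_1)}\psi_t$ using heat-kernel estimates (\cref{asympHK}) and the energy bound on $\int_M(1-|u_s|^2)^2/\epsilon^2$, checking that these contributions are $O(r^{n-2})$ at scale $r>\epsilon$ — exactly the bookkeeping carried out in \cite[Section~6.2]{gammaConv}, which is why the proposition is quoted verbatim from there rather than reproved here.
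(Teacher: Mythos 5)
The paper does not prove this proposition; it is quoted directly from \cite[Proposition~6.3]{gammaConv}, and the only additional argument given here is the remark just below the statement handling $r\ge 1$ (via the total energy bound $E_\epsilon\le\Lambda$) and $t_0<2$ (via parabolic rescaling of the $t_0=2$ case). Your sketch of the companion paper's argument --- a coarse Huisken almost-monotonicity derived from \cref{epsBrakke} and \cref{div.id} together with the coarse discrepancy bounds \cref{xi.psi}, \cref{crudeDiscrepancy}, a reference bound at an earlier fixed time from the global energy bound, and the standard deduction of the density bound at scale $r>\epsilon$ --- is structurally accurate and matches the route the surrounding text indicates. One small imprecision in Step~1: you suggest the discrepancy error $\int\Phi\,|\xi_t|\,\sqrt{e^\epsilon}$ is absorbed into the negative squared-mean-curvature-type term by Cauchy--Schwarz, but $\sqrt{e^\epsilon}$ has no useful comparison with that term. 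As one reads off from the sharpened version \cref{mono.prop} of the same computation presented in this paper (and \cref{mono.quantitative}), the negative term is simply dropped, and the discrepancy error is handled by a Gr\"onwall argument on the differential inequality for the weighted energy $\Phi_h(s)$, contributing a term of size $\sim C(T-s)^{-1}\Phi_h(s)^{1/2}$ plus lower-order manifold corrections. This is a presentational correction rather than a gap; your overall reconstruction is sound.
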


\begin{remark}
	As written, \cite[Proposition~6.3]{gammaConv} applies to radii $\epsilon<r\leq 1$ and times $t\ge 2$, but the statement for radii $r\geq 1$ follows immediately from the bound $E_{\epsilon}(u_t,\nabla_t)\leq \Lambda$, and the extension to times $0<t_0<2$ (with constants depending on $t_0$) is similarly trivial (for instance, it can be obtained from the $t_0=2$ case via parabolic rescaling).
\end{remark}

\subsection{Refined bounds and Huisken-type monotonicity formula}\label{better.disc.bd}
Using \cref{ener.bd}, we now improve \cref{crudeDiscrepancy} to a uniform upper bound for $\xi_t=\epsilon|\omega_t|-\frac{1-|u_t|^2}{2\epsilon}$.
\begin{proposition}\label{disc.refined}
	Given $\Lambda,t_0>0$, for a solution $(u_t,\nabla_t)$ of the gradient flow equations with initial energy $E_\epsilon(u_0,\nabla_0)\le\Lambda$ we have
	$$ \xi_t\le C(M,\Lambda,t_0) $$
	for all times $t\ge t_0>0$.
\end{proposition}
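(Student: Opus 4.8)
The plan is to upgrade the crude discrepancy bound \cref{crudeDiscrepancy} by showing that the two ``error terms'' $\varphi_t$ and $\psi_t$ in \cref{xi.psi} are both bounded by a constant depending only on $M,\Lambda,t_0$, once we are at a positive time $t\ge t_0$. The key new input available to us now, which was not used in \cite{gammaConv}, is the uniform energy density bound $\int_{B_r(y)}e^\epsilon(u_t,\nabla_t)\le C r^{n-2}$ from \cref{ener.bd}. Since $|\xi_0|\le \epsilon|\omega_0| + \frac{1-|u_0|^2}{2\epsilon}\le\epsilon|\omega_0|+\epsilon^{-1}W(u_0)^{1/2}\cdot 2$ is controlled pointwise by the energy density $e^\epsilon(u_0,\nabla_0)$ (via $2ab\le a^2+b^2$ we get $|\xi_0|^2\lesssim e^\epsilon$, but crucially $\int |\xi_0| \le \Lambda^{1/2}\vol(M)^{1/2}$ is only an $L^1$-type bound at $t=0$), the delicate point is that a bound on $\int_M|\xi_0|$ alone is not enough to bound the heat extension $\varphi_t$ pointwise for small $t$. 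This is where the Ahlfors-regularity-type bound enters.

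\textbf{Step 1: Pointwise bound on $\varphi_t$.} First I would observe that, by \cref{ener.bd} applied at time $t/2$ (say, taking $t_0\le t/2$, or just rescaling), the measure $e^\epsilon(u_{t/2},\nabla_{t/2})\,\dvol_g$ satisfies $\mu(B_r(y))\le Cr^{n-2}$ for all $r>\epsilon$. Together with the pointwise bound $|\xi_{t/2}|^2\le C\,e^\epsilon(u_{t/2},\nabla_{t/2})$ and $|u|\le 1$, and the trivial bound $|\xi_{t/2}|\le C\epsilon^{-1}$ on the ``bad set'', one controls $\int_{B_r(y)}|\xi_{t/2}| $ by splitting into the region where $e^\epsilon$ is large (measure $\lesssim \epsilon^2 r^{n-2}$ by Chebyshev, contributing $\lesssim \epsilon r^{n-2}$) and the region where $e^\epsilon\le\epsilon^{-1}$ (contributing $\lesssim \epsilon^{-1/2}\cdot (r^{n-2})^{1/2}\cdot\dots$)—in any case one gets $\int_{B_r(y)}|\xi_{t/2}|\le C\,r^{n-2}$ for $r\ge\epsilon$, and for $r\le\epsilon$ the bound $\int_{B_r(y)}|\xi_{t/2}|\le C\epsilon^{-1}r^n\le C r^{n-1}\le C r^{n-2}$. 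Then, using the Gaussian upper bound for the heat kernel from \cref{asympHK} and shifting the time origin to $t/2$, write $\varphi_t(x)=\int_M K(t/2,x,y)|\xi_{t/2}|(y)\,dy$ (by the semigroup property, since $\varphi$ solves the homogeneous heat equation) and estimate this by the standard dyadic decomposition of $M$ into annuli $B_{2^{k+1}\sqrt{t/2}}(x)\setminus B_{2^k\sqrt{t/2}}(x)$, on each of which $K\lesssim (4\pi t/2)^{-n/2}e^{-4^k/4}$; summing the geometric-times-polynomial series against the measure bounds $\int_{B_r}|\xi_{t/2}|\le Cr^{n-2}$ yields $\varphi_t(x)\le C(M,\Lambda)\,t^{-1}$ — and since $t\ge t_0$ this is $\le C(M,\Lambda,t_0)$.

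\textbf{Step 2: Pointwise bound on the other terms.} The remaining two terms in \cref{crudeDiscrepancy} are already harmless: $\epsilon^{1/(n-1)}\le 1$, and $\epsilon^{2/(n-1)}\frac{1-|u_t|^2}{\epsilon}\le \epsilon^{2/(n-1)-1}$ which tends to $0$ for $n\ge 3$ (indeed $2/(n-1)-1\ge 0$ iff $n\le 3$, so for $n=3$ it is bounded by $1$ and for $n\ge 4$ it tends to $0$). Hence, plugging Step 1 into \cref{crudeDiscrepancy} gives $\xi_t\le C e^{Ct}(\varphi_t + 1 + 1)\le C(M,\Lambda,t_0)$ for all $t\ge t_0$, since $t$ is also bounded above when we only care about a window $[t_0,T]$—or, to get the bound uniform in $t\in[t_0,\infty)$, one instead re-runs the estimate with the time-origin shifted to $t-1$ (for $t\ge t_0+1$) so that the $e^{Ct}$ factor becomes $e^{C}$, combining with the heat-kernel estimates on the unit time interval and the uniform-in-$t$ density bound of \cref{ener.bd}.

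\textbf{Main obstacle.} The crux is Step 1: converting the merely-$L^1$ control of $|\xi_0|$ (or $|\xi_{t/2}|$) into a \emph{pointwise} bound on its heat extension at a fixed positive time. The naive estimate $\varphi_t(x)\le \|K(t,x,\cdot)\|_\infty\|\xi_0\|_{L^1}\lesssim t^{-n/2}$ blows up as $t\to 0$ but is fine for $t\ge t_0$ \emph{if} we only ask for a bound depending on $t_0$—so in fact one might not even need the Ahlfors bound for the qualitative statement as stated! However, to make the constant behave well and to be robust (and matching the later uses in \cref{decay.sec} and beyond, where scale-invariant versions are needed), the honest route is the dyadic-annulus argument using \cref{ener.bd}; the technical care needed is in the bookkeeping of the split between the ``concentrated'' and ``diffuse'' parts of $|\xi_{t/2}|$ and in verifying that the resulting series converges with the claimed power of $t$. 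A secondary subtlety is justifying the semigroup identity $\varphi_t = e^{-(t/2)d^*d}\varphi_{t/2}$ rigorously given that $\varphi$ is defined by the explicit kernel formula \cref{varphi} with a \emph{fixed} initial datum $|\xi_0|$—one should instead directly estimate $\int_M K(t,x,y)|\xi_0|(y)\,dy$ using that $|\xi_0|\le C e^\epsilon(u_0,\nabla_0)^{1/2}$ isn't quite enough, so one genuinely wants to first flow to time $t_0/2$, invoke \cref{ener.bd} there, and only then apply the kernel argument on the interval $[t_0/2, t]$, replacing $\varphi$ by the heat extension of $|\xi_{t_0/2}|$ and absorbing the difference via the maximum principle comparison \cref{xi.psi} restarted at time $t_0/2$.
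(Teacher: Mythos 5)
There is a genuine gap in Step~2, and it sinks the argument for $n\ge 4$. You write that $\epsilon^{2/(n-1)}\cdot\frac{1-|u_t|^2}{\epsilon}\le\epsilon^{2/(n-1)-1}$ ``tends to $0$ for $n\ge 3$,'' but the exponent $2/(n-1)-1$ is $\ge 0$ only for $n\le 3$ (it equals $0$ at $n=3$); for $n\ge 4$ it is strictly negative, so $\epsilon^{2/(n-1)-1}\to\infty$ as $\epsilon\to 0$. Hence \cref{crudeDiscrepancy} alone gives no uniform control in the interesting range $n\ge 4$, and plugging your Step~1 into it does not close the argument.

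The more structural misallocation of effort is that you spend the energy-density bound \cref{ener.bd} on $\varphi_t$ — which, as you yourself observe, is already bounded for $t\ge t_0$ by the trivial estimate $\varphi_t(x)\le\|K(t,x,\cdot)\|_\infty\|\xi_0\|_{L^1}\lesssim t_0^{-n/2}$, so no Ahlfors regularity is needed there at all. The actual bottleneck is the Duhamel term $\psi_t(x)=\int_0^t\int_M K(t-s,x,y)\,\frac{1-|u_s|^2}{2\epsilon}(y)\,dy\,ds$ from \cref{xi.psi}, which you should not discard in favor of \cref{crudeDiscrepancy}. The dyadic-annulus-against-Gaussian computation you set up is exactly the right tool, but it has to be applied \emph{time-slice by time-slice inside the Duhamel integral}: for each $s\in[t_0/2,t-\epsilon^2]$, use $\frac{1-|u_s|^2}{2\epsilon}\le\sqrt{e_s^\epsilon}$, Cauchy--Schwarz and \cref{ener.bd} to get $\int_{B_r(x)}\sqrt{e_s^\epsilon}\le Cr^{n-1}$ for $r>\epsilon$, feed this into the Gaussian kernel to obtain $\int_M K(t-s,x,y)\sqrt{e_s^\epsilon}(y)\,dy\le C(t-s)^{-1/2}$, and then integrate the resulting $1/\sqrt{t-s}$ over $s$. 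Combined with the crude $\frac{1}{2\epsilon}\cdot\epsilon^2$ bound on the final $[t-\epsilon^2,t]$ sliver and the heat-kernel sup-bound on $[0,t_0/2]$, this controls $\psi_t$ uniformly — which is precisely how the paper's proof proceeds. So the right idea is present in your Step~1, but it is pointed at the wrong term, and the substitute you use for the hard term (\cref{crudeDiscrepancy}) fails in dimensions $n\ge 4$.
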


\begin{proof}
	Without loss of generality, we can assume that $t_0\le t\le 1$, since the case $t>1$ follows from this one by considering the translated family $(u_{\tau+s},\nabla_{\tau+s})_{s\geq 0}$ with initial data $(u_{\tau},\nabla_{\tau})$, where $\tau:=t-1$.
	
	Using \cref{xi.psi}, we have
	$$ \xi_t(x)\le C(M,\Lambda,t_0)+C(M,\Lambda)\int_0^t\int_M K(t-s,x,y)\frac{1-|u_s|^2(y)}{2\epsilon}\,dy\, ds. $$
	For $s\in[0,t_0/2]$, using \cref{asympHK}, we can bound
	$$K(t-s,x,y)\le C(M)(t-s)^{-n/2}\le C(M)t_0^{-n/2},$$
	so that
	$$ \int_0^{t_0/2}\int_M K(t-s,x,y)\frac{1-|u_s|^2(y)}{2\epsilon}\,dy\, ds
	\le \frac{C(M)}{t_0^{n/2}}\int_0^{t_0/2}\left(\int_M\sqrt{e_s^\epsilon}\right)\,ds
	\le \frac{C(M)\sqrt{\Lambda}}{t_0^{n/2-1}}, $$
	where we let $e_s^\epsilon=e^{\epsilon}(u_s,\nabla_s).$
	For the remaining times $s\in[t_0/2,t]$, let $\rho(s):=\sqrt{t-s}$ and recall that \cref{ener.bd}
	gives
	$$ \int_{B_r(x)}\sqrt{e_s^\epsilon}(y)\,dy\le\operatorname{vol}_g(B_r(x))^{1/2}\left(\int_{B_r(x)}e_s^\epsilon(y)\,dy\right)^{1/2}\le Cr^{n-1} $$
	for some constant $C=C(M,\Lambda,t_0)$, provided that $r>\epsilon$.
	For these times, the previous inner integral is then bounded by
	\begin{align*}
		\int_M K(t-s,x,y)\sqrt{e_s^\epsilon}(y)\,dy
		&\le C\int_M \rho^{-n}e^{-d(x,y)^2/(4\rho^2)}\sqrt{e_s^\epsilon}(y)\,dy \\
		&\le C\rho^{-n}\int_\rho^{\infty}\left(-\frac{d}{dr}e^{-r^2/(4\rho^2)}\right)\left(\int_{B_r(x)}\sqrt{e_s^\epsilon}(y)dy\right)\,dr \\
		&\le C\rho^{-n}\int_\rho^\infty\frac{r}{\rho^2}e^{-r^2/(4\rho^2)}r^{n-1}\,dr,
	\end{align*}
	and making the substitution $\lambda=r/\rho$ in the final integral, we see that
	\begin{align*}
		\int_M K(t-s,x,y)\sqrt{e_s^\epsilon}(y)\,dy
		&\le C\rho^{-1}\int_1^\infty \lambda^n e^{-\lambda^2/4}\,d\lambda
		= \frac{C}{\sqrt{t-s}},
	\end{align*}
	provided that $t-s=\rho(s)^2>\epsilon^2$.
	Finally, for $s\in[t-\epsilon^2,t]$ we use the trivial bound
	$$ \int_M K(t-s,x,y)\frac{1-|u_s|^2(y)}{2\epsilon}\,dy\le\frac{1}{2\epsilon}, $$
	which gives a term bounded by $C\epsilon$ when integrated over this time interval.
	Combining the preceding estimates, we see that
	$$ \xi_t\le C+C\int_{t_0/2}^t\frac{ds}{\sqrt{t-s}}\le C $$
	for some constant $C=C(M,\Lambda,t_0)$.
\end{proof}

With little additional effort, we can now show a useful pointwise bound for the full energy density.

\begin{proposition} \label{prop.nabla.u.bd}
	Given $\Lambda,t_0>0$, there exists a constant $C(M,\Lambda,t_0)$ such that
	\begin{align}\label{nabla.u.bd}
		&e^\epsilon(u_t,\nabla_t)
		\le C\frac{(1-|u_\epsilon|^2)^2}{\epsilon^2}+C
	\end{align}
	whenever $t\ge t_0>0$, for a solution $(u_t,\nabla_t)$ of \cref{gf} with $E_\epsilon(u_0,\nabla_0)\le\Lambda$.
\end{proposition}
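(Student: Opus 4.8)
The plan is to estimate the three summands of $e^\epsilon(u_t,\nabla_t)$ separately. The potential term is already in the desired form, and the Yang--Mills term is handled at once by the discrepancy bound of \cref{disc.refined}: since $\epsilon|\omega_t|=\xi_t+\frac{1-|u_t|^2}{2\epsilon}\le C+\frac{1-|u_t|^2}{2\epsilon}$ and $\epsilon|\omega_t|\ge 0$, squaring and using $2ab\le a^2+b^2$ gives $\epsilon^2|\omega_t|^2\le\frac{(1-|u_t|^2)^2}{2\epsilon^2}+C$. Thus \cref{nabla.u.bd} reduces to the pointwise Dirichlet estimate $|\nabla_t u_t|^2\le C\frac{(1-|u_t|^2)^2}{\epsilon^2}+C$ for $t\ge t_0$ (and, as usual, only the regime $\epsilon\le\epsilon_0(M,\Lambda,t_0)$ is substantive, the bounded-$\epsilon$ case being routine from uniform parabolic regularity).

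First I would establish the crude bound $|\nabla_t u_t|^2\le C\epsilon^{-2}$. Combining the Bochner identity \cref{NablaSquared.boch} with $|\psi(u_t,\nabla_t)|\le 2|\nabla_t u_t|^2$, the Ricci bound on $M$, $|u_t|\le 1$, and $|\omega_t|\le C\epsilon^{-1}$ (again from \cref{disc.refined}), one gets $(\partial_t+d^*d)|\nabla_t u_t|^2\le C\epsilon^{-2}|\nabla_t u_t|^2$; after the parabolic dilation $(y,s)\mapsto(\exp_x(\epsilon y),t+\epsilon^2 s)$ this becomes a heat subinequality with bounded zeroth-order coefficient, and since $\int_{B_{2\epsilon}(x)\times[t-\epsilon^2,t]}|\nabla_\tau u_\tau|^2\le\int e^\epsilon\le C\epsilon^n$ by \cref{ener.bd}, the parabolic mean value inequality yields $\epsilon^2|\nabla_t u_t|^2(x)\le C$. (Alternatively one can dilate by $\epsilon$ from the outset and invoke interior parabolic Schauder estimates for the resulting $\epsilon=1$ system.) This crude bound already gives \cref{nabla.u.bd} on the core region $\{|u_t|^2\le 1-\beta_0\}$ for any fixed $\beta_0>0$, since there $\frac{(1-|u_t|^2)^2}{\epsilon^2}\ge\beta_0^2\epsilon^{-2}$.

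It then remains to treat the bulk region $\{|u_t|^2>1-\beta_0\}$ for a small fixed $\beta_0$. The key point is that \cref{NablaSquared.boch} is genuinely dissipative there: since $3|u_t|^2-1\ge 2-3\beta_0>1$, absorbing the cross term via \cref{disc.refined} gives, for $\epsilon$ small, $(\partial_t+d^*d)P\le-\frac{c}{\epsilon^2}P$ with $P:=\epsilon^2|\nabla_t u_t|^2$. I would then run the maximum principle on an auxiliary quantity of the form $P-\Lambda(1-|u_t|^2)-\Lambda\epsilon^2$ over the spacetime region $\{|u_t|^2>1-\beta_0\}$, using the evolution \cref{u.boch} (equivalently \cref{Wboch}) of $1-|u_t|^2$: multiplying by a weight $e^{ct/\epsilon^2}$ turns this into a heat subsolution, so that on the lateral boundary $\{|u_t|^2=1-\beta_0\}$ the crude bound together with the choice $\Lambda\sim\beta_0^{-1}$ makes it nonpositive, while the a priori uncontrolled data on the time-slices $t\in(0,t_0)$ is killed outright by the exponential gain $e^{-ct_0/\epsilon^2}$, which beats any fixed power of $\epsilon$; this yields $|\nabla_t u_t|^2\le C\frac{1-|u_t|^2}{\epsilon^2}+C$ for $t\ge t_0$. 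The main obstacle is upgrading the linear power of $1-|u_t|^2$ to the claimed quadratic one: running the same scheme with $(1-|u_t|^2)^2$ in place of $(1-|u_t|^2)$ is not immediate, because the diffusion of $(1-|u_t|^2)^2$ produces a term comparable to $|d|u_t||^2$, which is itself part of the Dirichlet density $|\nabla_t u_t|^2$ one is trying to control—so closing the argument at the quadratic level requires a finer choice of test quantity (or a short iteration feeding the linear bound back into the estimate for $|d|u_t||^2$), and I expect this bookkeeping to be the delicate point of the proof.
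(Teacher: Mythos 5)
Your reduction to a pointwise Dirichlet estimate (after absorbing the curvature term via \cref{disc.refined}) matches the paper's, and the discussion of crude bounds is essentially fine. But you have correctly diagnosed that your route, as written, cannot close: you are left with a \emph{linear} bound $|\nabla u|^2\le C\epsilon^{-2}(1-|u|^2)+C$, whereas the statement requires the \emph{quadratic} $|\nabla u|^2\le C\epsilon^{-2}(1-|u|^2)^2+C$, and these are genuinely different in the bulk region $\{1-|u|^2\ll 1\}$. (The quadratic form is not cosmetic --- it is precisely what makes the right-hand side of \cref{u.boch} negative where $|u|$ is close to $1$ in the proof of \cref{decay}, a role the linear bound cannot play.) Your proposed escape, running the maximum principle on $P-\Lambda(1-|u|^2)^2-\Lambda\epsilon^2$, founders exactly where you say it does: $(\partial_t+d^*d)(1-|u|^2)^2$ produces a good sign from the potential term but also a bad $-2|d|u||^2$ term that cannot be absorbed.

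The fix, and the route the paper actually takes, is to compare \emph{square roots} rather than squares: run the maximum principle on $w:=|\nabla u|-\lambda\frac{1-|u|^2}{\epsilon}$, using the weak subsolution inequality $-(\partial_t+d^*d)|\nabla u|\ge\frac{3|u|^2-1}{2\epsilon^2}|\nabla u|-2|\omega||\nabla u|-C|\nabla u|$ (Kato's inequality applied to \cref{NablaSquared.boch}) together with \cref{Wboch} for $\frac{1-|u|^2}{\epsilon}$. The zeroth-order coefficients then combine cleanly --- $-(\partial_t+d^*d)w\ge\frac{|u|^2}{\epsilon^2}w+|\nabla u|\big(\frac{2\lambda}{\epsilon}|\nabla u|-\frac{1-|u|^2}{2\epsilon^2}-2|\omega|-C\big)$ --- and at a positive interior maximum the second bracket is nonpositive, yielding the linear bound $|\nabla u|\le C\epsilon^{-1}(1-|u|^2)+C$ for the \emph{first} power of $|\nabla u|$; squaring then gives the claimed quadratic bound for $|\nabla u|^2$ for free. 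Instead of your region-splitting and exponential-in-time weight, the paper handles the unknown data at $t=0$ by subtracting the heat evolutions $\varphi$ (of $|\xi_0|$) and $\tilde\varphi$ (of $|w_0|$) from $w$, obtaining a quantity $\eta$ that is nonpositive initially and uniformly bounded by the maximum principle, and then discards the heat-kernel contributions at times $t\ge t_0$ using the trivial pointwise bounds $\varphi,\tilde\varphi\le C(M,\Lambda,t_0)$. Your crude $\sup|\nabla u|\le C\epsilon^{-1}$ and your region-splitting are not needed under that scheme, though of course your crude bound is available and could serve as an alternative way to handle the lateral/initial data.
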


\begin{proof}
	Again we can assume $t \le 1$; for the remainder of the proof, for the sake of readability we drop the subscript $t$ in our notation.
	Since $\epsilon|\omega|=\frac{1-|u|^2}{2\epsilon}+\xi$ and $\xi\le C$, it is enough to obtain the bound
	$$ |\nabla u|\le C\frac{1-|u|^2}{\epsilon}+C. $$
	Let $$w:=|\nabla u|-\lambda\frac{1-|u|^2}{\epsilon},$$
	where $\lambda$ is a constant that will be chosen later. Let $\tilde\varphi_t\ge 0$ be the solution to the heat equation with $\tilde\varphi_0=|w_0|$ and let $\eta:=w-\tilde\varphi-\lambda\varphi$, where $\varphi$ is defined in \cref{varphi}. Recall now the B\"ochner formula \cref{NablaSquared.boch} for $\vert \nabla u \vert^2$; using the bound $|\psi(u,\nabla)|\le|\nabla u|^2$, whose simple proof can be found in \cite[Section~2]{PigatiStern}, we easily deduce the weak subequation
	\begin{align*}
		-(\de_t+d^*d)|\nabla u|
		&\ge\frac{3|u|^2-1}{2\epsilon^2}|\nabla u|-2|\omega||\nabla u|-C(M)|\nabla u|. 
	\end{align*}
	Using also \cref{Wboch}, we obtain
	\begin{align*}
		&-(\de_t + d^*d)\eta
		=-(\de_t + d^*d)w
		\ge\frac{|u|^2}{\epsilon^2}w+|\nabla u|\left(\frac{2\lambda}{\epsilon}|\nabla u|-\frac{1-|u|^2}{2\epsilon^2}-2|\omega|-C(M)\right).
	\end{align*}
	Since $\eta_0\le 0$, at a positive maximum for $\eta$ on $ M\times[0,1]$ we have
	$|\nabla u| \ge w\ge\eta>0$ and thus, by the maximum principle, $$\frac{2\lambda}{\epsilon}|\nabla u|\le\frac{1-|u|^2}{2\epsilon^2}+2|\omega|+C(M).$$ Assuming $\lambda\ge 1$, this implies the gradient bound 
	\begin{align*}
		|\nabla u|
		& \le \frac{1-|u|^2}{4\epsilon}+\epsilon|\omega|+C(M)\epsilon \\
		& \le \frac{1-|u|^2}{\epsilon}+\xi+C(M) \\
		& \le C_1\frac{1-|u|^2}{\epsilon}+C_1\varphi+C_1
	\end{align*}
	at this maximum point for $\eta$,
	for some $C_1(M,\Lambda)$, where the last inequality follows from \cref{crudeDiscrepancy}. Consequently, choosing $\lambda:=C_1$, at this maximum point we have
	$$\eta\le |\nabla u|-\lambda\frac{1-|u|^2}{\epsilon}-\lambda\varphi\le C,$$ 
	and therefore $\eta\le C$ on all of $ M\times[0,1]$. In particular, it follows that $$|\nabla u|\le C\frac{1-|u|^2}{\epsilon}+\tilde\varphi+C\varphi + C.$$ To conclude, we observe that pointwise bounds $\tilde{\varphi},\varphi\le C(M,\Lambda,t_0)$ for $t\ge t_0>0$ follow immediately from the definitions of $\varphi$ and $\tilde{\varphi}$, and standard propeties of the heat kernel.
\end{proof}

As an immediate consequence of the preceding proposition, we see that $\int_{B_r(y)}e^\epsilon(u_t,\nabla_t)\le C\epsilon^{-2}r^n$, so the statement of \cref{ener.bd} actually holds for all radii $r\leq \epsilon$ as well.

\begin{corollary}\label{ener.bd.bis}
	For $t\ge t_0>0$, $y\in M$, and $r>0$, we have
	$$ \int_{B_r(y)}e^\epsilon(u_t,\nabla_t)\le C(M,\Lambda,t_0)r^{n-2}. $$
\end{corollary}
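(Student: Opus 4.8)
The plan is to combine the two estimates already in hand by matching powers of $r$ at the scale $r=\epsilon$. We split into two regimes. For $r>\epsilon$, the asserted bound $\int_{B_r(y)}e^\epsilon(u_t,\nabla_t)\le C(M,\Lambda,t_0)r^{n-2}$ is precisely the content of \cref{ener.bd}, so there is nothing to prove. For $r\le\epsilon$, I would invoke the pointwise bound of \cref{prop.nabla.u.bd}: since $|u_t|\le 1$ by the maximum principle (as noted after \cref{u.boch}), we have $(1-|u_t|^2)^2\le 1$, and hence
\begin{equation*}
e^\epsilon(u_t,\nabla_t)\le C\epsilon^{-2}+C\le C'\epsilon^{-2},
\end{equation*}
where we may assume $\epsilon\le 1$ since $\epsilon\to 0$ along the relevant sequence. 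Integrating over $B_r(y)$ and using the elementary volume bound $\vol_g(B_r(y))\le C(M)r^n$ valid on the closed manifold $M$ (for $r\le \mathrm{diam}(M)$; trivially for larger $r$), this yields $\int_{B_r(y)}e^\epsilon(u_t,\nabla_t)\le C\epsilon^{-2}r^n$, which is the intermediate estimate recorded in the text preceding the statement.

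It then remains only to observe that in the regime $r\le\epsilon$ one has $r^2\le\epsilon^2$, so that $\epsilon^{-2}r^n=(\epsilon^{-2}r^2)\,r^{n-2}\le r^{n-2}$; combining this with the case $r>\epsilon$ gives the uniform bound $\int_{B_r(y)}e^\epsilon(u_t,\nabla_t)\le C(M,\Lambda,t_0)r^{n-2}$ for all $r>0$, $y\in M$, $t\ge t_0>0$. I do not anticipate any genuine obstacle here: the entire content sits in \cref{ener.bd} and \cref{prop.nabla.u.bd}, and the corollary merely glues the small-scale and large-scale estimates together, the only (trivial) point being the cancellation $\epsilon^{-2}r^2\le 1$ when $r\le\epsilon$.
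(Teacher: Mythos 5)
Your proposal is correct and matches the paper's own (essentially one-line) argument: the paper likewise deduces from \cref{prop.nabla.u.bd} the pointwise bound $e^\epsilon\le C\epsilon^{-2}$, integrates to get $\int_{B_r(y)}e^\epsilon\le C\epsilon^{-2}r^n$, observes this is $\le Cr^{n-2}$ for $r\le\epsilon$, and combines with \cref{ener.bd} for $r>\epsilon$. Your explicit mention of $|u_t|\le 1$ and the cancellation $\epsilon^{-2}r^2\le 1$ simply spells out what the paper leaves implicit.
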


With the preceding estimates in hand, we can obtain a Huisken-type monotonicity formula that sharpens the preliminary one obtained in \cite[Section~6.2]{gammaConv}. In what follows, given $T>0$, let $h_t(x)=h(x,t)$ be a positive solution of the \emph{backward} heat equation $\partial_th=d^*dh$ on $ M\times[0,T)$, with $\int_Mh_t(x)\,dx=1$; specifically, we take
\begin{equation*}
	h_t(x) = h(x,t) := K(T - t, x, y).
\end{equation*}

\begin{proposition}\label{mono.prop}
	Given $\Lambda,t_0,t_1>0$ with $t_0<t_1$, and given $(u_t,\nabla_t)$ solving the gradient flow equations with $E_\epsilon(u_0,\nabla_0)\le\Lambda$, there exists a constant $C(M,\Lambda,t_0,t_1)$ such that
	$$ \frac{d}{ds}\left(e^{C\sqrt{T-s}}(T-s)\int_M h_se^{\epsilon}(u_s,\nabla_s)\right)\le\frac{C}{\sqrt{T-s}} $$
	for all $t_0\le s<T\le t_1$. In particular, we have
	$$ e^{C\sqrt{T-s}}\int_M h_se^{\epsilon}(u_s,\nabla_s)\ge e^{C\sqrt{T-t}}(T-t)\int_M h_te^{\epsilon}(u_t,\nabla_t)-2C\sqrt{t-s} $$
	for all $t_0\le s\le t<T\le t_1$, for the same constant $C$.
\end{proposition}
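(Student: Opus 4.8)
The plan is to prove the differential inequality directly, adapting Huisken's monotonicity computation \cite{Huisk} to the diffuse, nonstationary setting in the spirit of Ilmanen \cite{Ilmanen}, with \cref{disc.refined} playing the role that smallness of the Allen--Cahn discrepancy plays there. Write $\sigma:=T-s$, $\rho_s:=\sigma h_s=\sigma K(T-s,\cdot,y)$ and $\Theta(s):=\int_M\rho_s e^\epsilon(u_s,\nabla_s)\,\dvol_g=\sigma\int_M h_s e^\epsilon(u_s,\nabla_s)$; up to the universal factor $(4\pi)^{-1}$, the weight $(T-s)K(T-s,\cdot,y)$ behaves like the backward heat kernel of $(n-2)$-dimensional Euclidean space, so $\Theta$ is the natural diffuse $(n-2)$-dimensional Gaussian density. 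A first observation, used repeatedly, is that decomposing $M$ into dyadic annuli around $y$ of radii $\asymp 2^j\sqrt\sigma$, applying \cref{ener.bd.bis} on each, and using the Gaussian upper bound $h_s\le C(M)\sigma^{-n/2}e^{-d(\cdot,y)^2/(4\sigma)}$ from \cref{asympHK} yields $\Theta(s)\le C(M,\Lambda,t_0)$ for $t_0\le s<T\le t_1$. To compute $\Theta'(s)$, apply the $\epsilon$-Brakke identity \cref{epsBrakke} with the time-dependent test function $\phi=\rho_s$ and add the contribution $\int_M(\partial_s\rho_s)e^\epsilon_s$ of the $s$-dependence of $\rho_s$; using $\partial_s h_s=d^*dh_s$ (so $\partial_s\rho_s=d^*d\rho_s-\rho_s/\sigma$) and the divergence identity \cref{div.id} for $\mathcal{T}^\epsilon_s$, one integration by parts gives
$$\Theta'(s)=\int_M(\partial_s\rho_s)e^\epsilon_s-2\int_M\rho_s\big(|\dot u_s|^2+\epsilon^2|\dot\alpha_s|^2\big)-\int_M\langle\mathcal{T}^\epsilon_s,\nabla^2\rho_s\rangle.$$

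The main work is the algebraic rearrangement of the right-hand side. From $K(T-s,\cdot,y)\sim(4\pi\sigma)^{-n/2}e^{-d(\cdot,y)^2/(4\sigma)}v_0(\cdot,y)$ and Hessian comparison for $d(\cdot,y)^2$ one gets $\nabla^2\rho_s=\rho_s^{-1}\,\nabla\rho_s\otimes\nabla\rho_s-\tfrac1{2\sigma}\rho_s\,g+\mathcal{E}_s$, where the curvature error $\mathcal{E}_s$ satisfies $\int_M(|\mathcal{E}_s|+|\tr\mathcal{E}_s|)e^\epsilon_s\le C(M,\Lambda,t_0,t_1)\sigma^{-1/2}$ by the same annular decomposition. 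Introduce the ``position field'' $X_s:=-2\sigma\nabla\log\rho_s$, so that $\nabla\rho_s=-\tfrac1{2\sigma}\rho_s X_s$, $|X_s|\asymp d(\cdot,y)$, and $\nabla X_s=g-2\sigma\rho_s^{-1}\mathcal{E}_s$. Expanding $\langle\mathcal{T}^\epsilon_s,\nabla^2\rho_s\rangle$ and $\partial_s\rho_s=d^*d\rho_s-\rho_s/\sigma$ via these identities, completing the square in the transport term $(\divergence\mathcal{T}^\epsilon_s)(d\rho_s)$ against $-2\int_M\rho_s(|\dot u_s|^2+\epsilon^2|\dot\alpha_s|^2)$, and inserting the divergence-theorem identity $\int_M\divergence\big(\rho_s\,\mathcal{T}^\epsilon_s(X_s,\cdot)\big)=0$ (which converts $\tfrac1{2\sigma}\int_M\rho_s\,\mathcal{T}^\epsilon_s(X_s,X_s)$ into $\int_M\rho_s(\divergence\mathcal{T}^\epsilon_s)(X_s)+\int_M\rho_s\,\tr\mathcal{T}^\epsilon_s$ up to an $\mathcal{E}_s$-error) together with the trace identity $\tr\mathcal{T}^\epsilon_s=(n-2)e^\epsilon_s+2\big(W(u_s)/\epsilon^2-\epsilon^2|\omega_s|^2\big)$, one arrives at
$$\Theta'(s)\le-(\text{nonnegative square terms})+\frac1\sigma\int_M\rho_s\big(\epsilon^2|\omega_s|^2-W(u_s)/\epsilon^2\big)+\frac{C}{\sqrt\sigma}\big(1+\Theta(s)\big),$$
in which every term of order $\sigma^{-1}$ built from $e^\epsilon_s$ alone has cancelled.

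The surviving discrepancy term is treated using $\epsilon^2|\omega_s|^2-W(u_s)/\epsilon^2=\xi_s\big(\epsilon|\omega_s|+\tfrac{1-|u_s|^2}{2\epsilon}\big)$: where $\xi_s\le0$ it has the favourable sign and is discarded, and where $\xi_s>0$ it is bounded, via \cref{disc.refined} and the elementary inequality $\epsilon|\omega_s|+\tfrac{1-|u_s|^2}{2\epsilon}\le1+e^\epsilon_s$, by $C(M,\Lambda,t_0)(1+e^\epsilon_s)$, so that $\tfrac1\sigma\int_M\rho_s(\cdots)\le C(1+\sigma^{-1}\Theta(s))$. Using $\Theta(s)\le C$ and discarding the nonpositive squares, this gives $\Theta'(s)\le\tfrac{C}{\sqrt\sigma}(1+\Theta(s))$; absorbing the $\tfrac{C}{\sqrt\sigma}\Theta(s)$ part into the weight --- via $\tfrac{d}{ds}(e^{C\sqrt\sigma}\Theta(s))=e^{C\sqrt\sigma}(\Theta'(s)-\tfrac{C}{2\sqrt\sigma}\Theta(s))$, for $C$ large, using $e^{C\sqrt\sigma}\le e^{C\sqrt{t_1}}$ --- yields $\tfrac{d}{ds}(e^{C\sqrt\sigma}\Theta(s))\le\tfrac{C}{\sqrt\sigma}$, the first assertion. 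The second follows by integrating over $[s,t]\subset[t_0,T)$ and using $\int_s^t(T-\tau)^{-1/2}d\tau=2(\sqrt{T-s}-\sqrt{T-t})\le2\sqrt{t-s}$.

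The delicate point, which I expect to absorb most of the effort, is the cancellation of the $\sigma^{-1}$-order terms claimed in the second display: each of $\tfrac{n-2}{2\sigma}\int_M\rho_s e^\epsilon_s$ and $\tfrac1{4\sigma^2}\int_M\rho_s e^\epsilon_s|X_s|^2$ is individually of size $\asymp\sigma^{-1}$, and the bounds from \cref{ener.bd.bis} only see this size, so one cannot estimate them separately --- their exact cancellation is the nonstationary, diffuse analogue of $\int_\Sigma\Delta_\Sigma\rho=0$ in Huisken's computation, and it is precisely here that the stress-energy tensor $\mathcal{T}^\epsilon_s$ must be used in place of $e^\epsilon_s g$ to supply the effective tangent-plane projection, with the divergence-theorem identity above replacing the vanishing integral of the surface Laplacian. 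The only obstruction to exact cancellation is the discrepancy density $\epsilon^2|\omega_s|^2-W(u_s)/\epsilon^2$, which vanishes identically for self-dual configurations, and which is exactly what \cref{disc.refined} is designed to control; getting the constants and curvature errors right in this step is the technical heart of the argument. A secondary technical point is the $\mathcal{E}_s$-bound, which requires second-derivative control on the heat kernel slightly beyond the statement of \cref{asympHK} but follows from standard parabolic estimates.
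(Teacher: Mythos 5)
Your overall strategy is sound and closely aligned with what the paper does---Huisken-type monotonicity computation controlled by the refined discrepancy bound \cref{disc.refined}, plus the a priori Gaussian density bound $\Theta(s)\le C$---but there is a genuine gap at the discrepancy step that as written prevents the argument from closing. You bound $\epsilon|\omega_s|+\tfrac{1-|u_s|^2}{2\epsilon}\le 1+e_s^\epsilon$ and conclude $\tfrac1\sigma\int_M\rho_s\,\xi_s^+\big(\epsilon|\omega_s|+\tfrac{1-|u_s|^2}{2\epsilon}\big)\le C(1+\sigma^{-1}\Theta(s))$. But with $\Theta(s)\le C$ this only yields $C+C\sigma^{-1}$, which is \emph{not} bounded by $\tfrac{C}{\sqrt\sigma}(1+\Theta(s))$ for small $\sigma=T-s$, nor can the $\sigma^{-1}\Theta(s)$ part be absorbed into an exponential weight ($e^{C\sqrt\sigma}$ absorbs at most $\tfrac{C}{\sqrt\sigma}\Theta$; absorbing $\tfrac{C}{\sigma}\Theta$ would require a weight like $\sigma^{-C}$, which blows up as $s\to T^-$). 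So the jump from your penultimate display to $\Theta'(s)\le\tfrac{C}{\sqrt\sigma}(1+\Theta(s))$ is unjustified. The fix is to keep the sublinear power of $e_s$: use $\epsilon|\omega_s|+\tfrac{1-|u_s|^2}{2\epsilon}\le 2\sqrt{e_s^\epsilon}$ and then Cauchy--Schwarz, $\int_M\rho_s\sqrt{e_s^\epsilon}\le(\int_M\rho_s)^{1/2}(\int_M\rho_s e_s^\epsilon)^{1/2}=\sigma^{1/2}\Theta(s)^{1/2}$, which gives the discrepancy contribution $\tfrac{C}{\sqrt\sigma}\Theta(s)^{1/2}\le\tfrac{C}{\sqrt\sigma}$. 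This is exactly what the paper does (the $\Phi_h(s)^{1/2}$ factor in the differential inequality is essential).

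A secondary remark: you set out to re-derive the full Huisken-type computation from first principles (Hessian decomposition, position field, divergence-theorem rearrangement, trace identity), whereas the paper imports the inequality \cref{phi'est} directly from the earlier computation in \cite[Section~6.2]{gammaConv} and then only needs to upgrade the treatment of the discrepancy term. Your route is in principle viable, but it makes the proof substantially longer and forces you to confront the curvature-error term $\mathcal{E}_s$, whose control requires second-derivative heat kernel estimates that, as you note, go beyond the statement of \cref{asympHK}. You would either need to supply those estimates or cite a suitable reference; the paper avoids this entirely by black-boxing the prior computation. Once the Cauchy--Schwarz fix above is made, your argument has the same essential ingredients as the paper's, just with more of the underlying machinery exposed.
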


\begin{proof}
	
	As in \cite[Section~6.2]{gammaConv}, we begin by setting $e_t:=e^\epsilon(u_t,\nabla_t)$ and introducing the function
	\begin{equation*}
		\Phi_h(t):=\int_M h_t e_t. 
	\end{equation*}
	Computing exactly as in \cite[Section~6.2]{gammaConv}, we obtain the estimate
	\begin{equation}\label{phi'est}
		\Phi_h'(s)\leq \int_M\left(\frac{h_s}{T-s}+C+Ch_s\log(B/(T-s)^{n/2})\right)(|\nabla u|^2+2\epsilon^2|\omega|^2),
	\end{equation}
	for some constants $C(M),B(M)$, where we have supressed the subscript $t$ in $u_t$, $\nabla_t$, and $\omega_t$ for simplicity of notation. Writing
	\begin{align*}
		|\nabla u|^2+2\epsilon^2|\omega|^2 &= e_s+ \epsilon^2|\omega|^2-\frac{(1-|u|^2)^2}{4\epsilon^2}\\
		&= e_s+\xi_s\left(\epsilon |\omega|+\frac{(1-|u|^2)}{2\epsilon}\right)\\
		&\leq e_s+2(\xi_s)^+\sqrt{e_s},
	\end{align*}
	we can use \cref{disc.refined} in the preceding estimate for $\Phi_h'(s)$ to see that
	\begin{align*}
		\Phi_h'(s) &\leq\int_M\left(\frac{h_s}{T-s}+C+Ch_s\log(B/(T-s)^{n/2})\right)(e_s+2(\xi_s)^+\sqrt{e_s})\\
		&\leq\frac{1}{T-s}\Phi_h(s)+\frac{C}{T-s}\Phi_h(s)^{1/2} +C+C\log(B/(T-s)^{n/2})\Phi_h(s) 
	\end{align*}
	for $s\ge t_0$ and another constant $C(M,\Lambda,t_0)$.

In particular, we have
$$ \Phi_h'(s) \leq \frac{1}{T-s}\Phi_h(s)+\frac{C_2}{T-s}\Phi_h(s)^{1/2}+C_2+\frac{C_2}{2\sqrt{T-s}}\Phi_h(s) $$
for some $C_2(M,\Lambda,t_0,t_1)$.
Setting now
$$ \Psi_h(s):=(T-s)e^{C_2\sqrt{T-s}}\Phi_h(s), $$
we get again the differential inequality
$$ \Psi_h'(s)\le C(T-s)^{-1/2}\Psi_h(s)^{1/2}+C. $$
The same argument used in the proof of \cref{disc.refined} gives
$$ (T-s)\Phi_h(s)\le C(T-s)\int_M (T-s)^{-n/2}e^{-d(x,y)^2/[4(T-s)]}e_s(x)\,dx\le C, $$
provided that $T-s>\epsilon^2$. On the other hand, if $T-s\le\epsilon^2$ we reach the same conclusion using the pointwise bound $e_s\le C\epsilon^{-2}$ from \cref{prop.nabla.u.bd}, which gives $\Phi_h(s)\le C\epsilon^{-2}$.
We then obtain
$$ \Psi_h(s)\le C. $$
Hence, the previous differential inequality becomes
$$ \Psi_h'(s)\le C(T-s)^{-1/2} $$
for some constant $C(M,\Lambda,t_0,t_1)$.
\end{proof}

\begin{remark}\label{mono.quantitative}
The inequality \cref{phi'est} is obtained in \cite[Section~6.2]{gammaConv} from a stronger inequality, which includes an additional term of the form
\begin{align}\label{shrinker.term}
	-2\int_M h_t[|\dot u+h_t^{-1}(\nabla_t)_{dh_t} u_t|^2+\epsilon^2|\dot\alpha+h_t^{-1}\iota_{dh_t}\omega_t|^2]
\end{align}
on the right-hand side.
If we do not drop \cref{shrinker.term}, we get the more quantitative bound
\begin{align*}
	&\frac{d}{ds}\left(e^{C\sqrt{T-s}}(T-s)\int_M h_se_s\right) \\
	&\le
	-2(T-s)\int_M h_s[|\dot u+h_s^{-1}(\nabla_s)_{dh_s} u_s|^2+\epsilon^2|\dot\alpha+h_s^{-1}\iota_{dh_s}\omega_s|^2]+\frac{C}{\sqrt{T-s}}.
\end{align*}
\end{remark}

\begin{remark}
Note that in $\mathbb{R}^n$, for initial data satisfying $\epsilon|F_{\nabla}|\leq \frac{1-|u|^2}{\epsilon}$, one can obtain a cleaner version of the above Huisken monotonicity formula. More precisely, letting
\begin{equation*}
\Psi_{T,p}^\epsilon(t):=(4\pi)^{-n/2}(T-t)^{-(n-2)/2}\int_{\R^n}
e^{-\frac{|x-p|^2}{4(T-t)}}\,d\mu_t^{\epsilon},
\end{equation*}
assuming that $\xi_t\le 0$ and that $e_t$ vanishes rapidly at infinity (for each $t>0$), from the previous computations we get 
\begin{align*}
&\frac{d}{dt}\Psi_{T,p}^\epsilon(t) \\
&=-\int_{\R^n} \left[ h |\xi|\left(\epsilon \vert \omega \vert + \frac{1-|u|^2}{2\epsilon}\right)+(T-t)(|\dot{u}+\nabla_{\frac{dh}{h}}u|^2+\epsilon^2|\dot{\alpha}+\iota_{\frac{dh}{h}}\omega|^2)\right],
\end{align*}
where we dropped the subscript $t$ from each term.
\end{remark}

\subsection{Passing measures to the limit} \label{secLimMeasure}
We conclude this section by explaining how to obtain a limiting family of measures $(\mu_t)_{t}$ from the energy measures $(\mu_t^\epsilon)_t$ as $\epsilon\to 0$. 

Following \cite[Section~5]{Ilmanen}, we start by proving that $\mu^\epsilon_t$ satisfies a ``semi-decreasing'' property with constants independent of $\epsilon$. By \cref{epsBrakke}, we have
\begin{align} \label{semidec}
\begin{split}
\frac{d}{dt} \mu_t^\epsilon(\phi) & =  -2 \int_M \phi ( |\dot{u}|^2+\epsilon^2|\dot{\alpha}|^2 ) - 2\int_M ( \langle \nabla_{d\phi}u,\dot{u}\rangle+\epsilon^2\omega(d\phi,\dot{\alpha})) \\
& \le - \int_M (\phi \left\vert \dot{u} + \phi^{-1}\nabla_{d\phi}u \right\vert^2+\epsilon^2 \phi \left\vert \dot{\alpha}+\phi^{-1}\iota_{d\phi}\omega \right\vert^2) \\
& \quad + \int_M \phi^{-1}( |\nabla_{d\phi}u|^2+\epsilon^2|\iota_{d\phi}d\alpha|^2 )  \\
& \leq \int_M \phi^{-1}(|\nabla_{d\phi}u|^2+\epsilon^2|\iota_{d\phi}d\alpha|^2) \\ 
& \leq \sup_{\{\phi > 0\}} \frac{\vert d\phi \vert^2}{\phi} \mu_{t}^{\epsilon}(\{ \phi > 0 \}) \\
& \leq C(\phi),  
\end{split} 
\end{align}
for $\phi \in C^2(M, \mathbb{R}_{\ge0})$, where in the last line we used the interpolation estimate 
\begin{equation} \label{C1phi}
\sup_{\{\phi > 0\}} \frac{\vert d\phi \vert^2}{\phi} \leq 2\max \vert D^2 \phi \vert,
\end{equation}
whose proof can be found in \cite[Lemma~6.6]{EllipticReg}. Consequently, 
$$t\mapsto \mu_t^\epsilon (\phi) - C(\phi)t$$ 
is a  non-increasing function; in the literature, a family of measures satisfying an inequality of this type is someties called \textit{semi-decreasing}. Crucially, the constant $C(\phi)$ is independent of $\epsilon$ in this case.

Now, as in \cite[Section~5]{Ilmanen}, we choose a countable and dense set $B_1 \subset [0, \infty)$. By the mass bounds we have for the measures $\mu_t^\epsilon$ and the compactness properties of Radon measures, we may select a subsequence $(\epsilon_j)_{j\in\N}$ and measures $(\mu_t)_{t \in B_1}$ such that 
\begin{equation}
\mu_t^{\epsilon_j} \rightharpoonup^* \mu_t \quad \text{for all $t \in B_1$}.
\end{equation}
Note that the family $B_1\ni t\mapsto \mu_t$ then inherits the semi-decreasing property with the same constants $C(\phi);$ i.e.,
$$B_1\ni t\mapsto \mu_t(\phi)-C(\phi)t$$
is a non-increasing function for every $\phi\in C^2(M,\mathbb{R}_{\ge0}).$

Next, let $\{\phi_k\}_{k \in \mathbb{N}}$ be a countable and dense subset of $C^2(M, \mathbb{R}_{\ge0})$. By the semi-decreasing property for the family $(\mu_t)_{t\in B_1}$, there is a set $B_2 \subseteq [0, \infty)$, whose complement is at most countable, such that for $t \in B_2$ and all $k \in \mathbb{N}$ we have that $\mu_s(\phi_k)$ is continuous at $t$ as a function of $s \in B_1$. For any fixed $t \in B_2$, we can find a further subsequence $(\mu^{\epsilon_{j_\ell}}_t)$ converging to a limit $\mu_t$.

Again by the semi-decreasing property, we have that $(\mu_s(\phi_k))_{s \in B_1 \cup \{t\}}$ is continuous at $t$ for each $k \in \mathbb{N}$. Density of $\{\phi_k\}_{k \in \mathbb{N}}$ implies that the limiting measure $\mu_t$ is in fact uniquely determined by $(\mu_s)_{s \in B_1}$. Thus, the full sequence converges: $\mu_t^{\epsilon_j} \rightarrow \mu_t$ and, consequently, the limiting measures $\mu_t$ are defined for each $t \in B_2$. To extend it to the whole of $[0, \infty)$ we perform a further diagonal sequence argument, identical to those in \cite[p.~433]{Ilmanen} or \cite[Section~7]{EllipticReg}. Summarizing, we have proved the following. 

\begin{proposition} \label{CorLimMeas}
There exists a subsequence $(\epsilon_j)_{j\in\N}$ with $\epsilon_j\to 0$ and a family of Radon measures $(\mu_t)_{t\ge0}$ such that $\mu_t^{\epsilon_j}$ weak-$*$ converge to $\mu_t$ for all $t \geq 0$, as Radon measures. 
\end{proposition}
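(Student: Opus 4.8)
The plan is to run the by-now-standard compactness scheme of Ilmanen \cite[Section~5]{Ilmanen}, which upgrades the $\epsilon$-uniform semi-decreasing estimate already recorded in \cref{semidec} to weak-$*$ convergence of the energy measures at \emph{every} $t\ge 0$. The two uniform inputs I would lean on are: the mass bound $\mu_t^\epsilon(M)=E_\epsilon(u_t,\nabla_t)\le E_\epsilon(u_0,\nabla_0)\le\Lambda$, coming from monotonicity of energy along \cref{gf}; and the semi-decreasing property $\frac{d}{dt}\mu_t^\epsilon(\phi)\le C(\phi)$ with $C(\phi)=2\Lambda\max|D^2\phi|$ \emph{independent of $\epsilon$}, derived in \cref{semidec} from the $\epsilon$-Brakke identity \cref{epsBrakke} together with the interpolation inequality \cref{C1phi}. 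In particular $t\mapsto\mu_t^\epsilon(\phi)-C(\phi)t$ is non-increasing for every $\phi\in C^2(M,\mathbb{R}_{\ge0})$, with the same constant along the sequence $\epsilon_j\to 0$.

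First I would fix a countable dense set $B_1\subset[0,\infty)$. By weak-$*$ compactness of the uniformly mass-bounded family $\{\mu_t^\epsilon\}$ and a diagonal extraction over the countably many $t\in B_1$, one obtains a subsequence $\epsilon_j\to 0$ and Radon measures $(\mu_t)_{t\in B_1}$ with $\mu_t^{\epsilon_j}\rightharpoonup^*\mu_t$ for all $t\in B_1$; passing to the limit in the semi-decreasing inequality, the family $B_1\ni t\mapsto\mu_t(\phi)-C(\phi)t$ is still non-increasing for each $\phi$. Then I fix a countable dense family $\{\phi_k\}\subset C^2(M,\mathbb{R}_{\ge0})$; for each $k$ the function $s\mapsto\mu_s(\phi_k)$ on $B_1$ is a monotone function plus a linear one, hence extends to a function with one-sided limits everywhere having at most countably many jumps, so the set $B_2\subseteq[0,\infty)$ of common continuity points of all the $\mu_\cdot(\phi_k)$ has at most countable complement.

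For $t\in B_2$, any subsequential weak-$*$ limit $\mu_t$ of $(\mu_t^{\epsilon_j})$ is then pinned down: the semi-decreasing property sandwiches $\mu_t^{\epsilon_j}(\phi_k)$ between $\mu_s^{\epsilon_j}(\phi_k)\pm C(\phi_k)|t-s|$ for $s\in B_1$ on either side of $t$, and letting $j\to\infty$ and then $s\to t$ along $B_1$ forces $\mu_t(\phi_k)=\lim_{s\to t,\,s\in B_1}\mu_s(\phi_k)$. By density of $\{\phi_k\}$ in $C^2(M,\mathbb{R}_{\ge0})$ this determines $\mu_t$ uniquely, so the full subsequence converges, $\mu_t^{\epsilon_j}\rightharpoonup^*\mu_t$, for every $t\in B_2$. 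Finally I would extend the definition of $\mu_t$ to the at most countably many times in $[0,\infty)\setminus B_2$ by one more diagonal extraction, exactly as on \cite[p.~433]{Ilmanen} or \cite[Section~7]{EllipticReg}, obtaining a single subsequence (which we relabel $\epsilon_j$) along which $\mu_t^{\epsilon_j}\rightharpoonup^*\mu_t$ for all $t\ge 0$.

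I do not anticipate a serious obstacle here: the one genuinely substantive ingredient is the $\epsilon$-independence of the semi-decreasing constant $C(\phi)$, which has already been established in \cref{semidec} via \cref{epsBrakke} and \cref{C1phi}. Everything else is a soft measure-theoretic argument — weak-$*$ compactness, the fact that monotone functions have at most countably many jumps, and repeated diagonalization — so the only real work is careful bookkeeping of the nested subsequences and exceptional null/countable sets of times.
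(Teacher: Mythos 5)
Your proposal is correct and follows essentially the same route as the paper's own proof in \cref{secLimMeasure}: the $\epsilon$-uniform semi-decreasing estimate from \cref{semidec} (with $C(\phi)$ coming from \cref{epsBrakke} and \cref{C1phi}), diagonal extraction over a countable dense set $B_1$ of times, passage to the co-countable set $B_2$ of common continuity points of $s\mapsto\mu_s(\phi_k)$ for a countable dense family $\{\phi_k\}\subset C^2(M,\R_{\ge0})$, uniqueness of subsequential limits on $B_2$ via the sandwiching argument, and a final diagonalization over the countable complement. There is nothing to add.
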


\section{Clearing-out and exponential decay}\label{decay.sec}

We prove in this section various ``clearing-out'' or energy quantization results, and establish exponential decay of the energy away from the zero set, which are crucial ingredients in the proof of \cref{thmBrakke}, analogous in spirit to some results of \cite[Section~6]{Ilmanen} in the Allen--Cahn setting. 

\begin{proposition}[clearing-out for values]\label{clearing.values}
	Given $\delta\in(0,1)$ there exist positive constants $\eta_V(M,\Lambda,\delta,t_0)$ and $c_V(M,\Lambda,\delta,t_0)$ such that, given $r\in(\epsilon,\mz\inj]$ and $t\ge t_0>0$, the following holds:
	if $E_\epsilon(u_0,\nabla_0)\le\Lambda$ and $\mu_t^\epsilon(B_r(p))\le\eta_V r^{n-2}$, then
	\begin{align*}
		&|u_\epsilon|^2(p,t+c_Vr^2)
		\ge 1-\delta.
	\end{align*}
\end{proposition}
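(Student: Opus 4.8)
The plan is to combine the pointwise energy-density bound from \cref{prop.nabla.u.bd} with a parabolic differential-inequality argument for the quantity $1-|u|^2$, using the small-energy hypothesis to control the ``bad set'' where $|u|$ is far from $1$. The key observation is that by \cref{Wboch} the function $w_t:=\frac{1-|u_t|^2}{\epsilon}$ satisfies
\begin{equation*}
	-(\de_t+d^*d)w_t=\frac{|u_t|^2}{\epsilon^2}w_t-\frac{2}{\epsilon}|\nabla_t u_t|^2\le\frac{1}{\epsilon^2}w_t,
\end{equation*}
so after rescaling to unit scale ($\hat x=x/\epsilon$, $\hat t=t/\epsilon^2$) the rescaled function $\hat w$ is a nonnegative subsolution of a linear heat equation with bounded zeroth-order coefficient on a ball of radius $r/\epsilon\gg 1$. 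A parabolic mean-value / $L^1$-to-$L^\infty$ estimate for subsolutions then bounds $\hat w$ at the center of a slightly smaller parabolic cylinder by its spacetime $L^1$-average, i.e.
\begin{equation*}
	w_{t+c_V r^2}(p)\le C\,\epsilon^{-n-2}\int_{t}^{t+c_Vr^2}\int_{B_{r/2}(p)}\frac{1-|u_s|^2}{\epsilon}\,dy\,ds.
\end{equation*}

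Next I would estimate the right-hand side using the energy hypothesis. By \cref{prop.nabla.u.bd} and the Cauchy–Schwarz inequality one has $\frac{1-|u_s|^2}{\epsilon}\le C\epsilon\,e_s^\epsilon+C\epsilon$ pointwise (since $\frac{(1-|u|^2)^2}{\epsilon^2}\le C e^\epsilon+C$ forces $\frac{1-|u|^2}{\epsilon}\le \sqrt{C\epsilon^2 e^\epsilon+C\epsilon^2}\le C\epsilon\sqrt{e^\epsilon}+C\epsilon$, and then $\epsilon\sqrt{e^\epsilon}\le \frac12(\epsilon^2 e^\epsilon+1)$). Hence the spacetime integral is bounded by $C\epsilon\int_t^{t+c_Vr^2}\mu_s^\epsilon(B_{r/2}(p))\,ds+C\epsilon\,r^{n}$. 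To control $\mu_s^\epsilon(B_{r/2}(p))$ for $s$ slightly larger than $t$ from the single-time bound $\mu_t^\epsilon(B_r(p))\le\eta_V r^{n-2}$, I would use the semi-decreasing/$\epsilon$-Brakke inequality \cref{epsBrakke} (or equivalently \cref{semidec}) with a cutoff $\phi$ supported in $B_r(p)$ and equal to $1$ on $B_{r/2}(p)$: this gives $\mu_s^\epsilon(B_{r/2}(p))\le \mu_s^\epsilon(\phi)\le \mu_t^\epsilon(\phi)+C(\phi)(s-t)\le C\eta_V r^{n-2}+Cr^{n-4}(s-t)$, so for $s-t\le c_Vr^2$ with $c_V$ small this stays $\le C\eta_V r^{n-2}$ (enlarging $\eta_V$ harmlessly, or absorbing). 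Plugging back in, the center-point bound becomes
\begin{equation*}
	w_{t+c_Vr^2}(p)\le C\epsilon^{-n-2}\bigl(\epsilon\cdot c_Vr^2\cdot\eta_V r^{n-2}+\epsilon r^n\bigr)
	=C c_V\eta_V\Bigl(\frac{r}{\epsilon}\Bigr)^{n}\frac{1}{r^{?}}\cdots,
\end{equation*}
which is not yet small — the naive scaling is off, signalling that one must not rescale to unit scale but rather exploit the correct parabolic scaling at scale $r$.

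The correct approach, and the step I expect to be the main obstacle, is therefore a more careful iteration rather than a single mean-value inequality: I would run a De Giorgi–Nash–Moser / Stampacchia-type iteration (or follow \cite[Section~6]{Ilmanen} closely) on the parabolic cylinder $B_r(p)\times[t,t+c_Vr^2]$ for the subsolution $w$, but crucially tracking that $w\le C/\epsilon$ is already bounded, so that it suffices to upgrade an $L^1$-smallness statement to $L^\infty$-smallness at a single interior point, and the bad contribution from the zeroth-order term $\frac{1}{\epsilon^2}w$ is controlled because $w$ is supported (up to exponentially small error) near $\{|u|<1-\beta\}$, whose energy is $\gtrsim$ a fixed multiple of $\epsilon^{n-2}$ per unit $(n-2)$-volume — here the small-energy hypothesis $\mu_t^\epsilon(B_r)\le\eta_V r^{n-2}$ with $\eta_V$ small forces this bad set to occupy only a small portion of $B_r$. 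Concretely: either (a) $|u|(p,t+c_Vr^2)^2\ge 1-\delta$ and we are done, or (b) $|u|(p,t+c_Vr^2)^2<1-\delta$, in which case combining the gradient bound $|\nabla u|\le C\frac{1-|u|^2}{\epsilon}+C$ from \cref{prop.nabla.u.bd} with $1-|u|^2>\delta$ at $(p,t+c_Vr^2)$ propagates (via the same heat-equation subsolution property for $1-|u|^2$, run backward in a short time and forward in space) to give $1-|u_s|^2\ge c(\delta)$ on a parabolic cylinder of definite size $\sim \min(\epsilon,\sqrt{c_Vr^2})$ around $(p,t+c_Vr^2)$; integrating the potential energy density $\frac{(1-|u_s|^2)^2}{4\epsilon^2}\ge \frac{c(\delta)^2}{4\epsilon^2}$ over this cylinder and using again \cref{epsBrakke} to transfer back to time $t$ produces $\mu_t^\epsilon(B_r(p))\ge c(\delta,M,\Lambda,t_0)r^{n-2}$, contradicting the hypothesis once $\eta_V$ is chosen smaller than this constant. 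The delicate points are: choosing $c_V$ small enough that the semi-decreasing estimate does not destroy the smallness, handling the two regimes $r\gtrless\epsilon$ (for $r$ comparable to $\epsilon$ one works at unit scale after rescaling; for $r\gg\epsilon$ one needs the energy bound to localize), and making the propagation estimate for $1-|u|^2$ quantitative and uniform — this last is where the B\"ochner formula \cref{Wboch} and the maximum principle do the real work.
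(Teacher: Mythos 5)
Your final contradiction scheme starts correctly and in fact matches the paper's opening step: assume $|u_\epsilon|^2(p,t+c_Vr^2)<1-\delta$, use the gradient bound $|\nabla_\epsilon u_\epsilon|\le C/\epsilon$ from \cref{prop.nabla.u.bd} to propagate $1-|u_\epsilon|^2\gtrsim\delta$ to a ball of radius $s\sim c\delta\epsilon$ around $p$ at that later time, and integrate the potential term to get an energy lower bound $\mu_{t+c_Vr^2}^\epsilon(B_s(p))\gtrsim s^{n-2}\sim\epsilon^{n-2}$. The earlier ideas you float (a single parabolic mean-value estimate, a De Giorgi--Nash--Moser iteration) you already flag as having scaling problems, and indeed they are not what is needed.

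The genuine gap is in the final step: transferring the lower bound back to time $t$ via the $\epsilon$-Brakke/semi-decreasing inequality \cref{epsBrakke} does not close the argument when $r\gg\epsilon$. That inequality, with a cutoff $\phi$ supported in $B_r(p)$ and $C(\phi)\sim r^{-2}\cdot\mu^\epsilon(\{\phi>0\})\lesssim r^{n-4}$, yields
$\mu_t^\epsilon(B_r(p))\ge\mu_{t+c_Vr^2}^\epsilon(\phi)-Cc_Vr^{n-2}\gtrsim\epsilon^{n-2}-Cc_Vr^{n-2}$,
which is entirely compatible with the hypothesis $\mu_t^\epsilon(B_r(p))\le\eta_V r^{n-2}$ as soon as $r$ is a large multiple of $\epsilon$: the right-hand side can be negative or much smaller than $\eta_V r^{n-2}$. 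The $\epsilon$-Brakke inequality is too coarse to upgrade an $\epsilon^{n-2}$ lower bound at scale $\epsilon$ to an $r^{n-2}$ lower bound at scale $r$. The paper's proof instead uses the Huisken-type monotonicity of \cref{mono.prop}: one observes that the weighted integral $\int\rho_{p,\tau}(x,t+c_Vr^2)\,d\mu_{t+c_Vr^2}^\epsilon(x)$ with $\tau\sim t+c_Vr^2+s^2$ is $\gtrsim 1$ because the backward heat kernel weight $\rho$ at its own concentration scale $s$ is $\sim s^{2-n}$ there, so the concentration at scale $\epsilon$ contributes $O(1)$ to this scale-invariant quantity. Monotonicity then propagates $\int\rho\,d\mu^\epsilon\gtrsim 1$ back to time $t$ (up to a $\sqrt{c_Vr^2}$ error); the tail of the Gaussian outside $B_r(p)$ is controlled by \cref{ener.bd.bis} for a suitable choice of $c_V$; and on $B_r(p)$ the weight is $\lesssim r^{2-n}$ since $\tau-t\sim r^2$, whence $\mu_t^\epsilon(B_r(p))\gtrsim r^{n-2}$, contradicting the hypothesis once $\eta_V$ is small. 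So you need to replace the use of \cref{epsBrakke} by \cref{mono.prop}; it is precisely the scale-invariance of the Gaussian density that bridges the gap between the scale $\epsilon$ where the lower bound is produced and the scale $r$ where the hypothesis lives.
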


\begin{proof}
Let $c_1>0$ be a constant to be determined later.
From \cref{nabla.u.bd} we get $$|\nabla_\epsilon u_\epsilon|(\cdot,t+c_1r^2)\le \frac{C(M,\Lambda,t_0)}{\epsilon}.$$
Hence, assuming $|u_\epsilon|^2(p,t+c_1r^2)<1-\delta$, using the bound $|d|u|^2|\le 2|\nabla u|$ we get
\begin{align*}
		&|u_\epsilon|^2(x,t+c_1r^2)<1-\frac{\delta}{2}
\end{align*}
for all $x\in B_{s}(p)$, with $s:=c_2\delta\epsilon$ for $c_2(M,\Lambda,t_0)$ sufficiently small. 
This implies that $\mu_{t+c_1r^2}^\epsilon(B_{s}(p))\ge c(M,\Lambda,\delta,t_0)\epsilon^{-2}s^{n}=c(M,\Lambda,\delta,t_0)s^{n-2}$.
Thus, in view of the asymptotics of \cref{asympHK}, setting
$$\rho_{p,\tau}(x,t):=(\tau-t)K(\tau-t,p,x),$$
we have
\begin{align*}
		&\int \rho_{p,t+c_1r^2+s^2}(x,t)\,d\mu_{t+c_1r^2}^\epsilon(x)
		\ge c(M,\Lambda,\delta,t_0).
\end{align*}
Together with \cref{mono.prop} (as usual, we can assume without loss of generality that $t\le 1$), this gives
\begin{align*}
		&\int \rho_{p,t+c_1r^2+s^2}(x,t)\,d\mu_t^\epsilon(x)
		\ge c_3(M,\Lambda,\delta,t_0)
	\end{align*}
if $r,s$ are small enough.
Also, by \cref{asympHK}, since $c_1r^2+s^2\le c_1r^2+(c_2r)^2\le 2c_1r^2$, assuming $c_2^2\le c_1$, we can now fix $c_1$ such that
	\begin{align*}
		&\int_{M\setminus B_r(p)}\rho_{p,t+c_1r^2+s^2}(x,t)\,d\mu_t^\epsilon(x)
		\le C\sum_{k=1}^\infty c_1^{1-n/2}e^{-(k-1)^2r^2/(2c_1r^2)}\frac{\mu_t^\epsilon(B_{kr}(p))}{r^{n-2}}
		\le \frac{c_3}{2},
\end{align*}
where we used \cref{ener.bd.bis}.
This gives
\begin{align*}
		&\int_{B_r(p)} \rho_{p,t+c_1r^2+s^2}(x,t)\,d\mu_t^\epsilon(x)
		\ge\frac{c_3}{2}.
\end{align*}
Since $\rho_{p,t+c_1r^2+s^2}(x,t)\le c_4 r^{2-n}$ on $B_r(p)$,
we must have $\mu_t^\epsilon(B_r(p))\ge \frac{c_3}{2c_4}r^{n-2}$.
The claim follows with $c_{V}:=c_1$ and $\eta_V:=\frac{c_3}{4c_4}$.
\end{proof}
We are now able to prove exponential decay away from the zero set up to an error term, generalizing \cite[Proposition~5.3]{PigatiStern} from the stationary case.

\begin{lemma}[Exponential decay]\label{decay}
	There exist constants $a_D(M)>0$, $\beta_D(M,\Lambda)\in(0,1)$ and $C(M,\Lambda,t_0)$ with the following property.
	Given $t\ge t_0>0$ and $p\in M$, we have
	\begin{align*}
		&e^\epsilon(p,t)
		\le \frac{C}{\epsilon^2}e^{-a_D r(p,t)/\epsilon}+C\epsilon^2,
	\end{align*}
	where $r(p,t)$ is the maximum value $r\in[0, \inj/2]$ such that $r^2\le\frac{t}{2}$ and $|u|^2\ge 1-\beta_D$ on $\bar B_r(p)\times [t-r^2,t]$. We set $r(p,t)=0$ if $|u|^2(p,t)<1-\beta_D$. 
\end{lemma}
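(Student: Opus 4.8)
The strategy is a standard comparison/barrier argument for the quantity $\frac{1-|u_t|^2}{\epsilon}$, which we know satisfies the parabolic differential inequality \cref{Wboch}: since $|u|\le 1$ we have
\[
-(\de_t+d^*d)\frac{1-|u_t|^2}{\epsilon}\ge \frac{|u_t|^2}{\epsilon^2}\cdot\frac{1-|u_t|^2}{\epsilon}-\frac{2}{\epsilon}|\nabla_t u_t|^2,
\]
and on the region where $|u|^2\ge 1-\beta_D$ with $\beta_D$ small, the coefficient $|u|^2/\epsilon^2$ is bounded below by roughly $\frac{1}{2\epsilon^2}$. To control the bad term $\frac{2}{\epsilon}|\nabla_t u_t|^2$, I first invoke \cref{prop.nabla.u.bd}, which gives $|\nabla_t u_t|^2\le C\frac{(1-|u_t|^2)^2}{\epsilon^2}+C\le C\beta_D\frac{1-|u_t|^2}{\epsilon^2}+C$ on the good region; absorbing the first piece into the main term (choosing $\beta_D$ small relative to universal constants) leaves a differential inequality of the form $-(\de_t+d^*d)f\ge \frac{a}{\epsilon^2}f-C$ for $f:=\frac{1-|u_t|^2}{\epsilon}$ on the parabolic cylinder $\bar B_r(p)\times[t-r^2,t]$, where $a=a(M)>0$. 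One then builds an explicit supersolution (barrier) $g$ on this cylinder that dominates $f$ on the parabolic boundary — where one uses the trivial bound $f\le \frac{1}{\epsilon}$, together with the exponential-in-distance decay of a suitable radial barrier like $\epsilon^{-1}e^{-a_D\,\dist(\cdot,\partial B_r(p))/\epsilon}$ plus a constant particular solution $\sim C\epsilon^2/a$ to absorb the inhomogeneous term — and concludes by the maximum principle that $f(p,t)\le \frac{C}{\epsilon}e^{-a_D r/\epsilon}+C\epsilon^2$ at the center.

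Once $\frac{1-|u(p,t)|^2}{\epsilon}$ is controlled, the full energy density $e^\epsilon(p,t)$ is controlled via \cref{nabla.u.bd} and the discrepancy bound of \cref{disc.refined}: indeed $e^\epsilon=|\nabla u|^2+\epsilon^2|\omega|^2+\frac{(1-|u|^2)^2}{4\epsilon^2}$, and \cref{prop.nabla.u.bd} bounds $|\nabla u|^2\le C\frac{(1-|u|^2)^2}{\epsilon^2}+C$ while $\epsilon|\omega|=\frac{1-|u|^2}{2\epsilon}+\xi\le \frac{1-|u|^2}{2\epsilon}+C$ by \cref{disc.refined}; hence $e^\epsilon(p,t)\le C\left(\frac{1-|u(p,t)|^2}{\epsilon}\right)^2+C$, and plugging in $\frac{1-|u(p,t)|^2}{\epsilon}\le \frac{C}{\epsilon}e^{-a_D r/\epsilon}+C\epsilon^2$ gives, after absorbing the cross term, a bound of the form $e^\epsilon(p,t)\le\frac{C}{\epsilon^2}e^{-2a_D r/\epsilon}+C\epsilon^2$ (relabel $2a_D\mapsto a_D$). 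One subtlety: $r(p,t)$ is \emph{defined} so that $|u|^2\ge 1-\beta_D$ holds on the closed cylinder $\bar B_r(p)\times[t-r^2,t]$, so the barrier argument is legitimate precisely on that cylinder; if $r(p,t)=0$ the inequality is vacuous since the right side is at least $C/\epsilon^2\ge e^\epsilon$ (using the crude bound $e^\epsilon\le C/\epsilon^2$ from \cref{prop.nabla.u.bd}).

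The main obstacle I anticipate is constructing the barrier cleanly on a geodesic ball in a Riemannian manifold rather than in Euclidean space: the Laplacian of a radial function $\epsilon^{-1}e^{-a_D\rho/\epsilon}$ (where $\rho=\dist(\cdot,p)$ or $\dist(\cdot,\partial B_r)$) picks up a first-order term $\frac{a_D}{\epsilon}\Delta\rho\cdot(\cdots)$ involving the mean curvature of geodesic spheres, which is $O(1)$ independent of $\epsilon$ and hence negligible compared to the $O(\epsilon^{-2})$ leading term, but this has to be checked carefully, and one needs $r\le \inj/2$ (which is built into the hypothesis) so that $\rho$ is smooth. One must also be a little careful that the constants $a_D,\beta_D$ are genuinely \emph{universal} (depending only on $M$, resp.\ on $M$ and $\Lambda$) and not on $t_0$: the dependence on $t_0$ enters only through the constant $C$ coming from \cref{prop.nabla.u.bd} and \cref{disc.refined}, consistent with the statement. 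A secondary point is handling the case $r^2>t/2$, which the definition of $r(p,t)$ rules out by fiat, so no issue there; and when $r$ is of order $\epsilon$ or smaller the claimed bound is weaker than the trivial $C/\epsilon^2$ bound and there is nothing to prove, so one may freely assume $r\gg\epsilon$ in running the barrier comparison.
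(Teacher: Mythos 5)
Your Stage~1 (the barrier argument for $\frac{1-|u|^2}{\epsilon}$ on the parabolic cylinder) matches the paper's proof almost exactly: the paper uses the barrier $\varphi(x,s)=\exp[(a/\epsilon)(d_p(x)^2+(t-s)+\epsilon^2)^{1/2}]$, which handles your concern about the first-order geometric term through the $O(1)$ quantity $\Delta d_p^2$, and absorbs $|\nabla u|^2$ via \cref{nabla.u.bd} and the choice of small $\beta_D$ precisely as you propose.

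However, your final step --- deducing the bound on $e^\epsilon(p,t)$ from the potential-term bound by invoking \cref{prop.nabla.u.bd} and \cref{disc.refined} --- has a genuine gap. Those two results give $|\nabla u|^2\le C\frac{(1-|u|^2)^2}{\epsilon^2}+C$ and $\xi\le C$ with additive errors that are $O(1)$, \emph{not} $O(\epsilon^2)$: the constants there come from heat-kernel bounds ($\varphi_t,\tilde\varphi_t\le C(M,\Lambda,t_0)$) and do not improve as $\epsilon\to 0$. Plugging your Stage~1 estimate into $e^\epsilon\le C\big(\frac{1-|u|^2}{\epsilon}\big)^2+C$ therefore yields $e^\epsilon(p,t)\le\frac{C}{\epsilon^2}e^{-2a_Dr/\epsilon}+C$, and the trailing $+C$ cannot be ``absorbed'' into $+C\epsilon^2$ --- it is not a cross term, it is a fixed constant. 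This is strictly weaker than the stated lemma, and the difference is not cosmetic: the $+C\epsilon^2$ is exactly what makes the error term negligible after integrating over an $O(\epsilon^{n-2})$-volume region in \cref{e.decay.aux} (where the extra contribution must be $o(\epsilon^{n-2})$), and it is also what gives $\mu=0$ away from the vorticity in \cref{clearing.spt}; a $+C$ tail would destroy both applications.

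The paper repairs this by running \emph{two further} barrier arguments of the same kind: one for the signed discrepancy $\xi=\epsilon|\omega|-\frac{1-|u|^2}{2\epsilon}$ using the subequation $(\partial_t-\Delta)\xi\le-\frac{1}{2\epsilon^2}\xi+C\sqrt{e^\epsilon}$ (from \cref{omega.boch} and \cref{Wboch}), and one for $w=|\nabla u|-\frac{1-|u|^2}{\epsilon}$ using \cref{NablaSquared.boch} and \cref{Wboch}. In each case the damping $-\frac{1}{2\epsilon^2}(\cdot)$ turns the $C\sqrt{e^\epsilon}\le C/\epsilon$ inhomogeneity into a $C\epsilon$ error at an interior maximum, so the coarse estimates of \cref{prop.nabla.u.bd} and \cref{disc.refined} are used only as \emph{a priori} bounds on the parabolic boundary, never to close the final estimate. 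If you want to salvage your outline, you must run these two extra comparison arguments rather than appeal to \cref{prop.nabla.u.bd} and \cref{disc.refined} directly at the end.
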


\begin{proof}
	We first show that, for some positive constants $C=C(M,\Lambda)$ and $a=a(M)$,
	\begin{align}\label{pot.decay}
		&\frac{1-|u|^2(p,t)}{2\epsilon}
		\le \frac{C}{\epsilon}e^{-ar(p,t)/\epsilon}+C\epsilon.
	\end{align} 
	
	Let $r:=r(p,t)$; clearly we can assume that $r>0$, since otherwise the claim holds trivially with $C=\frac{1}{2}$. 
	Recall that
	\begin{align*}
		(\de_s-\Delta)\frac{1-|u|^2}{2}(x,s)
		&=|\nabla u|^2-\frac{|u|^2}{2\epsilon^2}(1-|u|^2) \\
		&\le \frac{C(1-|u|^2)-|u|^2}{2\epsilon^2}(1-|u|^2)+C
	\end{align*}
	by \cref{u.boch} and \cref{nabla.u.bd}. Thus, if we fix $\beta_D(M,\Lambda)$ small enough, we have
	\begin{align*}
		&(\de_s-\Delta)\frac{1-|u|^2}{2}
		\le-\frac{1-|u|^2}{4\epsilon^2}+C
	\end{align*}
	on $\bar B_r(p)\times[t-r^2,t]$ (since $|u|^2\ge 1-\beta_D$ on this set, by definition of $r$).
	Let
	\begin{align*}
		&\varphi(x,s):=\exp[(a/\epsilon)(d_p(x)^2+(t-s)+\epsilon^2)^{1/2}]
	\end{align*}
	and $f:=\frac{1-|u|^2}{2}-\lambda\varphi$, for constants $a,\lambda$ to be chosen later. Then it is easy to compute
	\begin{align*}
		&(\de_s-\Delta)\varphi(x,s) \\
		&\ge -\frac{a}{\epsilon}\left(\frac{1+\Delta d_p^2(x)}{2(d_p(x)^2+(t-s)+\epsilon^2)^{1/2}}
		+\frac{(a/\epsilon)d_p(x)^2}{d_p(x)^2+(t-s)+\epsilon^2}\right)\varphi(x,s) \\
		&\ge -\frac{a+C(M)a+a^2}{\epsilon^2}\varphi(x,s) \\
		&\ge -\frac{1}{2\epsilon^2}\varphi(x,s)
	\end{align*}
	once we fix $a=a(M)$ small enough.
	For $f$ we deduce
	\begin{align*}
		&(\de_s-\Delta)f
		\le -\frac{1}{2\epsilon^2}f+C.
	\end{align*}
	Choosing now $\lambda:=\beta_D e^{-ar/\epsilon}$, we get
	\begin{align*}
		&f(x,s)
		\le\frac{\beta_D}{2}-\lambda\varphi(x,s)\le\frac{\beta_D}{2}-\lambda e^{ar/\epsilon}
		<0
	\end{align*}
	for all $(x,s)$ on the parabolic boundary $( B_r(p)\times\{t-r^2\})\cup(\de B_r(p)\times[t-r^2,t])$, since for these points we have $1-|u|^2(x,t)\le\beta_D$ by definition of $r$.
	If $(x,s)$ is a positive maximum for $f$ on $\bar B_r(p)\times[t-r^2,t]$, we then obtain
	\begin{align*}
		&0
		\le -\frac{1}{2\epsilon^2}f(x,s)+C,
	\end{align*}
	and thus $f\le C\epsilon^2$ on $\bar B_r(p)\times[t-r^2,t]$.
	Evaluating at $(p,t)$ we get
	\begin{align*}
		&\frac{1-|u|^2(p,t)}{2}\le (\beta_D e^a)e^{-ar/\epsilon}+C\epsilon^2,
	\end{align*}
	as desired. 
 
 As in \cite[Section~3]{PigatiStern}, from \cref{omega.boch} and \cref{Wboch} we easily get the subequation
	\begin{align*}
		&(\de_t-\Delta)\xi
		\le -\frac{|u|^2}{\epsilon^2}\xi+C(M)\epsilon|\omega|
		\le -\frac{1}{2\epsilon^2}\xi+C\sqrt{e^\epsilon}
	\end{align*}
	for $\xi=\epsilon|\omega|-\frac{1-|u|^2}{2\epsilon}$ on the same region. We deduce that
	\begin{align*}
		&(\de_t-\Delta)(\xi-\lambda'\varphi)
		\le -\frac{1}{2\epsilon^2}(\xi-\lambda'\varphi)+C\sqrt{e^\epsilon}
	\end{align*}
	for $\lambda'>0$. By \cref{prop.nabla.u.bd}, we have the preliminary bound
	\begin{align}\label{e.prel}
		&\sqrt{e^\epsilon}(x,s)
		\le \frac{C}{\epsilon}.
	\end{align}  
	In particular, at an interior (i.e., not on the parabolic boundary) positive maximum $(x,s)$ for the function $\xi-\lambda'\varphi$, we have
    $$\xi-\lambda'\varphi\le C\epsilon^2\sqrt{e^\epsilon}\le C\epsilon.$$

	Choosing $\lambda':=\frac{C}{\epsilon}e^{-ar/\epsilon}$ for $C$ large enough,
    on the parabolic boundary of $\bar B_{r}(p)\times[t-r^2,t]$ we estimate $\xi$ using \cref{e.prel} and obtain
    $\xi-\lambda'\varphi<0$. Hence, $\xi-\lambda'\varphi\le C\epsilon$ on all the region. Evaluating at $(p,t)$, we obtain
	\begin{align*}
		\xi(p,t)
		& \le \lambda'\varphi(p,t)+C\epsilon
		\le \frac{C}{\epsilon}e^{-ar/\epsilon}
		+C\epsilon.
	\end{align*}
 
	Since $r(x,s)\ge r/2$ for all $(x,s)\in\bar B_{r/2}(p)\times[t-r^2/4,t]$, we deduce	
	\begin{align}\label{decay.int}
		&\frac{1-|u|^2}{2\epsilon}+\epsilon|\omega|
		\le \frac{C}{\epsilon}e^{-ar/(2\epsilon)}
		+C\epsilon
	\end{align}
	on this smaller set.
	Set now $w:=|\nabla u|-\frac{1-|u|^2}{\epsilon}$ and $\tilde w:=w-\lambda''\varphi$. Combining as in the proof of \cref{prop.nabla.u.bd} the B\"ochner formula \cref{NablaSquared.boch} for $\vert \nabla u \vert$ and \cref{Wboch} for $(1 - \vert u \vert^2)/\epsilon$, we obtain  
	\begin{align*}
		&(\de_t-\Delta)\tilde w
		\le -\frac{|u|^2}{\epsilon^2}(w-\lambda''\varphi)
		+|\nabla u|\left(-\frac{2}{\epsilon}|\nabla u|+\frac{1-|u|^2}{2\epsilon^2}+2|\omega|+C(M)\right).
	\end{align*}
	The maximum principle implies that, at an interior positive maximum for $\tilde w$, we have
	$$|\nabla u|\le\frac{1-|u|^2}{4\epsilon}+\epsilon|\omega|+C\epsilon,$$
    and thus, at the same maximum point, using \cref{decay.int} we have
    $$\tilde w\le \frac{C}{\epsilon}e^{-ar/(2\epsilon)}+C\epsilon.$$
    Hence, again choosing $\lambda'':=\frac{C}{\epsilon}e^{-ar/(2\epsilon)}$ so that $w-\lambda''\varphi<0$ on the parabolic boundary of $\bar B_{r/2}(p)\times[t-r^2/4,t]$ (where we bound $w$ with \cref{e.prel}), we obtain
    $$\tilde w(p,t)\le\max_{\bar B_{r/2}(p)\times[t-r^2/4,t]}\tilde w\le \frac{C}{\epsilon}e^{-ar/(2\epsilon)}+C\epsilon,$$
    and hence
	\begin{align*}
		w(p,t)
		&\le \lambda''\varphi(p,t)
		+\frac{C}{\epsilon}e^{-ar/(2\epsilon)}
		+C\epsilon \\
		&\le \frac{C}{\epsilon}e^{-ar/(2\epsilon)}
		+C\epsilon.
	\end{align*}
	Putting together the previous estimates, we obtain
	\begin{align*}
		&\sqrt{e^\epsilon}(p,t)
		\le \frac{C}{\epsilon}e^{-ar/(2\epsilon)}
		+C\epsilon,
	\end{align*}
	as desired. The statement follows with $a_D:=a$ and $C_D:=2C^2$.
\end{proof}

Combining \cref{clearing.values} with \cref{decay} and passing to the limit yields the following density lower bound for the limiting family of measures.

\begin{proposition}[clearing-out for support]\label{clearing.spt}
	There exist positive constants $\eta_S(M,\Lambda,t_0)$ and $c_S(M,\Lambda,t_0)$ such that, for $0<r\le\mz\inj$ and $t\ge t_0>0$, the following holds:
	if $\mu_t(B_r(p))\le\eta_S r^{n-2}$, then $(p,t+c_S r^2)\nin\spt(\mu)$
	and $p\nin\spt(\mu_{t+c_S r^2})$.
\end{proposition}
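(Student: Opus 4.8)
The plan is to combine the clearing-out for \emph{values} (\cref{clearing.values}) with the exponential decay estimate (\cref{decay}), and then pass to the limit along the subsequence $\epsilon_j\to 0$ furnished by \cref{CorLimMeas}. Fix once and for all $\delta:=\tfrac12\beta_D$, with $\beta_D=\beta_D(M,\Lambda)$ as in \cref{decay}, so that the constants $\eta_V,c_V$ produced by \cref{clearing.values} depend only on $M,\Lambda,t_0$; I will then take $\eta_S:=\tfrac12\,4^{-(n-2)}\eta_V$ and $c_S:=\tfrac{3}{16}c_V$.

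First I would propagate the small-mass hypothesis from $\mu_t$ to the approximating measures. Assuming $\mu_t(B_r(p))\le\eta_S r^{n-2}$, weak-$*$ convergence on the compact set $\bar B_{3r/4}(p)$ forces $\mu_t^{\epsilon_j}(\bar B_{3r/4}(p))\le 4^{-(n-2)}\eta_V r^{n-2}$ for all $j$ large (the threshold depending on $p,r,t$). Hence for every $q\in B_{r/4}(p)$ and every $\rho\in[r/4,r/2]$ we have $B_\rho(q)\subseteq\bar B_{3r/4}(p)$, as well as $\epsilon_j<\rho\le\tfrac14\inj$, so $\mu_t^{\epsilon_j}(B_\rho(q))\le 4^{-(n-2)}\eta_V r^{n-2}\le\eta_V\rho^{n-2}$, and \cref{clearing.values} applies with center $q$, radius $\rho$, at time $t$: it gives $|u_{\epsilon_j}|^2(q,t+c_V\rho^2)\ge 1-\tfrac12\beta_D$. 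Letting $q$ and $\rho$ vary, and then using continuity of $u_{\epsilon_j}$, this shows $|u_{\epsilon_j}|^2\ge 1-\beta_D$ on the closed parabolic slab $\bar B_{r/4}(p)\times[t+\tfrac1{16}c_Vr^2,\ t+\tfrac14 c_Vr^2]$, for all $j$ large.

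Next I would feed this slab into \cref{decay}. Put $t':=t+c_Sr^2$ and $\rho_0:=\min\{r/8,\ \tfrac14\sqrt{c_V}\,r,\ \sqrt{t_0/2}\}>0$. A routine check shows that for every $(q,s)$ with $q\in\bar B_{r/8}(p)$ and $s\in[t+\tfrac18 c_Vr^2,\ t+\tfrac14 c_Vr^2]$ — an interval that contains $t'$ in its interior — the backward cylinder $\bar B_{\rho_0}(q)\times[s-\rho_0^2,s]$ lies inside the slab above, while $\rho_0\le\inj/2$ and $\rho_0^2\le t_0/2\le s/2$; hence the radius $r(q,s)$ in \cref{decay} is at least $\rho_0$, and therefore $e^{\epsilon_j}(q,s)\le C\epsilon_j^{-2}e^{-a_D\rho_0/\epsilon_j}+C\epsilon_j^2=:\eta_j\to 0$. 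Integrating, $\mu_s^{\epsilon_j}(B_{r/8}(p))\le\eta_j\,\vol_g(B_{r/8}(p))$ for each such $s$, and $\mu^{\epsilon_j}\bigl(B_{r/8}(p)\times(t+\tfrac18c_Vr^2,\ t+\tfrac14c_Vr^2)\bigr)\to 0$ as $j\to\infty$.

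Passing to the limit is then immediate from weak-$*$ lower semicontinuity on open sets: on the one hand $\mu_{t'}(B_{r/8}(p))\le\liminf_j\mu_{t'}^{\epsilon_j}(B_{r/8}(p))=0$, so $p\nin\spt(\mu_{t'})$; on the other, since $\mu^{\epsilon_j}\rightharpoonup^*\mu$ on $M\times[0,\infty)$ (from \cref{CorLimMeas} and the uniform energy bound, by dominated convergence in time), the open box $B_{r/8}(p)\times(t+\tfrac18c_Vr^2,\ t+\tfrac14c_Vr^2)$ has $\mu$-measure zero, and since it contains $(p,t')$ we conclude $(p,t')\nin\spt(\mu)$. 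The one genuinely delicate point is the second paragraph: \cref{clearing.values} pins down $|u_\epsilon|^2$ only at the single spacetime point $(q,t+c_V\rho^2)$, whereas \cref{decay} demands $|u_\epsilon|^2\ge1-\beta_D$ on an entire backward parabolic cylinder, so one must sweep the radius $\rho$ (hence the output time) simultaneously with the center $q$, while keeping every ball $B_\rho(q)$ inside $B_r(p)$ so the small-mass hypothesis is preserved — this is exactly what pins down the quantitative relations between $\eta_S,\eta_V$ and between $c_S,c_V$, everything else being bookkeeping with nested balls and the heat-kernel asymptotics already recorded.
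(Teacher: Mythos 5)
Your proof is correct and follows essentially the same strategy as the paper's: propagate the small-mass hypothesis to the approximating measures $\mu_t^{\epsilon_j}$ on a nested family of balls, use \cref{clearing.values} (sweeping both center and radius) to produce a parabolic slab on which $|u_{\epsilon_j}|^2 \ge 1-\beta_D$, feed that slab into \cref{decay} to get exponential decay of $e^{\epsilon_j}$ on a smaller cylinder containing $(p,t')$, and pass to the limit by weak-$*$ lower semicontinuity on open sets. The only differences are cosmetic choices of nesting constants (the paper takes radii $s\in[r/8,r/4]$ with $\delta=\beta_D$ and settles on $c_S=c_V/20$, you take $\rho\in[r/4,r/2]$ with $\delta=\tfrac12\beta_D$ and $c_S=\tfrac{3}{16}c_V$), and you are a touch more explicit about the constraint $\rho_0^2\le s/2$ from the definition of $r(\cdot,\cdot)$ in \cref{decay}, which the paper handles implicitly.
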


\begin{proof}
	Choosing $\eta_S<8^{2-n}\eta_V(M,\Lambda,\beta_D,t_0)$, we have
	\begin{align*}
		&\mu_t^\epsilon(B_{r/2}(p))
		\le 8^{2-n}\eta_V r^{n-2}
	\end{align*}
	for $\epsilon$ small enough, which gives
	\begin{align*}
		&\mu_t^\epsilon(B_s(x))
		\le \eta_V s^{n-2}
	\end{align*}
	for all $x\in \bar B_{r/4}(p)$ and all radii $s\in[\frac{r}{8},\frac{r}{4}]$.
	By \cref{clearing.values}, for $\epsilon$ small enough this implies that
	\begin{align*}
		&|u_\epsilon|^2
		\ge 1-\beta_D
		\quad\text{on }\bar B_{r/4}(p)\times[t+c_Vr^2/64,t+c_Vr^2/16].
	\end{align*}
	Hence, for all points $(x,s)$ with $s\in[t+c_V r^2/32,t+c_V r^2/16]$ and $x\in \bar B_{r/8}(p)$,
	we have $r(x,s)\ge\sqrt{c_V}\frac{r}{8}$ (we assumed here without loss of generality $c_V\le 1$). By \cref{decay} we get
	\begin{align*}
		&e^\epsilon
		\le \frac{C_D}{\epsilon^2}e^{-a_D \sqrt{c_V} r/(8\epsilon)}+C_D\epsilon^2
	\end{align*}
	on $\bar B_{r/8}(p)\times[t+c_V r^2/32,t+c_V r^2/16]$.
	Letting $\epsilon\to 0$, this readily implies that
	\begin{align*}
		&\mu( B_{r/8}(p)\times(t+c_V r^2/32,t+c_V r^2/16))
		=0,
	\end{align*}
	as well as $\mu_{t+c_V r^2/20}(B_{r/8}(p))=0$,
	so that the statement follows with $c_S:=c_V/20$.
\end{proof}

As a corollary, we have the following basic structural result (cf.\ \cite[Corollary~6.2]{Ilmanen}).

\begin{corollary}\label{clarity.spt}
	The measures $\mu_t$ provide the disintegration for $\mu$ on $ M\times(0,\infty)$, i.e.,
	\begin{align*}
		&\mu=\mu_t\otimes\mathcal{L}^1(t).
	\end{align*}
	Also, $\spt(\mu)=\overline{\bigcup_{t>0}(\spt(\mu_t)\times\{t\})}$. In particular, for all $t > 0$, we have $\spt \mu_t \subseteq (\spt \mu)_t$, the $t$-slice of $\spt\mu$.
\end{corollary}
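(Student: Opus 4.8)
The plan is to establish the disintegration and the support identity by combining the $\epsilon$-level disintegration $\mu^\epsilon=\mu_t^\epsilon\otimes\mathcal{L}^1(t)$ with the clearing-out result of \cref{clearing.spt}. First I would recall that by definition $d\mu^\epsilon=d\mu_t^\epsilon\otimes dt$, and that by \cref{CorLimMeas} we have $\mu_t^{\epsilon_j}\rightharpoonup^*\mu_t$ for every $t>0$ along the chosen subsequence. Passing to the limit in the spacetime measures, one gets $\mu^{\epsilon_j}\rightharpoonup^*\mu$ for some Radon measure $\mu$ on $M\times(0,\infty)$; the point is to identify this $\mu$ with $\mu_t\otimes\mathcal{L}^1(t)$. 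To do so, test against a product function $\phi(x)\chi(t)$ with $\phi\in C^0(M)$, $\chi\in C_c^0((0,\infty))$: at the $\epsilon$-level, $\int\phi\chi\,d\mu^\epsilon=\int\chi(t)\mu_t^\epsilon(\phi)\,dt$, and since $t\mapsto\mu_t^\epsilon(\phi)$ is (by the semi-decreasing estimate \cref{semidec} applied to $\phi$ and $-\phi$, after writing $\phi$ as a difference of nonnegative functions, or more simply by the uniform mass bound and dominated convergence) uniformly bounded and converges pointwise to $\mu_t(\phi)$, we can pass to the limit inside the time integral to get $\int\chi(t)\mu_t(\phi)\,dt$. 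This identifies $\mu$ with $\mu_t\otimes\mathcal{L}^1(t)$ on product test functions, hence on all of $C_c(M\times(0,\infty))$ by density, giving the first claim.

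Next I would turn to the support identity $\spt(\mu)=\overline{\bigcup_{t>0}(\spt(\mu_t)\times\{t\})}$. The inclusion $\supseteq$ is essentially formal: if $(p,t)\in\spt(\mu_t)\times\{t\}$ for some $t>0$, then every spacetime neighborhood $B_r(p)\times(t-\delta,t+\delta)$ has $\mu$-measure $\int_{t-\delta}^{t+\delta}\mu_\tau(B_r(p))\,d\tau$, and I want to show this is positive. Here one uses the semi-decreasing property of $\tau\mapsto\mu_\tau(\phi)$ (inherited in the limit, as noted after \cref{CorLimMeas}): since $\mu_t(B_r(p))>0$ and the function $\tau\mapsto\mu_\tau(\tilde\phi)-C(\tilde\phi)\tau$ is non-increasing for a suitable nonnegative $\tilde\phi\le\mathbf{1}_{B_r(p)}$ supported in $B_r(p)$ with $\tilde\phi(p)>0$, the quantity $\mu_\tau(\tilde\phi)$ cannot drop to zero instantaneously for $\tau$ slightly less than $t$, so $\int_{t-\delta}^{t}\mu_\tau(B_r(p))\,d\tau>0$. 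Taking the closure gives $\overline{\bigcup_{t>0}(\spt(\mu_t)\times\{t\})}\subseteq\spt(\mu)$.

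For the reverse inclusion $\spt(\mu)\subseteq\overline{\bigcup_{t>0}(\spt(\mu_t)\times\{t\})}$, suppose $(p,t)\notin$ the right-hand side; then there is a spacetime ball $B_r(p)\times(t-\delta,t+\delta)$ (with $t-\delta>0$) disjoint from $\bigcup_{\tau>0}(\spt(\mu_\tau)\times\{\tau\})$, meaning $\mu_\tau(B_r(p))=0$ for all $\tau\in(t-\delta,t+\delta)$. By the disintegration just established, $\mu(B_r(p)\times(t-\delta,t+\delta))=\int_{t-\delta}^{t+\delta}\mu_\tau(B_r(p))\,d\tau=0$, so $(p,t)\notin\spt(\mu)$. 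Finally, the ``in particular'' clause $\spt\mu_t\subseteq(\spt\mu)_t$ follows directly: if $p\in\spt\mu_t$, then $(p,t)\in\spt(\mu_t)\times\{t\}\subseteq\spt(\mu)$ by the inclusion $\supseteq$ proved above, and $(p,t)\in\spt(\mu)$ means precisely $p\in(\spt\mu)_t$.

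The main obstacle I anticipate is making the limiting argument for the disintegration fully rigorous — in particular justifying the interchange of limit and time-integration, which requires knowing $\mu_t^{\epsilon_j}(\phi)\to\mu_t(\phi)$ for a.e.\ (indeed every) $t$ together with a uniform dominating bound; the uniform mass bound from \cref{ener.bd} (valid for $t\ge t_0$) handles the domination on $[t_0,\infty)$ for any $t_0>0$, and since $\chi$ has compact support in $(0,\infty)$ this suffices. A minor subtlety is that \cref{CorLimMeas} already gives convergence for every $t>0$, so no a.e.\ qualifier is even needed. The clearing-out input \cref{clearing.spt} is not strictly necessary for the statement as phrased, since the key positivity in the $\supseteq$ direction comes from semi-decreasingness rather than from clearing-out; I would mention \cref{clearing.spt} only as the conceptual reason the supports behave nicely, but the proof itself rests on the disintegration plus the semi-decreasing property.
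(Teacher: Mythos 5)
Your proof of the disintegration is essentially identical to the paper's: test against product functions $\chi(t)\varphi(x)$ and pass the limit through the time integral by dominated convergence, using the uniform energy bound and the fact that $\mu_t^{\epsilon_j}(\varphi)\to\mu_t(\varphi)$ for every $t$.

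For the support identity $\spt(\mu)=\overline{\bigcup_{t>0}(\spt(\mu_t)\times\{t\})}$, however, you take a genuinely different route from the paper, and your observation that \cref{clearing.spt} is dispensable here is correct. The paper establishes the nontrivial inclusion $\overline{\bigcup}\subseteq\spt(\mu)$ by contraposition: starting from $(p,t)\notin\spt(\mu)$, it applies the clearing-out \cref{clearing.spt} to the vanishing slices $\mu_{t'}(B_{2r}(p))=0$ (a.e.\ $t'\in(t-r,t)$) to deduce $\spt(\mu_\tau)\cap B_r(p)=\emptyset$ for $\tau$ in a full spacetime neighborhood of $t$, thereby excluding $(p,t)$ from the closure. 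You instead prove the inclusion directly: if $p\in\spt(\mu_t)$, pick a nonnegative $\tilde\phi\in C^2$ supported in $B_r(p)$ with $\tilde\phi(p)>0$, so that $\mu_t(\tilde\phi)>0$; the semi-decreasing property \cref{semidec} then gives $\mu_\tau(\tilde\phi)\geq\mu_t(\tilde\phi)-C(\tilde\phi)(t-\tau)>0$ for $\tau$ slightly below $t$, whence $\mu(B_r(p)\times(t-\delta,t))=\int_{t-\delta}^t\mu_\tau(B_r(p))\,d\tau>0$ by the disintegration. Since $\spt(\mu)$ is closed, this yields $\overline{\bigcup}\subseteq\spt(\mu)$. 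Your argument is more elementary: semi-decreasingness is a soft consequence of the $\epsilon$-level Brakke inequality and the interpolation bound \cref{C1phi}, whereas clearing-out rests on the full machinery of Huisken monotonicity and the decay estimates. The two arguments also use monotonicity in opposite directions — semi-decreasingness controls the measure going backward in time (the mass cannot drop abruptly), whereas clearing-out propagates smallness forward in time — which is why one naturally goes via the direct statement and the other via the contrapositive. The easy inclusion $\spt(\mu)\subseteq\overline{\bigcup}$ and the ``in particular'' clause are handled the same way in both.

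One point to be careful about if you write this up: the semi-decreasing property is first established only for $t$ in a countable dense set $B_1$ and then, via the construction in \cref{secLimMeasure}, extended to all $t\geq 0$; you should cite that this extension indeed preserves the semi-decreasing property for the full family $(\mu_t)_{t\ge 0}$ (this is part of the standard Ilmanen construction the paper refers to, but it is worth flagging since your argument leans on it for arbitrary $t>0$).
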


\begin{proof}
	By the semi-decreasing property \cref{semidec}, the map $t\mapsto\mu_t$ is weakly measurable.
	Given $\chi\in C^0_c((0,\infty))$ and $\varphi\in C^0(M)$, we have
	\begin{align*}
		\int_{ M\times(0,\infty)}\chi(t)\varphi(x)\,d\mu(x,t)
		&=\lim_{\epsilon\to 0}\int_0^\infty\chi(t)\int_M\varphi(x)\,d\mu_t^\epsilon(x)\,dt \\
		&=\int_0^\infty\chi(t)\int_M\varphi(x)\,d\mu_t(x)\,dt
	\end{align*}
	by dominated convergence, since $\int_M\varphi\,d\mu_t^\epsilon$ is bounded by $\|\varphi\|_{C^0}\Lambda$ and converges to $\int_M\varphi\,d\mu_t$ as $\epsilon\to 0$. This proves the first assertion. Also, if $(p,t)\nin\spt(\mu)$ then there exists $0<r<t/2$ such that
	\begin{align*}
		&\mu_{t'}(B_{2r}(p))=0\quad\text{for a.e.\ }t'\in(t-r,t).
	\end{align*}
	For these values $t'$, this implies that $\mu_{t'}(B_s(q))=0$ for all $q\in B_r(p)$ and $0<s<r$. Hence, $q\nin\spt(\mu_{t'+c_S s^2})$ for all $q\in B_r(p)$ and $0<s<r$, with $c_S(M,\Lambda,t/2)$ given by \cref{clearing.spt}. This implies that $$(p,t)\nin\overline{\bigcup_{\tau>0}(\spt(\mu_\tau)\times\{\tau\})},$$ while the reverse implication is trivial. 
\end{proof}

\section{Generic tangent flows and rectifiability}\label{tf.sec}
The main goal of this section is to show that, for a.e.\ time $t>0$, the measure $\mu_t$ is \emph{rectifiable},
in the sense that it is supported on an $(n-2)$-rectifiable set. In the course of the argument we will also establish a lower bound on the $(n-2)$-density of $\mu_t$ at $\mu_t$-a.e.\ point, complementing the global upper bound given by \cref{ener.bd.bis}. 

\subsection{Parabolic vs Euclidean density}
Although not strictly needed, to simplify the exposition it is useful to show that (an averaged version of) the Euclidean density
$$ \Theta_{n-2}(\mu_t,p)=\lim_{r\to 0}\frac{\mu_t(B_r(p))}{\omega_{n-2}r^{n-2}} $$
exists at $\mu_t$-a.e.\ point $p$, for a.e.\ time $t>0$.
This fact is not obvious a priori, while a parabolic version
of this density, which we call \emph{parabolic density} or \emph{Gaussian density} (cf.\ \cite[Section~2.9]{White} or \cite[Section~II.0.1]{BethuelOrlandiSmets}), is easily seen to exist as a byproduct of monotonicity, as explained below.
We will then check that, at almost every point in the spacetime, an averaged version $\tilde\Theta_{n-2}(\mu_T,p)$ of the usual Euclidean density exists and agrees with the Gaussian one.

Given a point $p\in M$ and a time $T>0$, consider the backward heat kernel
$$h_t(x):=K(T-t,x,p).$$
Defining
$$\Psi_{T,p}^\epsilon(t):=e^{C'\sqrt{T-t}}(T-t)\int_M h_t\,d\mu_t^{\epsilon},$$
where $C'$ is the constant from \cref{mono.prop} (with $t_0:=T/2$ and $t_1:=T$), we have
\begin{equation}\label{mono.before.lim}
	(\Psi_{T,p}^\epsilon)'(t)
	\le \frac{C'}{\sqrt{T-t}}.
\end{equation}

For $t<T$, set
$$\Psi_{T,p}(t):=\lim_{\epsilon\to 0}\Psi_{T,p}^\epsilon(t)=e^{C'\sqrt{T-t}}(T-t)\int_M h_t\,d\mu_t,$$
and observe that, integrating \cref{mono.before.lim}, we have the limiting monotonicity
$$\Psi_{T,p}(s)\geq \Psi_{T,p}(t)-2C'\sqrt{t-s}\quad\text{for }s<t,$$
so in particular we can define the \emph{parabolic density} at $(p,T)$:
$$\Theta^P(\mu,p,T):=4\pi\lim_{t\to T^-}\Psi_{T,p}(t)=4\pi\lim_{t\to T^-}(T-t)\int_M h_t\,d\mu_t,$$
We also define the modified Euclidean density
$$ \tilde\Theta_{n-2}(\mu_T,p):=4\pi\lim_{t\to T^-}(T-t)\int_M h_t\,d\mu_T
=\lim_{r\to 0}4\pi r^2\int_M K(r^2,x,p)\,d\mu_T(x), $$
provided that the limit exists.
Using \cref{asympHK}, we see that
\begin{align}\label{mod.theta.bis} &\tilde\Theta_{n-2}(\mu_T,p)=\lim_{r\to 0}(4\pi r^2)^{-(n-2)/2}\int_M e^{-d(x,p)^2/(4r^2)}\,d\mu_T(x), \end{align}
and it is straightforward to check that $\tilde\Theta_{n-2}(\mu_T,p)=\Theta_{n-2}(\mu_T,p)$ whenever the latter exists.
Also, using monotonicity,
it is easy to see that the parabolic density dominates the modified Euclidean density: $\Theta^P(\mu,p,T)\geq \tilde\Theta_{n-2}(\mu_T,p)$ whenever the latter exists:
indeed, by definition we have
$$ \tilde\Theta_{n-2}(\mu_T,p)=4\pi\lim_{r\to 0}\Psi_{T+r^2,p}(T)\le 4\pi\lim_{r\to 0}\Psi_{T+r^2,p}(s)+C\sqrt{T-s} $$
for any $s<T$, where the inequality follows from the previous monotonicity. Since the last limit is just $\Psi_{T,p}(s)$, letting $s\to T^-$ we obtain the claim.

We next show that equality holds for $\mu$-a.e.\ $(p,T)\in  M\times(0,\infty)$.
Indeed, consider the set $G\subseteq \operatorname{spt}(\mu)$ of points $(p,T)\in M\times (0,\infty)$ in the support of $\mu$ such that
\begin{equation}
	M_{\nu}(p,T):=\sup_{r\in (0,1)}\frac{\nu(\bar B_r(p)\times [T-r^2,T+r^2])}{\mu(\bar B_{5r}(p)\times [T-25r^2,T+25r^2])}<\infty, 
\end{equation}
where as before $\nu^{\epsilon}$ denotes the measure
$$d\nu^{\epsilon}:=(|\dot{u}_t^{\epsilon}|^2+\epsilon^2|\dot{\alpha}_t^{\epsilon}|^2)\, \dvol_g\otimes dt$$
and $\nu=\lim_{\epsilon\to 0}\nu^{\epsilon}$ the (subsequential) limit. We extend $\mu$ and $\nu$ to $ M\times\R$, by letting them vanish on $ M\times(-\infty,0)$. 

\begin{proposition}\label{G.ae}
	In the positive spacetime $ M\times(0,\infty)$, $\mu$-almost every point belongs to $G$.
\end{proposition}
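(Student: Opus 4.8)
The plan is to derive this from the finiteness of the total mass of $\nu$ by a Vitali covering argument, in the spirit of the classical fact that the maximal function of a finite measure is finite almost everywhere with respect to any Radon measure.

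First I would record that $\nu$ is a finite measure on $M\times\mathbb{R}$. Integrating the energy identity \eqref{enerID} from $s=0$ gives $2\nu^\epsilon(M\times[0,\infty))\le E_\epsilon(u^\epsilon_0,\nabla^\epsilon_0)\le\Lambda$ for every $\epsilon$, and lower semicontinuity under the weak-$*$ convergence $\nu^\epsilon\rightharpoonup\nu$ (together with our convention that $\nu$ vanishes on $M\times(-\infty,0)$) yields $\nu(M\times\mathbb{R})\le\Lambda/2$.

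Next I would set up the covering. I equip $M\times\mathbb{R}$ with the parabolic metric $d_P((x,t),(y,s)):=\max\{d(x,y),|t-s|^{1/2}\}$, so that the closed parabolic ball of radius $r$ about $(p,T)$ is exactly $Q_r(p,T):=\bar B_r(p)\times[T-r^2,T+r^2]$ and its $5$-fold dilate is $\bar B_{5r}(p)\times[T-25r^2,T+25r^2]$---precisely the sets appearing in the definition of $M_\nu$. For fixed $\lambda>0$, consider $E_\lambda:=\{(p,T)\in\spt(\mu)\cap(M\times(0,\infty)):M_\nu(p,T)>\lambda\}$; each of its points comes equipped with a radius $r=r(p,T)\in(0,1)$ with $\nu(Q_r(p,T))>\lambda\,\mu(Q_{5r}(p,T))$. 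Since these balls have radii $\le 1$, the basic $5r$-covering lemma (valid in an arbitrary metric space, with no doubling assumption) yields a countable pairwise-disjoint subfamily $\{Q_{r_i}(p_i,T_i)\}$ whose $5$-fold dilates cover $E_\lambda$. Hence
$$\mu(E_\lambda)\le\sum_i\mu(Q_{5r_i}(p_i,T_i))<\frac1\lambda\sum_i\nu(Q_{r_i}(p_i,T_i))\le\frac{\nu(M\times\mathbb{R})}{\lambda}\le\frac{\Lambda}{2\lambda},$$
the third inequality using disjointness. Letting $\lambda\to\infty$ gives $\mu\big(\{M_\nu=\infty\}\big)=0$, and since $\mu$ is concentrated on its support this is exactly the assertion that $\mu$-almost every point of $M\times(0,\infty)$ lies in $G$.

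I do not anticipate any real difficulty. The only points needing a little care are the lower semicontinuity used to pass the bound on $\nu^\epsilon$ to the limit, the Borel measurability of $M_\nu$ needed to make sense of $\mu(E_\lambda)$---which can be sidestepped by replacing $\mu$ with its outer measure in the covering estimate---and the bookkeeping ensuring that the $5$-fold parabolic dilates in the Vitali lemma coincide with the enlarged cylinders in the definition of $M_\nu$, which is precisely why the factors $5$ and $25=5^2$ were built into that definition.
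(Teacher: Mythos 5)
Your argument is correct and follows essentially the same route as the paper's own proof: the Vitali $5r$-covering lemma applied to parabolic balls, combined with the uniform bound $\nu(M\times\mathbb{R})\le\Lambda/2$ from the energy identity, yielding the weak-$(1,1)$ type estimate $\mu(\{M_\nu>\lambda\})\lesssim\lambda^{-1}$. The only differences are cosmetic (the paper works directly with the set $\{M_\nu=\infty\}$ and a parameter $\delta$ instead of superlevel sets $E_\lambda$, and states $\nu(M\times\mathbb{R})<\infty$ without the explicit constant), so nothing further is needed.
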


\begin{proof}
	Observe that we can write
	$$M_{\nu}(p,T)=\sup_{r\in (0,1)}\frac{\nu(\bar B^P_r(p,T))}{\mu(\bar B^P_{5r}(p,T))},$$
	where we denote by $B^P_r$ the $r$-ball with respect to the parabolic metric
	$$d_P((p,s),(q,t)):=\max\{d(p,q),\sqrt{|t-s|}\}.$$
	
	The proof now follows the usual lines of proof for the weak-$(1,1)$ boundedness of maximal functions: setting $$E:=\{x\in \spt(\mu) : M_{\nu}(x)=\infty\},$$
	where $x$ denotes a point in the spacetime,
	we see that for any $\delta>0$, we can find balls $\bar B^P_{r_x}(x)$ about every point $x\in E$ for which
	$$\nu(\bar B^P_{r_x}(x))>\frac{1}{\delta}\mu(\bar B^P_{5r_x}(x)).$$
	Applying the Vitali covering lemma, we can find an (at most) countable subcollection $\{x_1,x_2,\ldots\}\subset E$ for which the balls $\bar B^P_{r_j}(x_j)$ are disjoint, but the dilated balls $\bar B^P_{5r_j}(x_j)$ cover $E$, so that
	\begin{align*}
		&\nu( M\times\R)\geq\sum \nu(\bar B^p_{r_j}(x_j))
		\geq\frac{1}{\delta}\sum \mu(\bar B^P_{5r_j}(x_j))
		\geq\frac{1}{\delta}\mu(E).
	\end{align*}
	On the other hand, integrating \cref{epsBrakke} (with $\phi=1$), we have
	\begin{align}\label{nu.en}&2\nu^\epsilon( M\times[0,\infty))\le\Lambda,\end{align}
	and hence $\nu( M\times\R)=\nu(M\times [0,\infty))<\infty$. Since $\delta>0$ was arbitrary, we deduce that $\mu(E)=0$, as desired.
\end{proof}

Next, we show that for points in $G$, the parabolic and (modified) Euclidean densities $\Theta^P(\mu,p,T)$ and $\tilde\Theta_{n-2}(\mu_T,p)$ coincide.

\begin{proposition}\label{eq.dens} If $(p,T)\in G$, then $\tilde\Theta_{n-2}(\mu_T,p)$ exists and $\Theta^P(\mu,p,T)=\tilde\Theta_{n-2}(\mu_T,p)$.
\end{proposition}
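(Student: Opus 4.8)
The plan is to exploit the defining bound $M_\nu(p,T)<\infty$ of the class $G$ to control the difference between the parabolic and Euclidean densities via Huisken's monotonicity formula, keeping careful track of the error terms. First I would recall from \cref{mono.prop} and \cref{mono.quantitative} that along the flow one has, for fixed $(p,\tau)$ with $\tau>T$, the quantitative inequality
$$
\frac{d}{ds}\Big(e^{C'\sqrt{\tau-s}}(\tau-s)\int_M h^{\tau}_s\,d\mu^{\epsilon}_s\Big)
\le -2(\tau-s)\int_M h^{\tau}_s\big(|\dot u+h_s^{-1}(\nabla_s)_{dh_s}u|^2+\epsilon^2|\dot\alpha+h_s^{-1}\iota_{dh_s}\omega_s|^2\big)+\frac{C'}{\sqrt{\tau-s}},
$$
where $h^{\tau}_s(x)=K(\tau-s,x,p)$. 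Passing to the limit $\epsilon\to 0$, the shrinker term produces a nonnegative Radon measure $\Xi^{\tau}$ (a limit of $2(\tau-s)h^{\tau}_s$ times the ``shrinker defect'' densities, which are dominated by $\nu^{\epsilon}$ up to constants depending on the Gaussian weight), so that
$$
\Psi^{\epsilon}_{\tau,p}(t_2)-\Psi^{\epsilon}_{\tau,p}(t_1)\le -\int_{t_1}^{t_2}(\tau-s)\,d(\text{shrinker defect})\,ds+2C'(\sqrt{\tau-t_1}-\sqrt{\tau-t_2}).
$$

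Next I would take $\tau=T+r^2$, $t_1=s$ fixed below $T$, and $t_2=T$, and then let $r\to 0$. On the one hand $\lim_{r\to 0}\Psi_{T+r^2,p}(T)=\tilde\Theta_{n-2}(\mu_T,p)/(4\pi)$ provided the limit exists; on the other $\Psi_{T+r^2,p}(s)\to\Psi_{T,p}(s)$, whose limit as $s\to T^-$ is $\Theta^P(\mu,p,T)/(4\pi)$. The one-sided monotonicity already gave $\Theta^P\ge\tilde\Theta_{n-2}$ (as noted before the proposition). To get the reverse inequality I would show the shrinker defect integrated against the backward heat kernels over a parabolic annulus $[T-\rho^2,T]$ tends to $0$ as $\rho\to 0$: this is exactly where the hypothesis $(p,T)\in G$ enters. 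The bound $\nu(\bar B^P_\rho(p,T))\le M_\nu(p,T)\,\mu(\bar B^P_{5\rho}(p,T))$, together with $\mu(\bar B^P_{5\rho}(p,T))\le C\rho^{n}$ (from \cref{ener.bd.bis} and the disintegration $\mu=\mu_t\otimes\mathcal L^1$ in \cref{clarity.spt}), controls the contribution of $\nu$ near $(p,T)$; and the Gaussian kernels $(\tau-s)h^{\tau}_s$ decay at the appropriate scale so that $\int_{T-\rho^2}^{T}(\tau-s)\int_M h^{\tau}_s\,d\nu_s \to 0$ as $\rho\to 0$ (dyadically decomposing into parabolic annuli and summing a geometric series against the Gaussian tail, as in the proof of \cref{clearing.values}). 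The contribution away from $(p,T)$ vanishes in the limit $r\to 0$ since the kernels $(\tau-s)h^{\tau}_s$ converge to $0$ uniformly on $\{d(x,p)\ge\rho\}$ for $\tau-s\le r^2\to 0$.

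Putting these together: for any $\delta>0$ choose $\rho$ so small that the shrinker-defect contribution over $[T-\rho^2,T]$ is at most $\delta$; then for $s\in(T-\rho^2,T)$ and $r$ small, $\Psi_{T+r^2,p}(T)\ge\Psi_{T+r^2,p}(s)-\delta-C\sqrt{T-s}-C r$. Letting $r\to 0$ then $s\to T^-$ then $\delta\to 0$ yields $\limsup_{r\to0}\Psi_{T+r^2,p}(T)\ge\Theta^P(\mu,p,T)/(4\pi)$, and combined with the reverse (lower-semicontinuity type) estimate $\liminf_{r\to 0}\Psi_{T+r^2,p}(T)\le\Theta^P(\mu,p,T)/(4\pi)$ coming from the one-sided monotonicity $\Psi_{T+r^2,p}(T)\le\Psi_{T+r'^2,p}(T)+C(\sqrt{r'}-\sqrt r)$-type comparisons, this shows both that $\tilde\Theta_{n-2}(\mu_T,p)$ exists and that it equals $\Theta^P(\mu,p,T)$. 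The main obstacle I anticipate is the careful bookkeeping in extracting a genuine nonnegative limiting ``shrinker defect'' measure from the $\epsilon$-level inequality and then showing its parabolic-annular mass near $(p,T)$ is controlled by $\nu$ in a way that the $G$-condition kills — i.e., making rigorous the passage from the $\epsilon$-level quantitative monotonicity to a clean statement about $\mu$, $\nu$, uniformly in the base point, rather than any single estimate.
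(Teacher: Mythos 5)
Your proposal and the paper's proof both aim to compare $\mu_{T-r^2}(\phi_r)$ with $\mu_T(\phi_r)$ (in your notation, $\Psi_{T+r^2,p}(s)$ near $s=T$ with $\Psi_{T+r^2,p}(T)$), but the paper takes a much more direct route: it applies the exact identity \cref{epsBrakke.bis} with $\phi=\phi_r$, uses Young's inequality to write $|\mu_s(\phi)-\mu_t(\phi)|\le C\int(|d\phi|\,d\mu+(\phi+|d\phi|)\,d\nu)$, and then uses the heat-kernel bounds from \cref{asympHK} together with the parabolic-ball estimates $\mu(\bar B_r^P(p,T))\le Cr^n$ (from \cref{ener.bd.bis}) and $\nu(\bar B_r^P(p,T))\le Cr^n$ (from $(p,T)\in G$) to bound the whole thing by $Cr$. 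No monotonicity formula, no shrinker defect, no lower bound on $\Psi'$ is needed.

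Your route via \cref{mono.quantitative} has a genuine logical gap. The quantitative monotonicity is still a one-sided inequality: $\Psi'(s)\le -2(T-s)\int h_s(\text{shrinker defect})+C/\sqrt{T-s}$. Integrating from $s$ to $T$ gives an upper bound on $\Psi(T)-\Psi(s)$ (hence $\tilde\Theta_{n-2}\le\Theta^P$, the easy direction already noted before the proposition), but it does not produce the lower bound $\Psi(T)\ge\Psi(s)-\delta-\cdots$ you invoke. To get that, you would need the reverse inequality $\Psi'(s)\ge -2(T-s)\int h_s(\text{shrinker defect})-C/\sqrt{T-s}$, which requires reversing \emph{all} the inequalities dropped en route — in particular the discrepancy contribution $\int h_s\,\xi_s(\epsilon|\omega|+\frac{1-|u|^2}{2\epsilon})$, which you never address. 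Smallness of the shrinker defect alone is insufficient.

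There is a second, independent gap: the shrinker defect is not dominated by $\nu^\epsilon$. Expanding $|\dot u+h^{-1}(\nabla)_{dh}u|^2$ produces a term $\sim |dh_s/h_s|^2\,|\nabla u|^2$, and $|dh_s/h_s|^2\sim (T-s)^{-1}+d(x,p)^2(T-s)^{-2}$ is unbounded near the base point. So the ``constants depending on the Gaussian weight'' you allude to are not constants at all — the shrinker defect is a $\mu^\epsilon$-type quantity with a singular weight, not a $\nu^\epsilon$-type quantity, and the $G$-condition (which controls $\nu$ relative to $\mu$) does not directly control it. The paper's approach sidesteps this entirely because the exact identity \cref{epsBrakke.bis} already writes $\frac{d}{dt}\mu_t^\epsilon(\phi)$ as a sum of terms that, after Cauchy--Schwarz, are controlled cleanly by $\mu^\epsilon$ and $\nu^\epsilon$ with the test-function weights $\phi,|d\phi|$.
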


\begin{proof} Writing
	$$\phi_r(x):=4\pi r^2 K(r^2,x,p),$$
	we have by definition
	$$\Theta^P(\mu,p,T)=\lim_{r\to 0}\mu_{T-r^2}(\phi_r)$$
	and
	$$\tilde\Theta_{n-2}(\mu_T,p)=\lim_{r\to 0}\mu_T(\phi_r)$$
	(provided that the limit exists). Hence, it suffices to show that
	$$\lim_{r\to 0}|\mu_{T-r^2}(\phi_r)-\mu_T(\phi_r)|=0$$
	for every $(p,T)\in G$.
	To this end, recall from \cref{epsBrakke} that
	\begin{align}\label{epsBrakke.bis}
		&\frac{d}{dt} \int_{M} \phi e^\epsilon(u_t, \nabla_t) 
		= -2\int_M [\phi (|\dot u_t|^2+\epsilon^2|\dot\alpha_t|^2)+\ang{\nabla_t u_t(d\phi),\dot u_t}+\epsilon^2\omega_t(d\phi,\dot\alpha_t)]
	\end{align}
	for any smooth function $\phi:M\to\R$, whence
	$$|\mu_s(\phi)-\mu_t(\phi)|\le C\int_{ M\times[s,t]}(|d\phi|\,d\mu+(|\phi|+|d\phi|)\,d\nu)$$
	for any two times $0\le s<t$ by Young's inequality. Using also \cref{asympHK}, it follows that
	\begin{align*}
		&|\mu_{T-r^2}(\phi_r)-\mu_T(\phi_r)| \\
		&\leq C\int_{ M\times[T-r^2,T]} (|d\phi_r|\,d\mu+(\phi_r+|d\phi_r|)\,d\nu)\\
		&\leq Cr^{2-n}\int_{ M\times[T-r^2,T]}\left(\frac{1}{r}+\frac{d(x,p)}{r^2}\right)e^{-d(x,p)^2/(4r^2)}\,[d\mu+d\nu](x,t)\\
		&\leq Cr^{1-n}\int_{\bar B_r^P(p,T)}\,[d\mu+d\nu]\\
		&\quad +Cr^{2-n}\int_{ (M\setminus B_r(p))\times[T-r^2,T]}\left(\frac{1}{r}+\frac{d(x,p)}{r^2}\right)e^{-d(x,p)^2/(4r^2)}\,[d\mu+d\nu]
	\end{align*}
	whenever $T-r^2\ge 0$, which holds for $r$ small enough.
	Since we know from \cref{ener.bd.bis} that $\mu_t(\bar B_r(p))\leq C(T)r^{n-2}$ for times $t\ge T/2$, clearly 
	$$\mu(\bar B_r^P(p,T))\leq Cr^n$$
	for $r$ small enough,
	and since $(p,T)\in G$, it follows that there is a different constant $C(p,T)$ such that
	$$\nu(\bar B_r^P(p,T))\leq Cr^n.$$
	Moreover, note that for any smooth nonnegative function $0\leq f\in C^{\infty}([0,\infty))$ with fast decay at infinity, writing $f(\lambda)=\int_\lambda^\infty(-f')(s)\,ds$ and applying Fubini's theorem we get
	\begin{align*}
		& \int_{ (M\setminus B_r(p))\times[T-r^2,T]} f(d(x,p))\,[d\mu+d\nu] \\
		& =\int_r^{\infty}(-f'(s))(\mu+\nu)( B_s(p)\times[T-r^2,T])\,ds - f(r)(\mu+\nu)( B_r(p)\times[T-r^2,T])\\
		&\leq \int_r^{\infty}|f'(s)|(\mu+\nu)( B_s(p)\times[T-r^2,T])\,ds\\
		&\leq \int_r^{\infty}|f'(s)|(\mu+\nu)(B_s^P(p,T))\,ds\\
		&\leq C(p,T)\int_r^{\infty}|f'(s)|s^n\,ds,
	\end{align*} 
	where we used the preceding bounds for $\mu(\bar{B}_r^P(p,T))$ and $\nu(\bar B_r^P(p,T))$ in the last inequality.
	In particular, applying this with $f(s):=\left(\frac1r+\frac{s}{r^2}\right)e^{-s^2/4r^2}$, and returning to the computation for $|\mu_{T-r^2}(\phi_r)-\mu_T(\phi_r)|$, we see that
	\begin{align*}
		|\mu_{T-r^2}(\phi_r)-\mu_T(\phi_r)|&\leq Cr+Cr^{2-n}\int_r^{\infty} \frac{e^{-s^2/4r^2}}{r^2}(1+s/r+(s/r)^2) s^n\,ds\\
		&\leq Cr+Cr^{-n}\int_1^{\infty}e^{-\sigma^2/4}(1+\sigma+\sigma^2)r^n\sigma^n\cdot r\,d\sigma\\
		&\leq Cr+Cr\int_1^{\infty}e^{-\sigma^2/4}(1+\sigma+\sigma^2)\sigma^n\,d\sigma\\
		&\leq Cr, 
	\end{align*}
	where we set $\sigma:=s/r$ in the second inequality, concluding the proof.
\end{proof}

\subsection{Generic tangent flows} \label{gen.tan.flows}
Let $\mathcal{T}_t^\epsilon$ be the stress-energy tensor of $(u_t^\epsilon,\nabla_t^\epsilon)$ and let
$$d\mathcal{T}:=\lim_{\epsilon\to 0}\mathcal{T}_t^\epsilon\,\dvol_g\otimes dt$$ be the (subsequential) limit, which is a measure with values into symmetric endomorphisms.
Since $|\mathcal{T}|\le C\mu$ on the spacetime, we can write
$$ d\mathcal{T}=A(x,t)\,d\mu(x,t) $$
for a bounded Borel function $A(x,t)\in\operatorname{Sym}(T_xM)$.

Let $G'\subseteq G$ be the set of points $(p,T)\in G$ at which $A$ is $\mu$-approximately continuous, with respect to the parabolic metric, and let
$$G_T':=\{p:(p,T)\in G'\}.$$
Recalling \cref{G.ae}, we have that $(p,T)\in G'$ for $\mu$-a.e.\ $(p,T)\in M\times(0,\infty)$.
Since $d\mu= d\mu_t\otimes dt$ on $M\times(0,\infty)$, we have $\mu_T(M\setminus G_T')=0$ for a.e.\ $T>0$. In what follows, we fix one such $T>0$, and prove that $\mu_T$ is $(n-2)$-rectifiable. (Rectifiability could also be obtained via the celebrated results of Preiss \cite{Preiss}, as in \cite{BethuelOrlandiSmets}, but we opt for a more direct, geometric approach, which sheds more light on the structure of the limiting measure.)

Fix now $p\in G_T'$.
Since $(p,T)\in G\subseteq\spt(\mu)$, using \cref{clearing.spt} we see that
$$ \liminf_{r\to 0}\frac{\mu_{T-c_Sr^2}(B_r(p))}{r^{n-2}}>0. $$
In view of \cref{asympHK}, which gives $K(c_Sr^2,x,p)\ge cr^{-n}$ on $B_r(p)$ (for a suitable $c(M)>0$), we deduce that
$$ \tilde\Theta_{n-2}(\mu_T,p)=\Theta^P(\mu,p,T)\ge(4\pi c_S)\liminf_{r\to 0}\frac{\mu_{T-c_Sr^2}(B_r(p))}{r^{n-2}}\ge c(M),$$
for a possibly different constant $c(M)>0$.
Also, recalling \cref{mod.theta.bis}, an integration by parts (as in the proof of \cref{eq.dens}) shows that
\begin{align}\label{mod.theta.trix}
	&\tilde\Theta_{n-2}(\mu_T,p)=\frac{1}{2(4\pi)^{(n-2)/2}}\lim_{r\to 0}\int_0^\infty \frac{\mu_T(B_{r\sigma}(p))}{(r\sigma)^{n-2}}\sigma^{n-1}e^{-\sigma^2/4}\,\frac{d\sigma}{\sigma}.
\end{align}
Using the bound $\mu_T(B_r)\le C(M,\Lambda,T)r^{n-2}$, guaranteed by \cref{ener.bd.bis}, as well as the fact that $\tilde\Theta_{n-2}(\mu_T,p)\ge c(M)$, we deduce from the previous expression that we must also have the lower bound
\begin{align}\label{lower.area.bd}&\liminf_{r\to 0}\frac{\mu_T(B_r(p))}{r^{n-2}}\ge c(M,\Lambda,T)>0.\end{align}
Using normal coordinates around $p$, we can identify
a small ball $B_\delta(p)$ with $B_\delta(0)\subset\R^n$, endowed with a metric $g$ with $g_{ij}(0)=\delta_{ij}$.

Let $\operatorname{Tan}_{n-2}(\mu_T,p)$ be the (nonempty) set of tangent measures at $p$, namely limits of the form
$$ \lim_{\ell\to\infty}s_\ell^{2-n}(\delta_{s_\ell})_*\mu_T,\quad\text{for a sequence }s_\ell\to 0, $$
where $\delta_s(x):=s^{-1}x$ is the usual dilation in the Euclidean space.
We will prove the following result.

\begin{proposition}\label{tan.flat}
	For $p\in G_T'$, the endomorphism $A(p,T)$ is the orthogonal projection onto a plane $P_0$, and moreover there is a unique tangent measure, given by
	$$\operatorname{Tan}_{n-2}(\mu_T,p)=\{\Theta^P(\mu,p,T)\cdot\mathcal{H}^{n-2}\mres P_0\}.$$
\end{proposition}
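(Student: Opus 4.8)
The plan is to extract a tangent flow from the parabolic rescalings at $(p,T)$, use the monotonicity formula of \cref{mono.prop} to show this tangent flow is \emph{static} (the backward-heat Gaussian weight is constant in time on it), deduce it is a \emph{stationary} generalized varifold via the first-variation identity coming from \cref{div.id}, and finally invoke the rigidity theory of \cite{AmbrosioSoner} to conclude the tangent measure is a multiple of a single $(n-2)$-plane. Uniqueness of the tangent measure then follows because the density $\tilde\Theta_{n-2}(\mu_T,\cdot)$ equals the parabolic density $\Theta^P(\mu,p,T)$ (by \cref{eq.dens}, as $p\in G_T'\subseteq G$), so the Gaussian-weighted integral against $\phi_r$ has a limit as $r\to 0$, which pins down the "mass" of every tangent measure; combined with the plane structure and the fact that $A(p,T)$ is the approximate-continuity value of the stress-energy ratio, the plane $P_0$ is the same along every rescaling sequence.

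Concretely, first I would set up the blow-up: fix normal coordinates identifying $B_\delta(p)$ with $B_\delta(0)\subset\R^n$, and for $s\to 0$ consider the parabolically rescaled pairs $(u^{\epsilon,s}_\tau,\nabla^{\epsilon,s}_\tau)(x):=(u^\epsilon_{T+s^2\tau}(sx),\ldots)$ on the rescaled bundle, which solve \cref{gf} with parameter $\epsilon/s$; the associated energy measures are $s^{2-n}(\delta_s)_*\mu^\epsilon_{T+s^2\tau}$. Using \cref{ener.bd.bis} and the discrepancy bound \cref{disc.refined} (which rescales favorably), together with the refined monotonicity in \cref{mono.quantitative} evaluated at $(p,T)$, one sees that the "shrinker error" $\nu$-mass in parabolic balls around $(p,T)$ is controlled — precisely the content of $(p,T)\in G$ — so that along a suitable diagonal subsequence $\epsilon_i\to 0$, $s_\ell\to 0$ the rescaled energy measures converge (in spacetime) to a limiting Brakke flow $(\bar\mu_\tau)$ on $\R^n$ that is \emph{backward self-similar}: the monotone quantity $\Psi$ is constant, hence the shrinker term \cref{shrinker.term} vanishes in the limit. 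By the clearing-out lower bound \cref{lower.area.bd} and the upper bound $\mu_T(B_r)\le Cr^{n-2}$, the limit is nontrivial with density pinned by $\Theta^P(\mu,p,T)$.

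Next I would show the time-slice $\bar\mu_0$ (equivalently, every slice, since the flow is static up to the self-similar scaling, and the Gaussian density being constant forces the flow to be a \emph{stationary cone}) is a stationary integral-type generalized varifold: taking $\epsilon\to 0$ in the first-variation identity extracted from \cref{div.id} and \cref{epsBrakke}, the transport term $\divergence(\mathcal{T}^\epsilon)(d\phi)$ passes to $\int \langle d\phi, A\,d\bar\mu\rangle$ with $A$ the (approximately continuous at $(p,T)$, hence now \emph{constant}) limiting stress-energy density, and the shrinking term vanishes, giving $\delta\bar V = 0$ in the generalized-varifold sense of \cite{AmbrosioSoner}. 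The rectifiability and rigidity results of \cite{AmbrosioSoner} (together with the cone structure from self-similarity and the constant density) then force $\bar\mu_0 = \Theta^P(\mu,p,T)\,\mathcal{H}^{n-2}\mres P_0$ for an $(n-2)$-plane $P_0$, and identify $A(p,T)$ with the orthogonal projection onto $P_0$. Finally, uniqueness: since $\tilde\Theta_{n-2}(\mu_T,p)$ exists as a genuine limit, the total Gaussian mass of any tangent measure is $\Theta^P(\mu,p,T)$, and the value $A(p,T)$ is independent of the sequence (it is the approximate-continuity value of $A$ at $(p,T)$), so the plane $P_0 = \mathrm{range}(A(p,T))$ is the same for every tangent measure, yielding $\operatorname{Tan}_{n-2}(\mu_T,p)=\{\Theta^P(\mu,p,T)\cdot\mathcal{H}^{n-2}\mres P_0\}$.

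The main obstacle I anticipate is making the blow-up limit of the \emph{diffuse} $U(1)$-Higgs flow genuinely stationary — i.e., controlling the loss of compactness and ensuring that the limiting measure is not merely a static Brakke flow but an honest stationary generalized varifold whose first variation vanishes identically. This requires carefully combining the $L^2$-smallness of $\nu$ at generic points (the set $G$), the uniform discrepancy bound $\xi_t\le C$ from \cref{disc.refined} (which controls the difference between the stress-energy transport term and what one would get from an actual varifold), and the $\mu$-approximate continuity of $A$ at $(p,T)$ to upgrade "static" to "stationary cone"; the secondary technical point is checking that the rigidity input from \cite{AmbrosioSoner} applies, which needs the limiting generalized varifold to have bounded density ratios and the correct lower density bound, both of which are supplied by \cref{ener.bd.bis}, \cref{clearing.spt}, and \cref{lower.area.bd}.
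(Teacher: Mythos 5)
Your proposal follows essentially the same strategy as the paper: parabolic rescaling at the generic point, a diagonal subsequence exploiting $\nu$-smallness at $(p,T)\in G$ to produce a static limit, passing to the limit in \cref{div.id} to show the measure $A(p,T)\hat\mu$ is a generalized stationary varifold (using the discrepancy bound \cref{disc.refined}/\cref{scaled.bounds} for the trace lower bound), and then invoking the rectifiability theory of \cite{AmbrosioSoner}. That much is aligned with the paper's \cref{diagonal.pairs} and \cref{prop.stat.var}.

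However, there are two genuine gaps in the final structural step, where you pass from ``rectifiable stationary varifold with constant tangent plane'' to ``a single multiplicity-$\theta_0$ plane''. First, you appeal to ``the cone structure from self-similarity and the constant density'', but you never actually establish that the density ratio $\hat\mu(B_r(0))/(\omega_{n-2}r^{n-2})$ is \emph{constant} in $r$ (equivalently, that the stationary varifold is a cone). This does not follow from the tangent-flow being static; the paper derives it from the existence of the averaged Euclidean density $\tilde\Theta_{n-2}(\mu_T,p)$ (\cref{eq.dens}) combined with a nontrivial dominated-convergence argument using the Gaussian-averaged density formula \cref{mod.theta.trix}, which identifies $\tilde\Theta_{n-2}(\hat\mu,0)$ with $\tilde\Theta_{n-2}(\mu_T,p)$ and then forces $r\mapsto\hat\mu(B_r)/r^{n-2}$ to equal this constant because it is monotone with constant average. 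Second, even granting the constant density ratio, you still need to show the stationary varifold $V=A(p,T)\hat\mu$ is supported \emph{in} the plane $P_0$, not merely that its approximate tangent plane equals $P_0$ a.e.\ (the latter would be consistent with, say, a union of disjoint parallel planes, for which $0$ would not lie in the support with full density). The paper closes this with a cutoff trick: for $\eta\in C^1_c(\R^2)$ with $\eta(0)=1$, $0\le\eta\le1$, the weighted varifold $V_\eta$ obtained by multiplying the density by $\eta(z)$ is still stationary (since $d\eta\perp P_0$), and comparing its monotonicity ratio to that of $V$ forces $V_\eta=V$, hence $\spt V\subseteq\{\eta=1\}$ for all such $\eta$, i.e., $\spt V\subseteq P_0$. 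Your invocation of ``rigidity results of \cite{AmbrosioSoner}'' does not supply either of these steps; that theorem only yields rectifiability (and hence that $A(p,T)$ is a projection), not the plane structure. Relatedly, defining $P_0$ as $\mathrm{range}(A(p,T))$ before you know $A(p,T)$ is a rank-$(n-2)$ projection is circular.

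A minor secondary point: the definition of $G$ controls the $\nu$-mass (stationarity deficit), not the shrinker deficit appearing in \cref{mono.quantitative}; these are distinct quantities, so your phrase ``the shrinker error $\nu$-mass in parabolic balls\ldots{} precisely the content of $(p,T)\in G$'' conflates them. What $(p,T)\in G$ gives directly is that the tangent flow is \emph{static} (time-independent), not that it is a shrinker; the cone/self-similar structure must come from the density-ratio argument described above.
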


Since this holds for all $p\in G_T'$ and $\mu_T(M\setminus G_T')=0$, the following is an immediate consequence, by the well-known rectifiability criterion \cite[Theorem~11.8]{SimonGMT}.

\begin{corollary}\label{rect.coroll}
    The measure $\mu_T$ is $(n-2)$-rectifiable.
\end{corollary}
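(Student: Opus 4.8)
The corollary is an immediate consequence of \cref{tan.flat} together with a standard rectifiability criterion, so the plan is merely to assemble the pieces. Since $(n-2)$-rectifiability of a Radon measure is a local property and is invariant under smooth changes of coordinates, it suffices to cover $M$ by finitely many normal coordinate charts and to check that $\mu_T$ restricted to each chart is $(n-2)$-rectifiable; inside such a chart a neighborhood of each $p\in G_T'$ is identified with a neighborhood of the origin in $\R^n$, exactly as in the setup preceding \cref{tan.flat}, so that the Euclidean dilations $\delta_s$ and the tangent measures $\operatorname{Tan}_{n-2}(\mu_T,\cdot)$ are defined.

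First I would verify that the hypotheses of the rectifiability criterion \cite[Theorem~11.8]{SimonGMT} are met at every $p\in G_T'$. The density bounds are already in hand: \cref{ener.bd.bis} gives the upper bound $\mu_T(B_r(p))\le C(M,\Lambda,T)\,r^{n-2}$ for all $r>0$, while \cref{lower.area.bd} gives $\liminf_{r\to0}\mu_T(B_r(p))/r^{n-2}\ge c(M,\Lambda,T)>0$, so the $(n-2)$-density of $\mu_T$ at $p$ is positive and finite (and in fact exists, by \cref{eq.dens} and \cref{tan.flat}). Moreover, \cref{tan.flat} asserts that $\operatorname{Tan}_{n-2}(\mu_T,p)$ consists of the single measure $\Theta^P(\mu,p,T)\cdot\mathcal{H}^{n-2}\mres P_0$, i.e.\ a constant multiple of Hausdorff measure on an $(n-2)$-plane; in particular the tangent measure is flat, which is precisely the condition required by the criterion.

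Finally, the choice of $T$ guarantees that $\mu_T(M\setminus G_T')=0$, so the two properties above hold at $\mu_T$-a.e.\ point of every chart; applying \cite[Theorem~11.8]{SimonGMT} in each chart then yields that $\mu_T$ is $(n-2)$-rectifiable, as claimed. There is no genuine obstacle at this stage: all of the substantive work — the refined discrepancy and energy-density bounds of \cref{secEpsIneq}, the Huisken-type monotonicity of \cref{mono.prop}, the clearing-out and decay estimates of \cref{decay.sec}, and the blow-up analysis of \cref{tan.flat} identifying the unique tangent measure at points of $G_T'$ — has already been carried out, and what remains is simply to quote the measure-theoretic rectifiability theorem. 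The only point requiring mild care is matching the exact hypotheses of the cited criterion to the conclusion of \cref{tan.flat}, which is unproblematic since the latter delivers the strongest possible structure (a unique, flat tangent measure with positive finite multiplicity) at $\mu_T$-a.e.\ point.
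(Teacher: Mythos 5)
Your proposal is correct and follows essentially the same route as the paper: the paper deduces the corollary immediately from \cref{tan.flat} (unique flat tangent measure of positive, finite multiplicity at every $p\in G_T'$, with $\mu_T(M\setminus G_T')=0$) together with the rectifiability criterion \cite[Theorem~11.8]{SimonGMT}. Your additional remarks on localizing in normal coordinate charts and on the density bounds from \cref{ener.bd.bis} and \cref{lower.area.bd} just make explicit the hypotheses the paper leaves implicit.
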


To begin the proof of \cref{tan.flat}, from now on we fix a tangent measure $\hat\mu\in\operatorname{Tan}_{n-2}(\mu_T,p)$. In order to study $\hat\mu$, we study a tangent flow whose energy at time $0$ converges to $\hat\mu$.

On the ball $B_\delta(p)=B_\delta(0)$ we can trivialize our line bundle, viewing $u_t=u_t^\epsilon$ as a $\C$-valued map and writing $\nabla_t=\nabla_t^\epsilon=d-i\alpha_t^\epsilon$.
Given $s\in(0,1)$, on the dilated ball $B_{\delta/s}(0)$, with the corresponding metric $(g^s)_{ij}:=g_{ij}(s\cdot)$,
we consider the new pair $(\tilde{u}_t^{\epsilon/s},\widetilde\nabla_t^{\epsilon/s})$ obtained from the parabolic rescaling
\begin{align}\label{resc.pair}&(\tilde{u}_t^{\epsilon/s},\tilde\alpha_t^{\epsilon/s})(x,t):=(u_t^\epsilon,s\alpha_t^\epsilon)(sx,T+s^2t),\end{align}
which solves the gradient flow equations with $\tilde\epsilon:=\epsilon/s$ in place of $\epsilon$.
Note that we have the scale invariance
$$\int\varphi(sx)\,d\tilde\mu_t^{\tilde\epsilon}(x)=s^{2-n}\int\varphi(x')\,d\mu_{T+s^2t}^\epsilon(x')$$
for any $\varphi\in C^0_c(B_{\delta})$, where $\tilde\mu_t^{\tilde\epsilon}$ is the energy density of the rescaled pair (computed in the rescaled metric).

\begin{proposition}\label{diagonal.pairs}
	Along a (not relabeled) subsequence $s_\ell\to 0$,
	there exists an assignment $s_\ell\mapsto\epsilon(s_\ell)$ such that $\epsilon(s_\ell)/s_\ell\to 0$ and the rescaled pairs
	$$ (\tilde{u}_t^{\epsilon(s_\ell)/s_\ell},\tilde\alpha_t^{\epsilon(s_\ell)/s_\ell})(x,t) $$
	satisfy the following as $\ell\to\infty$:
	their energy densities
	$$\tilde\mu_t^{\tilde\epsilon}\rightharpoonup^*\hat\mu$$
	for all $t\in\R$, as measures on $\R^n$, the shrinking measures
	\begin{align}\label{vanishing.nu.bis} &(|\dot{\tilde u}|^2+\tilde\epsilon^2|\dot{\tilde\alpha}|^2)\,dx\otimes dt\rightharpoonup^* 0, \end{align}
	and the stress-energy tensors
	$$ \widetilde{\mathcal{T}}_t^{\tilde\epsilon}\,dx\otimes dt\rightharpoonup^* A(p,T)\,d\hat\mu(x)\otimes dt, $$
	where the last two limits hold in the sense of measures on $\R^n\times \R$ (abusing notation, we let $\tilde\epsilon:=\epsilon(s_\ell)/s_\ell$, and we often drop the subscript $t$ and the superscript $\tilde\epsilon$ when no ambiguity arises).
\end{proposition}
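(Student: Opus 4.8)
The strategy is a diagonal argument over the two scales $s=s_\ell\to 0$ and $\tilde\epsilon=\epsilon/s\to 0$. For fixed $s$ the $\epsilon\to 0$ behaviour of the rescaled pairs is already pinned down by the convergences established earlier — \cref{CorLimMeas} for the energy measures, and the definitions of $\nu$ and $\mathcal T$ — so the real content is the $s\to 0$ limit, after which one interleaves the two parameters.

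First I would record the elementary scaling identities for \cref{resc.pair}: in the rescaled metric $g^s=g(s\,\cdot)$, which converges smoothly to the Euclidean metric on compact sets, one has $\tilde e^{\tilde\epsilon}(x,t)=s^2e^\epsilon(sx,T+s^2t)$, $\widetilde{\mathcal T}^{\tilde\epsilon}_t(x,t)=s^2\mathcal T^\epsilon_t(sx,T+s^2t)$ (up to $O(s)$ errors from the identification of tangent spaces), and $|\dot{\tilde u}|^2+\tilde\epsilon^2|\dot{\tilde\alpha}|^2=s^4(|\dot u|^2+\epsilon^2|\dot\alpha|^2)(sx,T+s^2t)$. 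Consequently, for each fixed $t$ the rescaled energy measure is $\tilde\mu^{\tilde\epsilon}_t=s^{2-n}(\delta_s)_*\mu^\epsilon_{T+s^2t}$ and $\widetilde{\mathcal T}^{\tilde\epsilon}_t\,\dvol_{g^s}=s^{2-n}(\delta_s)_*(\mathcal T^\epsilon_{T+s^2t}\,\dvol_g)$, while the spacetime shrinking measure of the rescaled pair has total mass $s^{2-n}\nu^\epsilon(\bar B^P_{Rs}(p,T))$ on $\bar B^P_R(0,0)$. Letting $\epsilon\to 0$ with $s$ fixed, \cref{CorLimMeas} and the definitions of $\nu$ and $\mathcal T$ then give the weak-$*$ limits $\tilde\mu^{\tilde\epsilon}_t\rightharpoonup^* \hat\mu^{(s)}_t:=s^{2-n}(\delta_s)_*\mu_{T+s^2t}$ for every $t$, $(|\dot{\tilde u}|^2+\tilde\epsilon^2|\dot{\tilde\alpha}|^2)\,dx\otimes dt\rightharpoonup^* \hat\nu^{(s)}$, and $\widetilde{\mathcal T}^{\tilde\epsilon}_t\,dx\otimes dt\rightharpoonup^* \hat A^{(s)}\,d\hat\mu^{(s)}_t\otimes dt$, where $\hat\nu^{(s)}$, $\hat A^{(s)}$ are the corresponding rescalings of $\nu$ and $A$.

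Next I would send $s=s_\ell\to 0$. The uniform bounds of \cref{ener.bd.bis} and the semi-decreasing property \cref{semidec} — whose constants stay bounded since $g^{s}\to\delta$ — make the families $t\mapsto\hat\mu^{(s_\ell)}_t$ precompact, so along a subsequence $\hat\mu^{(s_\ell)}_t\rightharpoonup^*\hat\mu_t$ for every $t$, with $\hat\mu_0=\hat\mu$ by the choice of the $s_\ell$. Since $(p,T)\in G\subseteq\spt\mu$, the maximal-function bound defining $G$ together with $\mu(\bar B^P_r(p,T))\le Cr^n$ (obtained in the proof of \cref{eq.dens}) yields $\nu(\bar B^P_r(p,T))\le C(p,T)r^n$, so $\hat\nu^{(s_\ell)}(\bar B^P_R(0,0))=s_\ell^{2-n}\nu(\bar B^P_{Rs_\ell}(p,T))\le C R^n s_\ell^2\to 0$ and hence $\hat\nu^{(s_\ell)}\rightharpoonup^* 0$. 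To identify the slices $\hat\mu_t$ I would combine \cref{epsBrakke} with \cref{div.id}: the latter gives $|\divergence\widetilde{\mathcal T}^{\tilde\epsilon}_t|\le C\sqrt{\tilde e^{\tilde\epsilon}}\,(|\dot{\tilde u}|^2+\tilde\epsilon^2|\dot{\tilde\alpha}|^2)^{1/2}$, so $\int\phi\,|\divergence\widetilde{\mathcal T}^{\tilde\epsilon}|\le C(\int\phi\,\tilde\mu^{\tilde\epsilon})^{1/2}\hat\nu^{(s_\ell)}(\spt\phi)^{1/2}\to 0$; therefore the spacetime limit of $\widetilde{\mathcal T}^{\tilde\epsilon}_t\,dx\otimes dt$ — which equals $A(p,T)\,d\hat\mu_t\otimes dt$, the constant endomorphism $A(p,T)$ coming from approximate continuity of $A$ at $(p,T)$ (i.e.\ from $(p,T)\in G'$) and the weight $d\hat\mu_t$ from the previous step — is divergence-free in the spatial variables, and then the limit of \cref{epsBrakke} reads $\frac{d}{dt}\hat\mu_t(\phi)=0$ for every $\phi$. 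Thus $\hat\mu_t\equiv\hat\mu_0=\hat\mu$, which gives the first and third assertions simultaneously.

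Finally I would make the diagonal precise: fixing a metric for weak-$*$ convergence on compacta, a countable dense set of test functions, and a countable dense set of times, for each $\ell$ one picks $\epsilon(s_\ell)$ small — in particular with $\epsilon(s_\ell)/s_\ell\to 0$ — so that the three rescaled quantities at $(s_\ell,\epsilon(s_\ell))$ approximate $\hat\mu^{(s_\ell)}_t$, $\hat\nu^{(s_\ell)}$ and $\hat A^{(s_\ell)}\,d\hat\mu^{(s_\ell)}_t\otimes dt$ to within $1/\ell$; combined with the $s\to 0$ limits above this produces the three conclusions, with the ``for all $t$'' in the first one following from the semi-decreasing property and the $t$-independence of $\hat\mu$. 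The main obstacle is this last point — that the blow-up is static — which is the only place where the genericity of $(p,T)$ is used in an essential way: vanishing of the rescaled shrinking measures (from $(p,T)\in G$) forces the limiting stress-energy tensor to be divergence-free, and hence the limiting energy measure to be time-independent, while approximate continuity of $A$ at $(p,T)$ (the defining property of $G'$) is what makes the limiting stress-energy density the \emph{constant} endomorphism $A(p,T)$ times $\hat\mu$. Carrying the $O(s)$ metric errors uniformly on the relevant compact sets is routine bookkeeping but must be tracked throughout.
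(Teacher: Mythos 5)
Your proposal is correct and follows essentially the same route as the paper: extract the parabolic spacetime limits of the rescaled energy density, shrinking measure, and stress-energy tensor using the same ingredients (the $\mu(\bar B_r^P(p,T))\le Cr^n$ bound from \cref{ener.bd.bis}, the defining maximal-function bound of $G$ to kill the shrinking measure, approximate continuity of $A$ to identify the constant stress-energy density), run a diagonal argument in the two scales, and then use the semi-decreasing property plus the vanishing of $\tilde\nu^{\tilde\epsilon}$ to show the time-slices $\tilde\mu_t$ are constant in $t$. The only organizational difference is your two-step structure (first $\epsilon\to0$ at fixed $s$, then $s\to0$) versus the paper's single diagonal, and your time-independence argument passes through the vanishing of $\divergence(\widetilde{\mathcal T}^{\tilde\epsilon})$ whereas the paper integrates \cref{epsBrakke.bis} directly over $[0,t]$ and applies Cauchy--Schwarz — the latter avoids having to justify commuting the time derivative with the weak-$*$ limit, so you should phrase your argument in integrated form to be airtight, but the idea is the same. (Your scaling exponent $s^4$ for $|\dot{\tilde u}|^2+\tilde\epsilon^2|\dot{\tilde\alpha}|^2$ is the correct one; the displayed formula in the paper's proof has $s^2$, which is a typo, though the ensuing inequality there is consistent with $s^4$.)
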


In the sequel, we will denote by $\widetilde{\mathcal{T}}$ the $\operatorname{Sym}_n$-valued measure given by
$$ d\widetilde{\mathcal{T}}:=A(p,T)\,d\hat\mu(x)\otimes dt. $$

\begin{remark}\label{scaled.bounds}
	Some bounds from the previous sections relied on global arguments, exploiting compactness of $M$ (e.g., bounding the discrepancy $\xi^\epsilon\le C$). However, these bounds still hold for the rescaled pairs, simply by scale invariance; actually, some of them improve: for instance, the discrepancy of the rescaled pair satisfies $\tilde\xi^{\tilde\epsilon}\le Cs_\ell$.
\end{remark}

\begin{proof}[Proof of \cref{diagonal.pairs}]
	By definition of tangent measure, we have
	$$ s^{2-n}(\delta_s)_*\mu_T\rightharpoonup^*\hat\mu $$
	along a suitable sequence $s=s_\ell\to 0$.
	Since $\mu(B_s^P(p,T))\le Cs^n$ for $s$ small enough (by \cref{ener.bd.bis}), up to subsequences we have
	$$ s^{-n}(\delta_s^P)_*\mu\rightharpoonup^*\tilde\mu, $$
	for a suitable Radon measure $\tilde\mu$ on $\R\times\R^n$, where $\delta_s^P(x,t):=(s^{-1}x,s^{-2}t)$ is the parabolic dilation. By approximate continuity of the density $A=\frac{d\mathcal{T}}{d\mu}$ at $(p,T)$, we have
	$$ s^{-n}(\delta_s^P)_*\mathcal{T}\rightharpoonup^* A(p,T)\tilde\mu. $$
	Also, for fixed $s\in(0,1)$ and $k\in\{1,2,\dots\}$, note that the pair \cref{resc.pair} satisfies
	$$ (|\dot{\tilde u}|^2+\tilde\epsilon^2|\dot{\tilde\alpha}|^2)(x,t)
	=s^2(|\dot{u}|^2+\epsilon^2|\dot{\alpha}|^2)(sx,T+s^2t), $$
	where we use the metric $g^s$ in the left-hand side and $g$ in the right-hand side, and hence
	$$ \int_{B_k(0)\times [-k,k]}(|\dot{\tilde u}|^2+\tilde\epsilon^2|\dot{\tilde\alpha}|^2)\,dt\, \dvol_{g^s}
	\le s^{2-n}\nu^\epsilon(B_{ks}(p)\times[T-ks^2,T+ks^2]). $$
	In the limit $\epsilon\to 0$, assuming $s$ is so small that $ks^2<T/2$, the last measure is bounded by
	$$ \nu(B_{ks}(p)\times[T-ks^2,T+ks^2])\le \nu(\bar B^P_{ks}(p,T))\le C\mu(\bar B^P_{5ks}(p,T))\le C(ks)^n $$
	(in view of the definition of $G$ and \cref{ener.bd.bis}),
	and we deduce that
	\begin{align}\label{vanishing.nu} &\limsup_{\epsilon\to 0}\int_{B_k(0)\times[-k,k]}(|\dot{\tilde u}|^2+\tilde\epsilon^2|\dot{\tilde\alpha}|^2)\,dt\,dx\le Ck^ns^2. \end{align}
	A standard diagonal argument now gives a function $\epsilon(s)$ satisfying the first claim for $t=0$, as well as the second one, the third one with limit measure $A(p,T)\tilde\mu$,
	and the convergence of the energy densities $\tilde\mu^{\tilde\epsilon}\rightharpoonup^*\tilde\mu$ (in the spacetime) to the parabolic tangent measure $\tilde\mu$.
	
	By the semi-decreasing property, we can also assume that $\tilde\mu_t^{\tilde\epsilon}\rightharpoonup^*\tilde\mu_t$ for all $t\in\R$, and write
	$$d\tilde\mu= d\tilde\mu_t\otimes dt.$$
	Using \cref{epsBrakke.bis} for the rescaled pairs, together with Cauchy--Schwarz, we get
	$$ |\tilde\mu_0^{\tilde\epsilon}(\phi)-\tilde\mu_t^{\tilde\epsilon}(\phi)|\le C\int_{\R^n\times[0,t]}|\phi|\,d\tilde\nu^{\tilde\epsilon}
	+C\left(\int_{\R^n\times[0,t]}|d\phi|\,d\tilde\mu^{\tilde\epsilon}\right)^{1/2}\left(\int_{\R^n\times[0,t]}|d\phi|\,d\tilde\nu^{\tilde\epsilon}\right)^{1/2} $$
	(with $[0,t]$ replaced by $[t,0]$ if $t<0$), and recalling that $\tilde\nu^{\tilde\epsilon}\rightharpoonup^* 0$ by \cref{vanishing.nu.bis}, we see that in fact
	$$\tilde\mu_t=\tilde\mu_0=\hat\mu$$
	for all $t\in\R$,
	showing the first claim in full. It also follows that $A(p,T)\tilde\mu=\widetilde{\mathcal{T}}$, so that the third claim is also settled.
\end{proof}

\subsection{Proof of rectifiability}
We are left to prove \cref{tan.flat}. Let $\varphi\in C^1_c(\R)$ be a nonnegative function with $\int\varphi=1$.
Passing to the limit in \cref{div.id}, applied with the rescaled pairs, and recalling \cref{vanishing.nu.bis}, we see that
$$ 0=\lim_{r\to 0}\int_{\R^n\times\R}\ang{\varphi(t)DV(x),\widetilde{\mathcal{T}}_t^{\tilde\epsilon}}\,dx\otimes dt
=\int_{\R^n\times\R}\varphi(t)DV(x)\,d\widetilde{\mathcal{T}}(x,t) $$
(with the implicit pairing between symmetric endomorphisms in the last integral),
for all vector fields $V\in C^1_c(\R^n)$.
Since $d\widetilde{\mathcal{T}}=A(p,T)\,d\hat\mu\otimes dt$, we obtain
$$ \divergence(A(p,T)\hat\mu)=0$$
in the weak sense.

We now show that, actually, $A(p,T)\hat\mu$ defines a generalized stationary $(n-2)$-varifold (see \cite[eq.~(6.10)]{PigatiStern} for the definition used here).

\begin{proposition} \label{prop.stat.var}
	The measure $A(p,T)\hat\mu$ defines a generalized stationary $(n-2)$-varifold with weight $\hat\mu$.
\end{proposition}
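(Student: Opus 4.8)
The plan is to upgrade the weak identity $\divergence(A(p,T)\hat\mu)=0$ to the stronger statement that the pair $(A(p,T),\hat\mu)$ satisfies the first-variation identity defining a generalized stationary $(n-2)$-varifold, i.e.\ that for all $X\in C^1_c(\R^n;\R^n)$ one has $\int \langle A(p,T), DX\rangle\,d\hat\mu = 0$ with the correct normalization of traces, which is exactly what the divergence-free condition gives once we know $A(p,T)$ lies in the right symmetric cone. So the first step is to check the pointwise algebraic constraints on $A(p,T)$: recalling the definition of the stress-energy tensor $\mathcal{T}^\epsilon_t = e^\epsilon g - 2\nabla_t u_t^*\nabla_t u_t - 2\epsilon^2\omega_t^*\omega_t$, we have $0\le 2\nabla u^*\nabla u \le |\nabla u|^2 g \le e^\epsilon g$ and likewise $0 \le 2\epsilon^2\omega^*\omega \le 2\epsilon^2|\omega|^2 g$, and the trace satisfies $\tr(\mathcal{T}^\epsilon_t) = n\,e^\epsilon - 2|\nabla u|^2 - 4\epsilon^2|\omega|^2$. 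Using the discrepancy bound $\xi_t \le Cs_\ell \to 0$ for the rescaled pairs (Remark \ref{scaled.bounds}), the curvature and potential densities become asymptotically equal, and one shows $2|\nabla u|^2 + 4\epsilon^2|\omega|^2 \le 2 e^\epsilon + o(1)\sqrt{e^\epsilon}$, so that $\tr(\mathcal{T}^\epsilon_t) \ge (n-2)e^\epsilon - o(1)\sqrt{e^\epsilon}$; dividing by $e^\epsilon$ and passing to the limit (here one needs $\sqrt{e^\epsilon}/e^\epsilon$ to be controlled in an integrated sense, via Cauchy--Schwarz against the energy measure as in \cref{mono.prop}) gives $\tr A(p,T) \ge n-2$ and $0 \le A(p,T) \le g$ for $\hat\mu$-a.e.\ point, hence $A(p,T)\hat\mu$ is a generalized $(n-2)$-varifold in the sense of $A_{n,n-2}(\R^n)$.

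Next, I would verify the stationarity. The identity already derived, namely $\divergence(A(p,T)\hat\mu)=0$ in the distributional sense, says precisely that $\int \langle A(p,T), DX\rangle \, d\hat\mu = 0$ for all $X\in C^1_c(\R^n;\R^n)$, which is the definition of a generalized stationary varifold with weight $\hat\mu$ (cf.\ \cite[eq.~(6.10)]{PigatiStern}): the approximate continuity of $A$ at $(p,T)$ and the convergence $\widetilde{\mathcal{T}}^{\tilde\epsilon}_t\,dx\otimes dt \rightharpoonup^* A(p,T)\,d\hat\mu\otimes dt$ from \cref{diagonal.pairs} let us pass to the limit in \cref{div.id}, where the right-hand side terms $-2\langle \nabla_t u_t, \dot u_t\rangle - 2\epsilon^2\omega_t(\cdot,\dot\alpha_t)$ vanish in the limit because $\tilde\nu^{\tilde\epsilon}\rightharpoonup^*0$ (again Cauchy--Schwarz against the uniformly bounded energy measure kills the cross terms). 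The time-cutoff $\varphi(t)$ with $\int\varphi=1$ is inserted only to extract a single time slice; since $\hat\mu$ is $t$-independent, this produces the static identity.

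The main obstacle is the trace lower bound $\tr A(p,T)\ge n-2$: this is where the discrepancy estimate \cref{disc.refined} (rescaled to $\tilde\xi\le Cs_\ell$) is essential, because without it the curvature term $\epsilon^2|\omega|^2$ could in principle carry energy not matched by the potential term, spoiling the equality $2|\nabla u|^2+4\epsilon^2|\omega|^2 \approx 2e^\epsilon$. Concretely, $2|\nabla u|^2 + 4\epsilon^2|\omega|^2 = 2e^\epsilon + 2(\epsilon^2|\omega|^2 - \tfrac{(1-|u|^2)^2}{4\epsilon^2}) = 2e^\epsilon + 2\xi(\epsilon|\omega| + \tfrac{1-|u|^2}{2\epsilon}) \le 2e^\epsilon + 2(\xi)^+\sqrt{e^\epsilon}$, exactly as in the computation preceding \cref{phi'est}, so the error term is $2(\xi)^+\sqrt{e^\epsilon} \le Cs_\ell\sqrt{e^\epsilon}$ for the rescaled pairs; integrating against a test function and using $\int\sqrt{e^\epsilon}\varphi \le (\int e^\epsilon\varphi)^{1/2}(\int\varphi)^{1/2} \le C(\varphi)$ shows the error vanishes as $\ell\to\infty$. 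One then invokes the rectifiability/structure theory of \cite{AmbrosioSoner} for generalized stationary varifolds to conclude $A(p,T)\hat\mu$ genuinely behaves like a stationary varifold; but the bare statement claimed here — that it \emph{is} a generalized stationary $(n-2)$-varifold with weight $\hat\mu$ — follows from the two ingredients above.
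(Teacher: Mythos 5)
Your proposal matches the paper's approach: both establish $\tr A(p,T)\ge n-2$ by expressing $2|\nabla u|^2+4\epsilon^2|\omega|^2 - 2e^\epsilon$ via the discrepancy $\xi$, bounding the error by $C(\xi)^+\sqrt{e^\epsilon}\le Cs_\ell\sqrt{e^\epsilon}$ on the rescaled pairs, integrating against a test function and using Cauchy--Schwarz with the local energy bound, and both read off the bound $-I\le A\le I$ pointwise (while the stationarity was already obtained just before the proposition from the limit of \cref{div.id} and the vanishing of $\tilde\nu^{\tilde\epsilon}$). One small slip: you claim $0\le A(p,T)\le g$, but that lower bound is false in general (e.g.\ if $|\nabla u|^2$ concentrates along a single direction, $\mathcal{T}^\epsilon$ acquires a negative eigenvalue close to $-e^\epsilon$); the correct bound, which your own argument produces and which is all that the definition of $A_{n,n-2}$ requires, is $-I\le A(p,T)\le I$.
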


\begin{proof}
	We need to show that
	$$ -I\le A(p,T)\le I,\quad\operatorname{tr}A(p,T)\ge n-2. $$
	The proof follows the same lines of \cite[Lemma~6.3]{PigatiStern}, which deals with the static situation.
	Calling $\tilde e=\tilde e^{\tilde\epsilon}$ the energy density, it amounts to showing the two bounds
	$$ \left|\int_{\R^n\times\R}\ang{\widetilde{\mathcal{T}}^{\tilde\epsilon},X\otimes X}\,dx\,dt\right|
	\le\int_{\R^n\times\R}\tilde e^{\tilde\epsilon}|X|^2\,dx\,dt, $$
	$$ \int_{\R^n\times\R}\ang{\widetilde{\mathcal{T}}^{\tilde\epsilon},\varphi I}\,dx\,dt
	\ge (n-2)\int_{\R^n\times\R}\tilde e^{\tilde\epsilon}\varphi\,dx\,dt-\delta_{\tilde\epsilon}(\varphi), $$
	with an error $\delta_{\tilde\epsilon}(\varphi)$ which is infinitesimal as $\tilde\epsilon\to 0$ (or, more precisely, as $\ell\to\infty$),
	for any compactly supported vector field $X$ and function $\varphi$ defined on the spacetime
	(each quantity is measured with respect to the rescaled metric $g^s$). Recalling that $\widetilde{\mathcal{T}}=A(p,T)\tilde\mu$, the claim follows from these two bounds once we pass to the limit.
	
	While the first one is immediate, in our setting the second one is also easier to obtain compared to \cite{PigatiStern}, since by \cref{scaled.bounds}, on any compact subset $K\subset\R\times\R^n$, eventually the discrepancy $\tilde\xi$ of the rescaled pair $(\tilde u,\widetilde\nabla)$ is bounded above by $Cs$ (with $s$ the corresponding scale). Hence, we can immediately conclude that
	$$ \int_{K} \left(\tilde\epsilon^2|\tilde\omega|^2-\frac{(1-|\tilde u|^2)^2}{4\tilde\epsilon^2}\right)
	\le C\int_{K} \sqrt{\tilde e}(\tilde\xi)^+
	\le C(K)s, $$
	since we have the uniform bound
	$$ \int_{K}\tilde e\le C(K), $$
	which easily follows from \cref{ener.bd.bis} and scale invariance. From this observation, the proof follows as in \cite{PigatiStern}.
\end{proof}

Since the limiting energy density $\tilde\mu_t=\hat\mu$ is constant in $t$, we can apply \cref{clearing.spt} to the rescaled pairs and obtain that $\liminf_{s\to 0}\frac{\hat\mu(B_r(x))}{r^{n-2}}>0$ for all $x\in\spt(\hat\mu)$.
In view of \cite[Theorem~3.8(c)]{AmbrosioSoner}, we conclude that $A(p,T)\hat\mu$ is a standard rectifiable stationary $(n-2)$-varifold, meaning that $\hat\mu$ is the weight of a stationary rectifiable varifold $V$ and the constant matrix $A(p,T)$ represents the orthogonal projection onto a plane $P_0$, which is the tangent plane at a.e.\ point. In particular, the density $\Theta_{n-2}(\hat\mu,\cdot)$ exists everywhere.

Note that we have
$$ \theta_0:=\Theta_{n-2}(\hat\mu,0)=\tilde\Theta_{n-2}(\hat\mu,0)=\tilde\Theta_{n-2}(\mu_T,p), $$
where the last equality follows from \cref{mod.theta.trix}: in fact, calling $\tilde\Theta_{n-2}^r(\hat\mu,0)$
the quantity inside the limit appearing in \cref{mod.theta.trix}, we see that
$$ \tilde\Theta_{n-2}^r(\hat\mu,0)=\lim_{\ell\to\infty}\tilde\Theta_{n-2}^{rs_\ell}(\mu_T,0)\equiv\tilde\Theta_{n-2}(\mu_T,p), $$
where the first equality comes from dominated convergence and the fact that
$$ \frac{\mu_T(B_{rs_\ell\sigma}(0))}{(rs_\ell\sigma)^{n-2}}\to\frac{\hat\mu(B_{r\sigma}(0))}{(r\sigma)^{n-2}} $$
for a.e.\ $\sigma$, which holds as $\hat\mu$ is a tangent measure. Since $\frac{\hat\mu(B_r(0))}{r^{n-2}}$ is increasing by the monotonicity formula for classical stationary varifolds, the last identity implies that it is in fact constant, i.e.,
$$ \frac{\hat\mu(B_r(0))}{\omega_{n-2}r^{n-2}}\equiv\theta_0. $$

We claim that
$$\hat\mu=\theta_0\cdot\mathcal{H}^{n-2}\mres P_0,$$
which finishes the proof.
Note that $0\in\spt(\hat\mu)$, thanks to \cref{lower.area.bd} and the fact that $\hat\mu$ is a tangent measure.
Assume that $P_0=\R^{n-2}\times\{0\}$, up to a rotation. Using coordinates $x=(y,z)\in\R^{n-2}\times\R^2$ and taking $\eta\in C^1_c(\R^2)$ with $\eta(0)=1$ and $0\le\eta\le 1$, we see that the varifold $V_\eta$, obtained from $V$ by multiplying the density pointwise by $\eta(z)$, is still stationary (since, viewing $\eta$ as a function on $\R^n$, $d\eta$ vanishes along $P_0$) and has density $\theta_0$ at the origin.
Hence, by monotonicity,
$$\theta_0\le\frac{|V_\eta|(B_r(0))}{\omega_{n-2}r^{n-2}}\le\frac{|V|(B_r(0))}{\omega_{n-2}r^{n-2}}=\theta_0$$
for all radii $r>0$. This shows that $V_\eta=V$ for all such functions $\eta$, meaning that $V$ is supported on the plane $P_0$. By the constancy theorem, $V$ has constant density $\theta_0$, so that
$$ \hat\mu=|V|=\theta_0\cdot\mathcal{H}^{n-2}\mres P_0. $$

\section{Integrality} \label{sec.int}
We establish in this section integrality of ($\frac{1}{2\pi}$ times) the density of the limiting measures $\mu_t$. 
As in the previous section, we fix a time $T>0$ such that $\mu_T(M\setminus G_T')=0$, so that by \cref{rect.coroll} $\mu_T$ is rectifiable.
We claim that $\Theta_{n - 2}(\mu_T,p)\in 2\pi\N$ for $\mu_T$-a.e.\ $p$. In order to show this, we fix a point $p\in G_T'$. Recalling \cref{tan.flat},
the usual Euclidean density $\Theta_{n-2}(\mu_T,p)$ exists; we let
$$ \theta_0:=\Theta_{n-2}(\mu_T,p)=\Theta^P(\mu,p,T). $$
Up to rotation of the normal coordinates at $p$, we assume that the tangent plane is $P_0=\R^{n-2}\times\{0\}$.

Again, in order to obtain quantization, we look at the sequence of rescaled pairs
$$ (\tilde u^{\tilde\epsilon},\tilde\alpha^{\tilde\epsilon})=(\tilde{u}_t^{\epsilon(s_\ell)/s_\ell},\tilde\alpha_t^{\epsilon(s_\ell)/s_\ell})(x,t) $$
provided by \cref{diagonal.pairs}.

\begin{proposition} As $\tilde\epsilon\to 0$ (or, more precisely, as $\ell\to\infty$) we have
	\begin{align}\label{par.van} &\sum_{j=1}^{n-2}(|\widetilde\nabla_{\de_j}\tilde u|^2+\tilde\epsilon^2|\tilde\omega(\de_j,\cdot)|^2)\,dx\otimes dt\rightharpoonup^* 0 \end{align}
	as measures in the spacetime $\R^n\times\R$.
\end{proposition}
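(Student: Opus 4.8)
The plan is to derive \cref{par.van} directly from the identification of the limiting stress-energy tensor furnished by \cref{diagonal.pairs}, using crucially that $A(p,T)$ is the orthogonal projection $\Pi_{P_0}$ onto the tangent plane $P_0=\R^{n-2}\times\{0\}=\Span(\de_1,\dots,\de_{n-2})$, as established in the course of proving \cref{tan.flat}. The point is purely algebraic: from $\mathcal{T}^\epsilon_t=e^\epsilon(u_t,\nabla_t)g-2\nabla_tu_t^*\nabla_tu_t-2\epsilon^2\omega_t^*\omega_t$, evaluating on a coordinate direction $\de_j$ for the rescaled pair (which carries the metric $g^s$, $s=s_\ell$) gives $\langle\widetilde{\mathcal{T}}^{\tilde\epsilon},\de_j\otimes\de_j\rangle=g^s(\de_j,\de_j)\,\tilde e^{\tilde\epsilon}-2|\widetilde\nabla_{\de_j}\tilde u|^2-2\tilde\epsilon^2|\tilde\omega(\de_j,\cdot)|^2$, and hence, summing over $j=1,\dots,n-2$,
\begin{equation*}
	2\sum_{j=1}^{n-2}\bigl(|\widetilde\nabla_{\de_j}\tilde u|^2+\tilde\epsilon^2|\tilde\omega(\de_j,\cdot)|^2\bigr)
	=\Bigl(\sum_{j=1}^{n-2}g^s(\de_j,\de_j)\Bigr)\tilde e^{\tilde\epsilon}-\sum_{j=1}^{n-2}\langle\widetilde{\mathcal{T}}^{\tilde\epsilon},\de_j\otimes\de_j\rangle.
\end{equation*}

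First I would pass to the weak-$*$ limit $\ell\to\infty$ in each term on the right, as measures on $\R^n\times\R$. By \cref{diagonal.pairs} the energy densities satisfy $\tilde e^{\tilde\epsilon}\,dx\otimes dt\rightharpoonup^*\hat\mu\otimes dt$, with locally uniformly bounded mass (by \cref{ener.bd.bis} and scale invariance); since moreover $g^s(\de_j,\de_j)(x)=g_{jj}(sx)\to 1$ locally uniformly (because $g_{ij}(0)=\delta_{ij}$ in the normal coordinates at $p$), the standard fact that a locally uniformly convergent factor times such a weak-$*$ convergent sequence of measures converges weak-$*$ yields $\bigl(\sum_{j=1}^{n-2}g^s(\de_j,\de_j)\bigr)\tilde e^{\tilde\epsilon}\,dx\otimes dt\rightharpoonup^*(n-2)\,\hat\mu\otimes dt$. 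Also by \cref{diagonal.pairs}, $\widetilde{\mathcal{T}}^{\tilde\epsilon}_t\,dx\otimes dt\rightharpoonup^*A(p,T)\,d\hat\mu\otimes dt$ as $\Sym_n$-valued measures, so testing against the constant symmetric form $\de_j\otimes\de_j$ gives $\langle\widetilde{\mathcal{T}}^{\tilde\epsilon},\de_j\otimes\de_j\rangle\,dx\otimes dt\rightharpoonup^*\langle A(p,T),\de_j\otimes\de_j\rangle\,\hat\mu\otimes dt$; and since $A(p,T)=\Pi_{P_0}$ with $\de_{n-1},\de_n$ orthogonal to $P_0$, we have $\sum_{j=1}^{n-2}\langle A(p,T),\de_j\otimes\de_j\rangle=\tr A(p,T)=n-2$.

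Subtracting the two limits, the right-hand side of the displayed identity converges weak-$*$ to $(n-2)\,\hat\mu\otimes dt-(n-2)\,\hat\mu\otimes dt=0$; since the left-hand side is a sequence of nonnegative measures, it too converges weak-$*$ to $0$, which is exactly \cref{par.van}. This proposition is therefore a short bookkeeping consequence of the earlier sections rather than a serious obstacle: the only points requiring care are tracking the metric factors $g^s(\de_j,\de_j)$ and $|\tilde\omega(\de_j,\cdot)|^2$ (all computed in $g^s$, whose $g^s$- and flat values coincide in the limit since $g^s$ converges locally uniformly to the flat metric) and justifying the product passage to the limit noted above. The real content that drives the argument — that the limiting stress-energy tensor equals $\Pi_{P_0}\hat\mu$ — is already available from \cref{diagonal.pairs} and the proof of \cref{tan.flat}.
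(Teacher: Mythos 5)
Your proof is correct and takes essentially the same approach as the paper. Both arguments rewrite $\sum_{j=1}^{n-2}(|\widetilde\nabla_{\de_j}\tilde u|^2+\tilde\epsilon^2|\tilde\omega(\de_j,\cdot)|^2)$ (up to metric corrections that are $o(\tilde e^{\tilde\epsilon})$ since $g^s\to\delta$ locally in $C^2$) as $\frac{1}{2}\bigl((n-2)\tilde e^{\tilde\epsilon}-\langle\widetilde{\mathcal T}^{\tilde\epsilon},P_0\rangle\bigr)$ and then pass to the limit using \cref{diagonal.pairs} together with the identification $A(p,T)=\Pi_{P_0}$ from the proof of \cref{tan.flat}; you simply expand the pairing with $P_0$ as a sum over coordinate directions while the paper keeps it abstract, which is a purely notational difference.
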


\begin{proof}
	Identifying $P_0$ with the orthogonal projection matrix, the claim is equivalent to
	$$ \ang{\widetilde\nabla\tilde u^*\widetilde\nabla\tilde u+\tilde\epsilon^2\tilde\omega^*\tilde\omega,P_0} \,dx\otimes dt\rightharpoonup^* 0. $$
	(Note that expressions such as $\widetilde\nabla\tilde u^*\widetilde\nabla\tilde u$, as well as the pairing, in principle depend on the metric;
	however, since the rescaled metric converges smoothly to the Euclidean one, the outcome is the same up an error which is infinitesimal with respect to the energy density $\tilde e_t^{\tilde\epsilon}$.)
	Recalling the definition of the stress-energy tensor $\widetilde{\mathcal{T}}_t^{\tilde\epsilon}$, this is the same as
	$$ \ang{\tilde e_t^{\tilde\epsilon} I-\widetilde{\mathcal{T}}_t^{\tilde\epsilon},P_0} \,dx\otimes dt\rightharpoonup^* 0. $$
	Equivalently, we are claiming that
	$$ ((n-2)\tilde e_t^{\tilde\epsilon}-\ang{\widetilde{\mathcal{T}}_t^{\tilde\epsilon},P_0}) \,dx\otimes dt\rightharpoonup^* 0. $$
	However, this follows immediately from the results of the previous section, where we saw that
	$$ (n-2)\tilde e_t^{\tilde\epsilon} \,dx\otimes dt\rightharpoonup^* \theta_0 \,d\mathcal{H}^{n-2}\mres P_0(x)\otimes dt $$
	and
	\begin{align*}
		&\widetilde{\mathcal{T}}_t^{\tilde\epsilon}\,dx\otimes dt\rightharpoonup^* P_0\cdot\theta_0 \,d\mathcal{H}^{n-2}\mres P_0(x)\otimes dt. \qedhere
	\end{align*}
\end{proof}

In the sequel, for simplicity of notation, we denote by $(u_t^\epsilon,\nabla_t^\epsilon)$ the rescaled pairs, whose energy densities $e_t^\epsilon$ converge to $\theta_0\cdot\mathcal{H}^{n-2}\mres P_0$ for each time $t\in\R$, and we often drop $t$ and $\epsilon$ when no ambiguity arises.

\subsection{Slicing in time and in space}
In order to reduce to a two-dimensional setting, as in \cite[Section~6.2]{PigatiStern}, we will first select a suitable temporal slice $\R^n\times\{t_0\}=\R^n\times\{t_0^\epsilon\}$ (time $t_0$ corresponds to $T+s^2t_0$ before the rescaling), at which the gradient flow solutions resemble stationary points. Then, recalling that $P_0=\R^{n-2}\times\{0\}$, we will further slice along the first $n-2$ spatial components, and we will show that for typical $y_0=y_0^\epsilon\in\R^{n-2}$,
the energy along the two-dimensional slice $\{y_0\}\times\R^2\times\{t_0\}$ is close to $2\pi\N$, exploiting well-known energy quantization results for entire solutions on the plane.

Using coordinates $(y,z)\in\R^{n-2}\times\R^2$ for the space $\R^n$, we define the function
$$ h^\epsilon(y,t):=\int_{B_2^2(0)}e_t^\epsilon(y,z)\,dz, $$
which gives the energy on the two-dimensional slice $\{y\}\times B_2^2(0)\times\{t\}$.
We have the $L^1$ bound
$$ \int_{B_2^{n-2}(0)\times [-2,2]}h^\epsilon(y,t)\,dy\,dt=\int_{B_2^{n-2}(0)\times B_2^2(0)\times [-2,2]}e_t^\epsilon(x)\,dx\,dt\le C, $$
for a constant $C$ independent of $\epsilon$
(in fact, the integral converges to $2^n\omega_{n-2}\theta_0$, by assumption).
Using the parabolic distance on $\R^{n-2}\times\R$, we define the parabolic maximal function
$$M^Ph^{\epsilon}(y,t):=\sup_{r\in(0,1)}r^{-n}\int_{B^P_r(y,t)}h^{\epsilon}.$$
Since the Lebesgue measure on $\R^{n-2}\times \R$ is doubling with respect to this distance, we have the usual weak-(1,1) estimate on the operator $M^P$. In view of the previous $L^1$ bound, we get in particular
\begin{align}\label{K.bd} &|\{(y,t)\in B_1^{n-2}(0)\times[-1,1] : M^P h^\epsilon(t,y)>K\}|\le \frac{C}{K} \end{align}
for all $K>0$, with the volume measured using the Lebesgue measure $\mathcal{L}^{n-1}$.

In the sequel, we often omit the center of a Euclidean ball, when centered at the origin.

\begin{proposition}
	For a fixed $K$ large enough, the following holds. Eventually there exists $t_0=t_0^\epsilon\in[-1,1]$, depending on $\epsilon$, such that
	\begin{align}\label{vanishing.ann} &\int_{B_2^{n-2}\times [B_2^2\setminus B_1^2]}e_{t_0}^\epsilon(y,z)\,dy\,dz\to 0, \end{align}
	\begin{align}\label{vanishing.tris} &\int_{ B_2^{n-2}\times B_2^2\times\{t_0\}}(|\dot{u}|^2+\epsilon^2|\dot{\alpha}|^2)\,dy\,dz\to 0, \end{align}
	\begin{align}\label{par.van.bis} &\int_{ B_2^{n-2}\times B_2^2\times\{t_0\}}\sum_{j=1}^{n-2}(|\nabla_{\de_j}u|^2+\epsilon^2|\omega(\de_j,\cdot)|^2)\,dy\,dz\to 0, \end{align}
	as well as
	$$ |\{y\in B_1^{n-2}:M^Ph^\epsilon(y,t_0)>K\}|\le\frac{1}{2}|B_1^{n-2}|. $$
\end{proposition}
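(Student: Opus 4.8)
The plan is to pick the time $t_0=t_0^\epsilon\in[-1,1]$ by a pigeonhole argument, ruling out simultaneously the ``bad'' slices for each of the four requirements. I would fix $K$ first (its value is chosen at the end, and depends only on the $\epsilon$-independent constant in \cref{K.bd}). For $t\in[-1,1]$ introduce the nonnegative functions
\begin{align*}
	g_1^\epsilon(t)&:=\int_{B_2^{n-2}\times B_2^2}(|\dot u|^2+\epsilon^2|\dot\alpha|^2)(\cdot,t), \qquad
	g_2^\epsilon(t):=\int_{B_2^{n-2}\times B_2^2}\sum_{j=1}^{n-2}(|\nabla_{\de_j}u|^2+\epsilon^2|\omega(\de_j,\cdot)|^2)(\cdot,t), \\
	g_3^\epsilon(t)&:=\int_{B_2^{n-2}\times(B_2^2\setminus B_1^2)}e_t^\epsilon,
\end{align*}
so that \cref{vanishing.tris}, \cref{par.van.bis} and \cref{vanishing.ann} ask precisely that $g_1^\epsilon(t_0^\epsilon),g_2^\epsilon(t_0^\epsilon),g_3^\epsilon(t_0^\epsilon)\to0$.

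First I would check that $\int_{-1}^1 g_i^\epsilon(t)\,dt\to0$ as $\epsilon\to0$ for $i=1,2,3$. For $i=1,2$ this is immediate from the spacetime weak-$*$ vanishing in \cref{vanishing.nu.bis} and \cref{par.van}, by testing the relevant nonnegative spacetime measures against a fixed function in $C_c(\R^n\times\R)$ that is $\geq1$ on $\bar B_2^{n-2}\times\bar B_2^2\times[-1,1]$. For $i=3$ I would use the spacetime convergence $e_t^\epsilon\,dx\otimes dt\rightharpoonup^*\theta_0\,(\mathcal{H}^{n-2}\mres P_0)\otimes dt$ (see \cref{diagonal.pairs} and \cref{tan.flat}): since the compact set $\bar B_2^{n-2}\times(\bar B_2^2\setminus B_1^2)\times[-1,1]$ is disjoint from $P_0\times\R$ (the $z$-coordinate being $\geq1$ there), testing against a cutoff in $C_c(\R^n\times\R)$ supported in $\{|z|>\tfrac12\}$ and $\geq1$ on that set gives $\int_{-1}^1 g_3^\epsilon\to0$. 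Writing $a^\epsilon:=\int_{-1}^1(g_1^\epsilon+g_2^\epsilon+g_3^\epsilon)\to0$, Chebyshev's inequality yields
$$\bigl|\{t\in[-1,1]:g_1^\epsilon(t)+g_2^\epsilon(t)+g_3^\epsilon(t)>\sqrt{a^\epsilon}\}\bigr|\leq\sqrt{a^\epsilon}.$$

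For the maximal-function requirement, Fubini applied to \cref{K.bd} gives $\int_{-1}^1\bigl|\{y\in B_1^{n-2}:M^Ph^\epsilon(y,t)>K\}\bigr|\,dt\leq C/K$ with $C$ independent of $\epsilon$, so Chebyshev again bounds the measure of $\{t\in[-1,1]:|\{y\in B_1^{n-2}:M^Ph^\epsilon(y,t)>K\}|>\tfrac12|B_1^{n-2}|\}$ by $\tfrac{2C}{K|B_1^{n-2}|}$; now I fix $K$ so large that this is $<\tfrac14$. The union of the two bad time-sets then has measure $<\tfrac14+\sqrt{a^\epsilon}<2$ once $\epsilon$ is small, so its complement in $[-1,1]$ is nonempty; choosing $t_0=t_0^\epsilon$ in the complement, we get $g_i^\epsilon(t_0^\epsilon)\leq\sqrt{a^\epsilon}\to0$ for $i=1,2,3$ — i.e.\ \cref{vanishing.ann}, \cref{vanishing.tris}, \cref{par.van.bis} — together with the final inequality of the statement.

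I do not expect a serious obstacle here; the proof is essentially a measure-theoretic pigeonhole. The only points requiring (minor) care are that $K$ must be fixed before sending $\epsilon\to0$, so that the weak-$(1,1)$ bound \cref{K.bd} carries an $\epsilon$-independent constant, and that the cutoff arguments for $\int_{-1}^1 g_i^\epsilon$ are legitimate — which for $g_3^\epsilon$ rests on the annular region $B_2^2\setminus B_1^2$ staying a definite distance from $0$ in the $z$-variable, keeping the relevant compact sets disjoint from the tangent plane $P_0$.
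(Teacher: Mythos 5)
Your argument is correct and follows essentially the same pigeonhole/Chebyshev route as the paper: both deduce $\int_{-1}^1 g_i^\epsilon\,dt\to0$ from the spacetime weak-$*$ convergences, combine this with the weak-$(1,1)$ bound \cref{K.bd} integrated in time, and select $t_0^\epsilon$ outside the union of the exceptional time-sets after fixing $K$ large. The only difference is cosmetic — you use the explicit threshold $\sqrt{a^\epsilon}$ where the paper invokes an abstract sequence $\delta_\epsilon\to0$, and you spell out the cutoff argument showing $\int_{-1}^1 g_3^\epsilon\to0$, which the paper takes as given.
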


\begin{proof}
	Using the fact that
	$$ \int_{ B_2^{n-2}\times [B_2^2\setminus B_1^2]\times[-1,1]}e_{t}^\epsilon(y,z)\,dy\,dz\,dt\to 0, $$
	as well as \cref{vanishing.nu.bis} and \cref{par.van}, we can find a sequence $\delta_\epsilon\to 0$
	such that the quantities appearing in the first three claims can exceed $\delta_\epsilon$ only for times $t_0$ in a set $F\subseteq[-1,1]$ (depending on $\epsilon$) of measure $|F|\le\delta_\epsilon\to0$.
	
	Moreover, calling $F'\subseteq[-1,1]$ the set of times where the last claim fails, we have
	$$ |F'| \cdot \frac{1}{2}|B_1^{n-2}|\le \frac{C}{K}, $$
	thanks to the previous volume bound \cref{K.bd}. Hence, we have $|F\cup F'|\le\delta_\epsilon+\frac{C}{K}$,
	so that for $K$ large enough (and $\epsilon$ small enough) the complement $[-1,1]\setminus(F\cup F')$ is nonempty.
	It then suffices to take $t_0\in[-1,1]\setminus(F\cup F')$.
\end{proof}

In the spacetime, the energy density of the shifted pairs $(u_{t_0^\epsilon+t}^\epsilon,\alpha_{t_0^\epsilon+t}^\epsilon)$ still converges to $\theta_0\cdot\mathcal{H}^{n-2}\mres P_0$, for all $t\in\R$. Indeed, calling $\mu_t'$ the limit, which we can assume to exist for all $t$ (up to a subsequence, thanks to the semi-decreasing property), we know that
$$ \theta_0 \,d\mathcal{H}^{n-2}\mres P_0(x)\otimes dt= d\mu_t'(x)\otimes dt, $$
since in the spacetime the energy density still converges to $\theta_0\,\mathcal{H}^{n-1}\mres ( P_0\times\R)$.
Hence, we must have $\mu_t'=\mathcal{H}^{n-2}\mres P_0$ for a.e.\ $t$, and by the semi-decreasing property we conclude that this must hold for all $t\in\R$.

From now on, we will replace $(u_t^\epsilon,\alpha_t^\epsilon)$ with the pair shifted by time $t_0^\epsilon$; in other words, we will assume without loss of generality that $t_0^\epsilon=0$, while retaining the property that
$$ \mu_t^\epsilon\rightharpoonup^*\theta_0\cdot\mathcal{H}^{n-2}\mres P_0, $$
as well as the conclusions of the previous proposition. In particular, we have
$$ \int_{B_2^{n-2}\times B_2^2}e_0^\epsilon(y,z)\,dy\,dz\le C $$
for a constant $C$ independent of $\epsilon$.
A useful consequence of the last bound, together with \cref{vanishing.tris} and \cref{div.id}, is that
\begin{align}\label{vanishing.div} &\int_{B_2^{n-2}\times B_2^2}\lvert\divergence\mathcal{T}_0^\epsilon\rvert(y,z)\,dy\,dz\to 0. \end{align}

Finally, we also slice in the first $n-2$ spatial coordinates.

\begin{proposition}
	We have
	$$ \int_{B_1^{n-2}}\left|\int_{B_2^2}e_0^\epsilon(y,z)\,dz-\theta_0\right|\,dy\to 0 $$
	as $\epsilon\to 0$.
\end{proposition}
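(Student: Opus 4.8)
The plan is to split $\int_{B_1^{n-2}}\lvert h^\epsilon-\theta_0\rvert$, where $h^\epsilon(y):=\int_{B_2^2}e_0^\epsilon(y,z)\,dz$, into an ``upper'' half that is immediate from the weak-$*$ convergence and a ``lower'' half that requires a slicewise estimate. First I would record the easy half: testing the convergence $\mu_0^\epsilon\rightharpoonup^*\theta_0\mathcal{H}^{n-2}\mres P_0$ against products $\varphi(y)\psi(z)$ with $\psi\equiv1$ on $B_1^2$ and $\spt\psi\subseteq B_2^2$, and discarding the energy on $B_2^{n-2}\times(B_2^2\setminus B_1^2)$ by \eqref{vanishing.ann}, yields $h^\epsilon\rightharpoonup^*\theta_0$ on $B_2^{n-2}$; in particular $\int_{B_1^{n-2}}h^\epsilon\to\theta_0\lvert B_1^{n-2}\rvert$. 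Since $(\theta_0-h^\epsilon)^+\le\theta_0$ and $\int_{B_1^{n-2}}(h^\epsilon-\theta_0)^+-\int_{B_1^{n-2}}(\theta_0-h^\epsilon)^+\to0$, the whole statement reduces to proving $\int_{B_1^{n-2}}(\theta_0-h^\epsilon)^+\to0$, i.e.\ that $\lvert\{y\in B_1^{n-2}:h^\epsilon(y)<\theta_0-\delta\}\rvert\to0$ for each $\delta>0$.

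Next I would extract a matching lower bound on averages of $h^\epsilon$ from \cref{ener.bd.bis}: for small $r>0$ one has $\mu_0^\epsilon(B_r(y_0,0))\le\int_{B_r^{n-2}(y_0)}h^\epsilon(y)\,dy$, while $\mu_0^\epsilon(B_r(y_0,0))\to\theta_0\omega_{n-2}r^{n-2}$ by the weak convergence; hence $\liminf_\epsilon\omega_{n-2}^{-1}r^{2-n}\int_{B_r^{n-2}(y_0)}h^\epsilon\ge\theta_0$, and combined with the weak limit this gives $\omega_{n-2}^{-1}r^{2-n}\int_{B_r^{n-2}(y_0)}h^\epsilon\to\theta_0$ for every $y_0\in B_1^{n-2}$ and every small $r$. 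The task is then to promote this averaged convergence to convergence in measure from below.

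For this I would pass to the two-dimensional slices. Using Fubini together with the smallness of the shrinking term \eqref{vanishing.tris}, of the tangential energy \eqref{par.van.bis}, and of $\lvert\divergence\mathcal{T}_0^\epsilon\rvert$ \eqref{vanishing.div} --- and the parabolic maximal function bound, which by continuity in $t$ keeps $h^\epsilon(\cdot,0)$ bounded by a fixed $K$ off a set whose complement in $B_1^{n-2}$ can be arranged, up to a null-in-the-limit error, to lie inside $\{h^\epsilon>K\}$ (where $(\theta_0-h^\epsilon)^+$ vanishes for $K\ge\theta_0$, so it does not contribute) --- I would select a ``good'' set $Y_\epsilon\subseteq B_1^{n-2}$ such that for $y_0\in Y_\epsilon$ the restriction $(u_0^\epsilon(y_0,\cdot),\nabla_0^\epsilon(y_0,\cdot))$ to $B_2^2$ has energy $h^\epsilon(y_0)\le K$, small discrepancy, vanishing energy on $B_2^2\setminus B_1^2$, and is an approximate critical point of the two-dimensional self-dual energy (its two-dimensional stress-energy tensor being almost divergence-free and almost trace-free, with errors vanishing along a subsequence). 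Running the two-dimensional clearing-out and exponential decay machinery (as in \cite[Section~5]{PigatiStern} and \cref{decay.sec}) on these slices, the slice energy $h^\epsilon(y_0)$ converges and concentrates at finitely many points, one of which is at distance $o(1)$ from $0$ (since $0\in\spt\hat\mu$ and the zero sets of $u_0^\epsilon$ converge to $P_0$); feeding the resulting (subsequential) pointwise limit of $h^\epsilon$ on $Y_\epsilon$ --- which is now bounded by $K$ --- back into the averaged identity above forces this limit to equal $\theta_0$ a.e., which gives $\int_{Y_\epsilon}(\theta_0-h^\epsilon)^+\to0$ and hence the claim.

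The main obstacle is the slicing step. The two-dimensional Euler--Lagrange residual on $\{y_0\}\times B_2^2$ contains the tangential Laplacian $\sum_{j\le n-2}\nabla_{\partial_j}\nabla_{\partial_j}u$, which is not controlled pointwise by the energy; likewise the two-dimensional divergence of the sliced stress-energy tensor differs from the restriction of $\divergence\mathcal{T}_0^\epsilon$ by tangential derivatives of the (small) mixed blocks of $\mathcal{T}_0^\epsilon$. Consequently ``stationarity of the slice'' can only be extracted in a distributional, averaged-in-$y_0$ sense, pairing against test objects and integrating by parts solely in the $z$-variables, and then specializing to a.e.\ $y_0$. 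A secondary difficulty is the absence of a priori equi-integrability of $h^\epsilon$ on $B_1^{n-2}$ --- only the mass bound and the weak-type maximal-function estimate are available --- so the argument has to be organized around the fact that, for the one-sided estimate $\int(\theta_0-h^\epsilon)^+\to0$, the region where $h^\epsilon$ is large is harmless and only the uniformly bounded good slices need to be analyzed.
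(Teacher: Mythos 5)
Your opening reduction is sound: splitting $\lvert h^\epsilon-\theta_0\rvert=(h^\epsilon-\theta_0)^++(\theta_0-h^\epsilon)^+$, combining the easy weak-$*$ limit $\int_{B_1^{n-2}}h^\epsilon\to\theta_0\lvert B_1^{n-2}\rvert$ (which uses \cref{vanishing.ann}), and noting $(\theta_0-h^\epsilon)^+\le\theta_0$ correctly reduces the claim to $\int_{B_1^{n-2}}(\theta_0-h^\epsilon)^+\to0$. The averaged-scale identity you derive from \cref{ener.bd.bis}, however, is just a restatement of the same weak-$*$ convergence and adds no new information; the real work is still to upgrade weak-$*$ to $L^1$.

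That upgrade is where your argument has a genuine gap, and it is where the paper and your proposal part ways. The paper bounds $\bigl|\int_{B_2^{n-2}}h^\epsilon\divergence Y\bigr|\le\delta_\epsilon\|DY\|_{L^\infty}$ --- using \cref{vanishing.ann}, \cref{par.van.bis} and \cref{vanishing.div} to show that, after freezing the $z$-variables, $h^\epsilon\,dy$ has almost-vanishing first variation --- and then invokes Allard's strong constancy lemma \cite[Theorem~1.(4)]{Allard}, which is precisely the tool that converts ``weak-$*$ limit is a constant multiple of Lebesgue'' plus ``first variation tends to zero'' into $L^1$ convergence to that constant. Your proposal uses the same inputs (the three vanishing estimates, the maximal-function bound) only to set up slicing and never exploits the divergence control in a way that forces $h^\epsilon$ to be asymptotically \emph{constant} in $y$. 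The two-dimensional clearing-out/quantization machinery you invoke is the content of the \emph{subsequent} proposition and \cref{blowup.lim}; at best it would show that on a good set $Y_\epsilon$ one has $h^\epsilon(y_0)\approx 2\pi D^\epsilon(y_0)$ for some integer $D^\epsilon(y_0)\le K/(2\pi)$, but it does not identify $D^\epsilon(y_0)$. The scenario in which $D^\epsilon(y_0)$ oscillates between, say, $1$ and $3$ on two halves of $Y_\epsilon$ is perfectly compatible with $\int_{B_r^{n-2}}h^\epsilon\to\theta_0\lvert B_r^{n-2}\rvert$ with $\theta_0=4\pi$; ruling it out is exactly a constancy statement, which your argument never supplies. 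Relatedly, the ``extract a subsequential pointwise limit of $h^\epsilon$ on $Y_\epsilon$'' step is not a legitimate operation: boundedness on $Y_\epsilon$ gives weak-$*$ $L^\infty$ compactness, not pointwise or in-measure convergence, and feeding a weak-$*$ limit into the averaged identity does not pin it down to $\theta_0$ without first knowing convergence in measure --- which is what you were trying to prove. (There is also a soft circularity concern: applying \cref{blowup.lim} productively in the next section requires \cref{allard.max}, which is a consequence of the very proposition you are proving.) Without replacing the pointwise-limit step by something along the lines of Allard's constancy lemma --- or a Tonegawa-style argument that explicitly uses the smallness of the first variation to prevent the quantized value from jumping between slices --- the proof does not close.
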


\begin{proof}
	Indeed, let $h^\epsilon(y):=\int_{B_2^2}e_0^\epsilon(y,z)\,dz$, whose average on $B_1^{n-2}$ converges to $\theta_0$ (as $\mu_0^\epsilon\rightharpoonup^*\theta_0\cdot\mathcal{H}^{n-2}\mres P_0$).
	Fix a cut-off function $\chi\in C^1_c(B_2^{2})$ with $\chi=1$ on $B_1^{2}$ and consider a vector field $Y\in C^1_c(B_2^{n-2})$.
	We claim that
	$$ \left|\int_{B_2^{n-2}}h^\epsilon\divergence(Y)\right|\le\delta_\epsilon\|DY\|_{L^\infty} $$
	for a sequence $\delta_\epsilon\to 0$. Once this is done, the statement follows from Allard's strong constancy lemma \cite[Theorem~1.(4)]{Allard}.
	In order to show the claim, note that by \cref{vanishing.ann} we have
	$$ \int_{B_2^{n-2}}h^\epsilon\divergence(Y)=\int_{\R^n}h^\epsilon\divergence(\chi(z)Y(y))\,dy\,dz+o(\|DY\|_{L^\infty}). $$
	The latter integral equals
	$$ \int_{\R^n}\ang{e_0^\epsilon I,D(\chi(z) Y(y))}=\int_{\R^n}\ang{\mathcal{T}_0^\epsilon+(\nabla u^*\nabla u+\epsilon^2\omega^*\omega),D(\chi(z)Y(y))}\,dy\,dz, $$
	again up to errors (due to the change of metric) that vanish as $\epsilon\to 0$. The term $\nabla u^*\nabla u+\epsilon^2\omega^*\omega$ has a vanishing contribution on the region where $\chi(z)=1$, thanks to \cref{par.van.bis} and the fact that $Y$ is parallel to $P_0$, and also on the complement, thanks to \cref{vanishing.ann}.
	Hence,
	$$ \int_{B_2^{n-2}}h^\epsilon\divergence(Y)=-\int_{\R^n}\ang{\divergence(\mathcal{T}_0^\epsilon),\chi(z)Y(y)}\,dy\,dz+o(\|DY\|_{L^\infty}), $$
	and the conclusion follows from \cref{vanishing.div}.
\end{proof}

\begin{proposition}
	Eventually we can find $y_0=y_0^{\epsilon}\in B_1^{n-2}$ such that the following conclusions hold:
	$$ \sup_{r\in(0,1)}r^{2-n}\int_{B_r^{n-2}(y_0)\times [B_2^2\setminus B_1^2]}e_{0}^\epsilon(y,z)\,dy\,dz\to 0, $$
	\begin{align}\label{vanishing.quadris} &\sup_{r\in(0,1)}r^{2-n}\int_{ B_r^{n-2}(y_0)\times B_2^2\times\{0\}}(|\dot{u}|^2+\epsilon^2|\dot{\alpha}|^2)\,dy\,dz\to 0, \end{align}
	\begin{align}\label{near.flat.aux} &\sup_{r\in(0,1)}r^{2-n}\int_{ B_r^{n-2}(y_0)\times B_2^2\times\{0\}}\sum_{j=1}^{n-2}(|\nabla_{\de_j}u|^2+\epsilon^2|\omega(\de_j,\cdot)|^2)\,dy\,dz\to 0, \end{align}
	$$ \sup_{r\in(0,1)}r^{2-n}\int_{B_r^{n-2}(y_0)\times B_2^2}\lvert\divergence\mathcal{T}_0^\epsilon\rvert(y,z)\,dy\,dz\to 0, $$
	as well as
	\begin{align}\label{allard.max} &\sup_{r\in(0,1)}r^{2-n}\left|\int_{B_r^{n-2}(y_0)\times B_2^2}e_0^\epsilon(y,z)\,dy\,dz-\theta_0|B_r^{n-2}|\right|\to 0 \end{align}
	and, for some $C$ independent of $\epsilon$,
	\begin{align}\label{par.max.en} &\sup_{r\in(0,1)}r^{-n}\int_{ B_r^{n-2}(y_0)\times B_2^2\times[-r^2,r^2]}e_t^\epsilon(y,z)\,dt\,dy\,dz\le C. \end{align}
\end{proposition}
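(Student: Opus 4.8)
The plan is to run a routine maximal-function/pigeonhole selection in the $(n-2)$ spatial variables $y$, entirely parallel to the choice of $t_0^\epsilon$ carried out in the preceding step; there is no essential new difficulty. Recall that we have normalized $t_0^\epsilon=0$, so that all the densities in the first five conclusions refer to the shifted rescaled pair at time $0$. First I would package these into the scalar functions
\begin{align*}
  g_1^\epsilon(y)&:=\int_{B_2^2\setminus B_1^2}e_0^\epsilon(y,z)\,dz, &
  g_2^\epsilon(y)&:=\int_{B_2^2}(|\dot u|^2+\epsilon^2|\dot\alpha|^2)(y,z)\,dz, \\
  g_3^\epsilon(y)&:=\int_{B_2^2}{\textstyle\sum_{j=1}^{n-2}}(|\nabla_{\de_j}u|^2+\epsilon^2|\omega(\de_j,\cdot)|^2)(y,z)\,dz, &
  g_4^\epsilon(y)&:=\int_{B_2^2}|\divergence\mathcal{T}_0^\epsilon|(y,z)\,dz, \\
  g_5^\epsilon(y)&:=\Bigl|\int_{B_2^2}e_0^\epsilon(y,z)\,dz-\theta_0\Bigr|, & &
\end{align*}
all defined on $B_2^{n-2}$, and record that $S_\epsilon:=\sum_{i=1}^5\|g_i^\epsilon\|_{L^1(B_2^{n-2})}\to0$ as $\epsilon\to0$: for $g_1,\dots,g_4$ this is \cref{vanishing.ann}, \cref{vanishing.tris}, \cref{par.van.bis} and \cref{vanishing.div}, and for $g_5$ it is the preceding proposition (whose proof, via Allard's constancy lemma applied on $B_2^{n-2}$, in fact furnishes the $L^1(B_2^{n-2})$ bound).

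Next I would apply the weak-$(1,1)$ bound for the (centered) Hardy--Littlewood maximal operator $M$ on $\R^{n-2}$ with radii in $(0,1)$, noting that $\sup_{r\in(0,1)}r^{2-n}\int_{B_r^{n-2}(y)}g_i^\epsilon\le\omega_{n-2}\,Mg_i^\epsilon(y)$ for $y\in B_1^{n-2}$. With the threshold $\eta_\epsilon:=\sqrt{S_\epsilon}\to0$, the bad sets
$$ E_i^\epsilon:=\{y\in B_1^{n-2}:Mg_i^\epsilon(y)>\eta_\epsilon\},\qquad i=1,\dots,5, $$
then satisfy $|E_i^\epsilon|\le C\|g_i^\epsilon\|_{L^1}/\eta_\epsilon\le C\sqrt{S_\epsilon}\to0$. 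For the last conclusion \cref{par.max.en} I would introduce no new maximal function: since the parabolic ball $B_r^P(y_0,0)$ is the cylinder $B_r^{n-2}(y_0)\times(-r^2,r^2)$, of $\mathcal{L}^{n-1}$-measure comparable to $r^n$, the assertion is exactly that $M^Ph^\epsilon(y_0,0)\le C$, and by the proposition in which $t_0^\epsilon$ was selected the set $E_0^\epsilon:=\{y\in B_1^{n-2}:M^Ph^\epsilon(y,0)>K\}$ already has $|E_0^\epsilon|\le\tfrac12|B_1^{n-2}|$, for $K$ the constant fixed there and $\epsilon$ small.

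Finally, since $|E_0^\epsilon\cup\bigcup_{i=1}^5E_i^\epsilon|\le\tfrac12|B_1^{n-2}|+5C\sqrt{S_\epsilon}<|B_1^{n-2}|$ once $\epsilon$ is small, I would pick $y_0=y_0^\epsilon$ in the nonempty complement. Then $Mg_i^\epsilon(y_0)\le\eta_\epsilon\to0$ gives the first four conclusions (in particular \cref{vanishing.quadris} and \cref{near.flat.aux}) and \cref{allard.max}, using for the latter that $\bigl|\int_{B_r^{n-2}(y_0)\times B_2^2}e_0^\epsilon-\theta_0|B_r^{n-2}|\bigr|\le\int_{B_r^{n-2}(y_0)}g_5^\epsilon$; and $M^Ph^\epsilon(y_0,0)\le K$ is precisely \cref{par.max.en} with $C:=K$. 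The only point requiring a moment's care is that \cref{par.max.en} concerns the fixed time-slice $\{t=0\}$ while the weak-$(1,1)$ estimate available for $M^P$ lives on the full spacetime $\R^{n-2}\times\R$; but this is exactly the mismatch that the earlier choice of $t_0^\epsilon$ was designed to absorb, so nothing further is needed and there is no genuine obstacle in this proposition.
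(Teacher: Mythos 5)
Your proof is correct and follows essentially the same approach as the paper: integrate in the fiber to define scalar functions on $B_2^{n-2}$, note that their $L^1$ norms vanish (by the preceding propositions), invoke the weak-$(1,1)$ bound for the Hardy--Littlewood maximal operator with threshold $\sqrt{S_\epsilon}$, and intersect the resulting good sets with the (large) set $\{M^P h^\epsilon(\cdot,0)\le K\}$ from the $t_0$-selection step. One small observation in your favor: the paper states its elementary maximal-function lemma for $\sup_{r\in(0,1)}\int_{B_r^{n-2}(y)}f_k$ without the $r^{2-n}$ normalization, which as written would be trivial and useless — the intended statement clearly has the normalization, which you supply explicitly.
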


\begin{proof}
	In view of the previous proposition, we have $M^Ph^\epsilon(y_0,0)\le K$, and hence the last conclusion (with $C:=K$), for all $y_0\in B_1^{n-2}$ outside of a set of measure at most $\frac{1}{2}|B_1^{n-2}|$.
	
	The other bounds follow from the previous propositions, together with the following elementary fact: if we have $\int_{B_2^{n-2}}f_k\to 0$ for a family of nonnegative functions $f_k$, then there exists an infinitesimal sequence $\delta_k\to 0$ such that
	$$ \left|\left\{y\in B_1^{n-2}:\sup_{r\in(0,1)}\int_{B_r^{n-2}(y)}f_k>\delta_k\right\}\right|\le C(n)\delta_k. $$
	The latter fact follows by taking $\delta_k:=(\int_{B_2^{n-2}}f_k)^{1/2}$ and applying the weak-(1,1) bound for the maximal operator on the Euclidean space $\R^{n-2}$.
\end{proof}

\subsection{Analysis of the two-dimensional slice}
For each $s\in (\epsilon,1)$, consider the set
$$\mathcal{V}^{\epsilon}(s)\subseteq B_2^2(0)$$
consisting of those points $z\in B_2^2(0)$ for which
$$r(y_0,z,0)<s,$$
where we define $r(p,t)$ as in \cref{decay} by
$$r(p,t):=\sup\{s\in(0,1):|u|^2\geq 1-\beta\text{ on } \bar{B}_s(p)\times[t-s^2,t]\},$$
with $\beta=\beta_D$ the constant appearing there, and $r(p,t):=0$ if the last set is empty.

\begin{proposition}\label{conv.to.1}
	We have $|u^\epsilon|\to 1$, locally uniformly on $(\R^n\setminus P_0)\times\R$.
\end{proposition}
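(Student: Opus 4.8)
The plan is to combine the clearing-out lemma for values (\cref{clearing.values}) with the fact that the rescaled energy measures $\mu_t^\epsilon$ concentrate on $P_0$ in the limit. First I would fix a compact set $K\Subset(\R^n\setminus P_0)\times\R$, say $K\subseteq\Omega\times[a,b]$ with $\Omega\Subset\R^n\setminus P_0$ a bounded open set, together with a target accuracy $\delta\in(0,1)$, and obtain from \cref{clearing.values} the associated constants $\eta_V(\delta),c_V(\delta)>0$; by scale invariance (cf.\ \cref{scaled.bounds}) this lemma applies verbatim to the rescaled, time-shifted pairs $(u^\epsilon_t,\nabla^\epsilon_t)$ considered here, for $\epsilon$ small. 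The goal is then to show that for every $(x_0,t_0)\in K$ and every sufficiently small $\epsilon$ one has $\mu^\epsilon_{t_0-c_Vr^2}(B_r(x_0))\le\eta_V r^{n-2}$, for a fixed radius $r>0$ chosen small enough that the closed $r$-neighborhood $\Omega'$ of $\Omega$ is still disjoint from $P_0$; indeed, \cref{clearing.values} then forces $|u^\epsilon|^2(x_0,t_0)=|u^\epsilon|^2(x_0,(t_0-c_Vr^2)+c_Vr^2)\ge1-\delta$.

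The key point is to make this energy bound uniform over $(x_0,t_0)\in K$, and in particular uniform in time. I would fix a cut-off $\psi\in C^\infty_c(\R^n\setminus P_0;[0,1])$ with $\psi\equiv1$ on $\Omega'$, and prove that $\sup_{t\in[a-c_Vr^2,\,b]}\mu^\epsilon_t(\psi)\to0$ as $\epsilon\to0$. For this I would use two facts already available for the pairs at hand: (a) the fixed-time weak-$*$ convergence $\mu^\epsilon_t\rightharpoonup^*\theta_0\cdot\mathcal{H}^{n-2}\mres P_0$ for each $t\in\R$, which gives $\mu^\epsilon_t(\psi)\to0$ for each fixed $t$ since $\spt\psi\cap P_0=\emptyset$; and (b) the semi-decreasing property \cref{semidec}, which says that $t\mapsto\mu^\epsilon_t(\psi)-C(\psi)t$ is non-increasing, with $C(\psi)$ bounded independently of $\epsilon$ thanks to the energy bound \cref{ener.bd.bis}. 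Since a sequence of monotone functions on a compact interval converging pointwise to a continuous function converges uniformly, (a) and (b) together upgrade the pointwise convergence to the claimed uniform one. Granting this, for $\epsilon$ small we obtain $\mu^\epsilon_{t_0-c_Vr^2}(B_r(x_0))\le\mu^\epsilon_{t_0-c_Vr^2}(\psi)\le\eta_Vr^{n-2}$ for every $(x_0,t_0)\in K$ (also using that $r>\epsilon$ and $r\le\mz\inj$ in the rescaled metric once $\epsilon$ is small), and \cref{clearing.values} then yields $|u^\epsilon|^2\ge1-\delta$ on $K$. As $\delta\in(0,1)$ and $K$ are arbitrary, and $|u^\epsilon|\le1$ by the maximum principle (\cref{u.boch}), this gives $|u^\epsilon|\to1$ locally uniformly on $(\R^n\setminus P_0)\times\R$.

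I expect the main obstacle to be exactly the time-uniformity addressed in step (b): the clearing-out lemma needs the small-energy hypothesis on a single time slice $t_0-c_Vr^2$ that moves with $(x_0,t_0)$, while the weak-$*$ convergence of $\mu^\epsilon_t$ is only known slice by slice; it is the monotonicity in $t$ from \cref{semidec} that bridges this gap, via the elementary uniform-convergence statement for monotone functions. A secondary technical point is to confirm that \cref{clearing.values}---stated on the closed manifold $M$ with hypotheses $r\in(\epsilon,\mz\inj]$ and $t\ge t_0>0$---does transfer to the rescaled pairs on large Euclidean balls; but this is the content of \cref{scaled.bounds}, since under parabolic rescaling both the injectivity radius and the lower time threshold only improve, while all the estimates feeding into the proof of \cref{clearing.values} (the energy bound \cref{ener.bd.bis}, the Huisken monotonicity \cref{mono.prop}, the heat-kernel asymptotics of \cref{asympHK}) are scale-invariant.
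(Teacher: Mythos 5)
Your proof is correct and follows the paper's intended approach: the paper cites \cref{clearing.values} applied to the rescaled pairs (together with \cref{scaled.bounds}) as an immediate consequence, which is exactly what you do. You additionally spell out the one point that is not literally immediate---the uniformity in time needed to apply the clearing-out lemma at the moving slice $t_0-c_Vr^2$---and your bridge via the semi-decreasing property \cref{semidec} plus Dini/Polya upgrading of pointwise to uniform convergence for monotone functions is a clean and valid way to fill this gap.
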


\begin{proof}
	This is an immediate consequence of \cref{clearing.values}, applied to the rescaled pairs (recall also \cref{scaled.bounds}).
\end{proof}

\begin{lemma}\label{cover.bd}
	There exist constants $C$ and $N_0$, independent of $\epsilon$ and $s$, such that $\mathcal{V}^{\epsilon}(s)$ is covered by a collection of $N\leq N_0$ disks $B_{Cs}^2(z_1),\dots,B_{Cs}^2(z_N)$ of radius $Cs$. 
\end{lemma}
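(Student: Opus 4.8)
The plan is to combine the exponential decay estimate of \cref{decay} with the $(n-2)$-density upper bound of \cref{ener.bd.bis} (and its scaled version, using \cref{scaled.bounds}) to show that $\mathcal{V}^\epsilon(s)$ cannot be too ``spread out.'' First I would observe that on $\mathcal V^\epsilon(s)$ the energy density is \emph{not} exponentially small at scale $s$: by definition of $r(y_0,z,0)$, for every $z\in\mathcal V^\epsilon(s)$ there is a point $(x,t)$ with $x$ within parabolic distance $O(s)$ of $(y_0,z,0)$ at which $|u|^2<1-\beta$, hence (using the gradient bound from \cref{nabla.u.bd}, scaled) the set $\{|u|^2<1-\beta/2\}$ has $n$-dimensional measure at least $c\,\tilde\epsilon^{\,n}$ near that point, contributing energy $\gtrsim\tilde\epsilon^{-2}\tilde\epsilon^{\,n}=c\,\tilde\epsilon^{\,n-2}$. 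More to the point, I would use \cref{clearing.values} contrapositively at scale comparable to $s$: since $r(y_0,z,0)<s$, the hypothesis ``$\mu^\epsilon(B_{cs})\le\eta_V(cs)^{n-2}$'' of \cref{clearing.values} must fail at suitable centers and radii near $(y_0,z)$, giving a uniform \emph{lower} bound on the energy contained in a ball of radius $\sim s$ around each such point.

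Next I would run a standard Vitali-type covering/packing argument. Let $\{z_j\}$ be a maximal $5s$-separated subset of $\mathcal V^\epsilon(s)$; then the balls $B^2_{5s}(z_j)$ cover $\mathcal V^\epsilon(s)$, so it suffices to bound their number $N$. The balls $B^2_{s}(z_j)$ are pairwise disjoint, and by the preceding clearing-out argument each one carries energy on the slice $\{z\in B^2_s(z_j)\}$, i.e. $\int_{B_1^{n-2}(y_0)\times B^2_s(z_j)} e_0^\epsilon \gtrsim c>0$ — here I would integrate the clearing-out lower bound over a range of centers $y$ near $y_0$ (using \cref{near.flat.aux} and \cref{allard.max} to control the behavior in the $y$-directions), or more simply use that the two-dimensional slice energy $h^\epsilon(y,0)=\int_{B_2^2}e_0^\epsilon(y,z)\,dz$ is close to $\theta_0$ for $y$ near $y_0$ by \cref{allard.max}. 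Summing over the disjoint $B^2_s(z_j)$ and using the total bound $\int_{B_2^{n-2}(0)\times B_2^2}e_0^\epsilon\le C$ gives $N\cdot c\le C$, hence $N\le N_0:=C/c$, with $C$ the constant from \cref{par.max.en} / \cref{allard.max} and $c$ the clearing-out constant from \cref{clearing.values}; the covering disks have radius $Cs$ with $C=5$ (or whatever the separation constant produces).

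The main obstacle I expect is \textbf{extracting a clean lower energy bound on a single two-dimensional slice} from the clearing-out statements, which are naturally phrased in terms of the $n$-dimensional measure $\mu^\epsilon$ rather than slice energies $h^\epsilon(y,\cdot)$. The point is that \cref{clearing.values} and \cref{decay} give information at a single point $(y_0,z,0)$, but to contradict the upper bound $\int e_0^\epsilon\le C$ I need energy spread over a \emph{positive $(n-2)$-measure of $y$'s}. This is exactly what \cref{near.flat.aux} (smallness of the tangential derivatives $\sum_{j\le n-2}|\nabla_{\partial_j}u|^2$) is for: it says the rescaled solution near $y_0$ is essentially $y$-independent, so a non-trivial ``defect'' at $(y_0,z,0)$ propagates to a tube $B_{cs}^{n-2}(y_0)\times B_s^2(z)$, contributing energy $\gtrsim s^{n-2}$ there. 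Making this propagation quantitative — i.e. turning ``$|u|^2<1-\beta$ at $(y_0,z,0)$ and $\sum_{j\le n-2}|\nabla_{\partial_j}u|$ small in $L^2$'' into ``$|u|^2<1-\beta/2$ on a definite $(n-2)$-tube'' — is the one step that requires genuine care (a Chebyshev/good-$y$ argument combined with the $\tilde\epsilon^{-1}$ gradient bound in the $z$-directions); everything else is bookkeeping with the already-established monotonicity and density bounds.
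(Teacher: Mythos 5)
Your plan shares the two essential ingredients of the paper's proof---use \cref{clearing.values} contrapositively to obtain a scale-invariant energy lower bound near each $z\in\mathcal V^\epsilon(s)$, and close with a Vitali packing argument against a matching upper bound---but the implementation differs, and the difference matters in a way your ``main obstacle'' discussion doesn't quite capture.

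The paper never slices in $y$ and never works at the single time $t=0$. It works directly with the \emph{spacetime} measure $\mu^\epsilon$ on parabolic balls: from $r(y_0,z,0)<s$ one extracts a defect point $(x',t')\in\bar B_s(y_0,z)\times[-s^2,0]$ with $|u|^2(x',t')<1-\beta$; \cref{clearing.values} then gives $\mu_\tau^\epsilon(B_{4s}(x'))\gtrsim s^{n-2}$ for a time interval of $\tau$'s of length $\sim s^2$, all of them $\le t'\le 0$. Integrating in $\tau$ yields a lower bound $\gtrsim s^n$ on the $\mu^\epsilon$-measure of a parabolic ball about $(y_0,z,0)$, and a Vitali cover in the parabolic metric plus the parabolic maximal bound \cref{par.max.en} (which controls $r^{-n}\int_{B^{n-2}_r(y_0)\times B^2_2\times[-r^2,r^2]}e_t^\epsilon$) closes the count. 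Note that the $n$-ball lower bound from clearing-out already spreads the energy over a full tube in the $y$-directions automatically; no propagation via \cref{near.flat.aux} is needed, so the obstacle you flag is entirely sidestepped. (\cref{decay} plays no role in this lemma either; it enters later in \cref{e.decay.aux}. You do, however, need \cref{conv.to.1} to ensure the balls $B^2_{4s}(z')$ stay inside $B^2_2(0)$ before restricting to $\mathcal C$.)

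The genuine gap in your plan is \emph{temporal}, not spatial. You write that the defect sits ``at $(y_0,z,0)$,'' but $r(y_0,z,0)<s$ only places it somewhere in $\bar B_s(y_0,z)\times[-s^2,0]$, and \cref{clearing.values} then produces energy lower bounds at \emph{earlier} times $\tau<t'\le 0$, not at $t=0$. Your proposed fix---Chebyshev on the tangential-derivative bound \cref{near.flat.aux} to propagate the defect to a $y$-tube---addresses only a spatial propagation at the fixed time $t=0$ and does nothing to move the lower bound forward to $t=0$, where your chosen upper bound \cref{allard.max} lives. Bridging that would require a separate parabolic Harnack-type step. The paper's use of spacetime parabolic balls and \cref{par.max.en} makes this step unnecessary, which is precisely why it avoids both the slicing issue and the time-mismatch.
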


\begin{proof}
	We can assume without loss of generality that $s<\frac{1}{10}$.
	By definition, if $z\in \mathcal{V}_{\epsilon}(s)$, then $r(y_0,z,0)\leq s$, so there exists an antecedent point $(x',t')\in  \bar{B}_s(y_0,z)\times[-s^2,0]$ for which $|u^{\epsilon}|^2(x',t')< 1-\beta$. It then follows from \cref{clearing.values} that
	$$\mu_{\tau}^{\epsilon}(B_{4s}(x'))\ge cs^{n-2}$$
	for all $\tau\in [t'-4c's^2,t'-c's^2]$, for suitable constants $c,c'>0$.
	Also, denoting $x'=(y',z')$, by \cref{conv.to.1} we must have $|z'|<\frac{1}{10}$ eventually, and thus
	$$ B_{4s}^2(z')\subseteq B_2^2(0). $$
	Consequently, setting $\mathcal{C}:=\R^{n-2}\times B_2^2(0)\times\R$, we have
	\begin{equation}\label{mu.l.bd}
		(\mu^{\epsilon}\mres\mathcal{C})(B_{c_0s}(y_0,z)\times(-c_0s^2,c_0s^2) )\ge cc' s^n
	\end{equation}
	for some constant $c_0\ge 1$ and all $z\in \mathcal{V}_{\epsilon}(s)$. Next, note that the parabolic balls
	$$B^P_{c_0s}(y_0,z,0)\supseteq B_{s}^{n-2}(y_0)\times\{z\}\times(-s^2,s^2)$$
	cover 
	$$S:= B_s^{n-2}(y_0)\times \mathcal{V}^{\epsilon}(s)\times(-s^2,s^2),$$
	so we can apply Vitali's covering lemma (with respect to the parabolic metric) to deduce the existence of points
	$$z_1,\ldots,z_N\in \mathcal{V}^{\epsilon}(s)$$
	for which the balls $B^P_{c_0s}(y_0,z_j,0)$ are disjoint while $B^P_{5c_0s}(y_0,z_j,0)$ cover $S$. By the disjointness of the balls and \cref{mu.l.bd}, we have
	\begin{align*}
		&Ncc' s^n\leq \sum_{j=1}^N\mu^{\epsilon}\mres\mathcal{C}(B^P_{c_0s}(y_0,z_j,0))
		\leq \mu^{\epsilon}(B_{c_0s}(y_0)\times B_{2}^2(0)\times (-c_0^2s^2,c_0^2s^2)).
	\end{align*}
	Using \cref{par.max.en} (or \cref{ener.bd.bis} if $c_0s\ge 1$), it follows that
	$$Ncc's^n\leq C(c_0)s^n.$$
	In particular, $N\leq N_0$ is bounded independent of $\epsilon$ and $s$.
	Since the balls $B_{5c_0s}(z_j)$ cover the set $\mathcal{V}^\epsilon(s)$, we reach the desired conclusion.
\end{proof}

Choosing $s:=R\epsilon$ with $R$ large, after a possible reduction to make these disks disjoint and far apart, we will show that each carries an amount of energy close to $2\pi\N$.
Before doing that, we check that their complement gives a negligible contribution. For technical reasons, we look at an $\epsilon$-fattening of the slice $\{y_0\}\times\R^2$.

\begin{proposition}\label{e.decay.aux}
	We have
	$$ \lim_{R\to\infty}\lim_{\epsilon\to0}
	\epsilon^{2-n}\int_{B_\epsilon^{n-2}(y_0)\times[B_2^2(0)\setminus\mathcal{V}^\epsilon(R\epsilon)]}e_0^\epsilon(y,z)\,dy\,dz=0. $$.
\end{proposition}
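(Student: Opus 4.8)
The plan is to reduce the claim to the exponential decay estimate of \cref{decay}, applied to the rescaled pairs (which is legitimate exactly as in the proofs of \cref{conv.to.1} and \cref{cover.bd}, the rescaled time $0$ corresponding to a positive time close to $T$ in the original flow), combined with the area bound for sublevel sets of $r$ furnished by \cref{cover.bd}. Write $\rho(z):=r(y_0,z,0)$, so that $B_2^2(0)\setminus\mathcal{V}^\epsilon(R\epsilon)=\{z\in B_2^2(0):\rho(z)\ge R\epsilon\}$. First I would record that $\rho$ is stable under small horizontal perturbations: if $R\ge 2$ and $\rho(z)\ge R\epsilon$, then for every $y\in B_\epsilon^{n-2}(y_0)$ one has $\bar B_{\rho(z)/2}(y,z)\subseteq\bar B_{\rho(z)}(y_0,z)$ and $[-(\rho(z)/2)^2,0]\subseteq[-\rho(z)^2,0]$, so that $r(y,z,0)\ge\rho(z)/2$ directly from the definition of $r$. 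Feeding this into \cref{decay} gives
\begin{equation*}
	e_0^\epsilon(y,z)\le\frac{C}{\epsilon^2}e^{-a_D\rho(z)/(2\epsilon)}+C\epsilon^2
	\qquad\text{for all }y\in B_\epsilon^{n-2}(y_0),\ \rho(z)\ge R\epsilon.
\end{equation*}
Integrating over $y\in B_\epsilon^{n-2}(y_0)$ and multiplying by $\epsilon^{2-n}$, the term $C\epsilon^2$ contributes $O(\epsilon^2)$, and it remains to control $\epsilon^{-2}\int_{\{z\in B_2^2(0):\rho(z)\ge R\epsilon\}}e^{-a_D\rho(z)/(2\epsilon)}\,dz$.

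For this main term, the idea is to slice $\{\rho\ge R\epsilon\}$ into the dyadic pieces $\{2^kR\epsilon\le\rho<2^{k+1}R\epsilon\}$, $k\ge0$. Each such piece is contained in $\mathcal{V}^\epsilon(2^{k+1}R\epsilon)$, hence by \cref{cover.bd} has two-dimensional area at most $C(2^{k+1}R\epsilon)^2$ (when $2^{k+1}R\epsilon\ge1$ this is trivial, since the set lies inside $B_2^2(0)$), while $e^{-a_D\rho/(2\epsilon)}\le e^{-a_D2^{k-1}R}$ there. Summing,
\begin{equation*}
	\epsilon^{-2}\int_{\{\rho\ge R\epsilon\}}e^{-a_D\rho/(2\epsilon)}\,dz
	\le CR^2\sum_{k\ge0}4^ke^{-a_D2^{k-1}R}
	\le CR^2e^{-a_DR/2},
\end{equation*}
the last step being valid once $R$ is large enough that the $k=0$ term dominates the super-geometrically convergent series. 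Altogether
\begin{equation*}
	\epsilon^{2-n}\int_{B_\epsilon^{n-2}(y_0)\times[B_2^2(0)\setminus\mathcal{V}^\epsilon(R\epsilon)]}e_0^\epsilon\,dy\,dz
	\le CR^2e^{-a_DR/2}+C\epsilon^2,
\end{equation*}
and sending $\epsilon\to0$ and then $R\to\infty$ finishes the proof.

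The conceptual heart, and the step most worth getting right, is the interplay between the $\epsilon^{-2}$ prefactor in the energy density and the exponential decay: the crude bound $e_0^\epsilon\le C\epsilon^{-2}e^{-a_DR}$ on the whole annulus only yields $C\epsilon^{-2}e^{-a_DR}$ after integration, which diverges as $\epsilon\to0$ for fixed $R$ (this is exactly why the statement is phrased as the iterated limit $\lim_R\lim_\epsilon$). What saves the day is precisely that the sublevel set $\{\rho<s\}$ has two-dimensional area $O(s^2)$ --- the content of \cref{cover.bd} --- so that at the relevant scale $\rho\sim R\epsilon$ the area $O(R^2\epsilon^2)$ exactly absorbs the $\epsilon^{-2}$, leaving the harmless factor $R^2e^{-a_DR/2}$. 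The only other point requiring a little care is the verification that \cref{decay} may be invoked for the rescaled pairs at (shifted) time $0$; this goes through as in the earlier applications of \cref{clearing.values} to the rescaled pairs, using scale invariance (cf.\ \cref{scaled.bounds}), which in fact makes the error term even smaller than the $C\epsilon^2$ above.
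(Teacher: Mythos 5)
Your proof is correct and follows essentially the same route as the paper's: stability of $r(\cdot,0)$ under $O(\epsilon)$-perturbations of the $y$-coordinate (the paper invokes $1$-Lipschitzness, you argue directly from the definition), then the exponential decay of \cref{decay}, and finally the area bound $|\mathcal{V}^\epsilon(s)|\le Cs^2$ from \cref{cover.bd} to absorb the $\epsilon^{-2}$ prefactor. The only cosmetic difference is the last step, where the paper uses a layer-cake integration by parts to reduce to $\int_R^\infty e^{-a\sigma}\sigma^2\,d\sigma$ while you sum a dyadic decomposition to get $CR^2e^{-a_D R/2}$; both give the same conclusion.
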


\begin{proof}
	It is easy to see that the function $r(\cdot,0)$ is $1$-Lipschitz on $B_2^{n-2}(0)\times B_2^2(0)$, so that 
	$$r(y,z,0)\geq r(y_0,z,0)-\epsilon\ge \frac{r(y_0,z,0)}{2}$$
	for $y\in B_{\epsilon}^{n-2}(y_0)$ and $z\nin\mathcal{V}^\epsilon(2\epsilon)$. By \cref{decay}, we have
	\begin{equation}\label{exp.dec}
		e_t^{\epsilon}(u_{\epsilon},\nabla_{\epsilon})(p)\leq \frac{C}{\epsilon^2}e^{-2a r(p,t)/\epsilon}+C\epsilon^2
	\end{equation}
	for some positive constants $C$ and $a$.
	Thus, for any $R\ge 2$, we see that
	\begin{align*}
		\int_{B_{\epsilon}^{n-2}(y_0)\times [B_2^2(0)\setminus\mathcal{V}^{\epsilon}(R\epsilon)]}
		e_0^{\epsilon}(u_{\epsilon},\nabla_{\epsilon})
		&\leq C\int_{B_{\epsilon}^{n-2}(y_0)\times [B_2^2(0)\setminus\mathcal{V}^{\epsilon}(R\epsilon)]}
		\epsilon^{-2}e^{-2a r(y,z,0)/\epsilon}\,dy\,dz+C\epsilon^n\\
		&\leq C\epsilon^{n-2}\int_{B_2^2(0)\setminus \mathcal{V}^{\epsilon}(R\epsilon)}\epsilon^{-2}e^{-ar(z)/\epsilon}\,dz+C\epsilon^n,
	\end{align*}
	where we abbreviate $r(z):=r(y_0,z,0)$.
	We now bound the last integral using the following fact: setting $\Omega:=B_2^2(0)\setminus\mathcal{V}^\epsilon(R\epsilon)$, for any smooth function $0\le f\in C^\infty([0,\infty))$ with fast decay at infinity,
	$$ \int_{\Omega}f(r(z))\,dz=\int_{R\epsilon}^\infty (-f'(s))|\Omega\cap\mathcal{V}^\epsilon(s)|\,ds; $$
	this easily follows by writing $f(\lambda)=\int_\lambda^\infty(-f')(s)\,ds$ and using Fubini's theorem.
	Applying this in our case, we get
	\begin{align*}
		\int_{\Omega}\epsilon^{-2}e^{-ar(z)/\epsilon}\,dz
		&\leq \int_{R\epsilon}^\infty\frac{ae^{-as/\epsilon}}{\epsilon^3}|\Omega\cap\mathcal{V}^{\epsilon}(s)|\,ds \\
		&\leq \int_{R\epsilon}^\infty\frac{ae^{-as/\epsilon}}{\epsilon^3}\cdot Cs^2\,ds \\
		&=Ca\int_R^\infty e^{-a\sigma}\sigma^2\,d\sigma,
	\end{align*}
	where we used \cref{cover.bd} to bound $|\Omega\cap\mathcal{V}^{\epsilon}(s)|\le Cs^2$.
	Since the last integral is infinitesimal as $R\to\infty$, the claim follows.
\end{proof}

Next, we fix a large $R\ge 2$, and apply \cref{cover.bd} with $s=R\epsilon$ to get points
$z_1^{\epsilon},\dots,z_N^{\epsilon}\in B_2^2(0)$ such that the disks $B_{CR\epsilon}^2(z_j^{\epsilon})$ cover $\mathcal{V}^{\epsilon}(R\epsilon)$. Passing to a subsequence as $\epsilon\to 0$, we may assume moreover that $N$ is independent of $\epsilon$ and that the (possibly infinite) limits
$$r_{ij}:=\lim_{\epsilon\to 0}\frac{|z_i^{\epsilon}-z_j^{\epsilon}|}{\epsilon}$$
exist. We can now find a subcollection of indices, as well as a constant $C'\ge CR$ (depending on $R$, but not on $\epsilon$), such that the dilated disks $B_{C'\epsilon}^2(z_i^\epsilon)$ are disjoint, and their union includes all the original disks: to get this,
we introduce an equivalence relation on indices, such that the equivalence class of $i$ is $[i]=\{j:r_{ij}<\infty\}$, and we consider a subcollection given by choosing one representative for each class. Clearly, taking
$$ C':=CR+\max\{r_{ij}\mid r_{ij}<\infty\}+1, $$
the previous claim holds for $\epsilon$ small enough. Up to rearranging indices, we assume that the subcollection is $\{1,\dots,N'\}$. In summary, for $\epsilon$ small enough, we have
\begin{align}\label{disj.union}
	&\mathcal{V}^\epsilon(R\epsilon)\subseteq\bigsqcup_{i=1}^{N'}B_{C'\epsilon}^2(z_i^\epsilon)\subseteq B_2^2(0)
\end{align}
(where the union is disjoint), and
\begin{align}\label{dist.to.infty}
	&\frac{|z_i^\epsilon-z_j^\epsilon|}{\epsilon}\to\infty\quad\text{for }1\le i<j\le N'.
\end{align}

Recall that $e_t^\epsilon\le\frac{S}{\epsilon^2}$, for a suitable constant $S\ge C'$, by \cref{prop.nabla.u.bd} (recall \cref{scaled.bounds}).
We then see that, for this $S$ and for any $\delta\in (0,\frac{1}{2})$, the rescaled pair
$$ (\tilde u,\tilde\alpha)(y,z):=(u^\epsilon,\epsilon\alpha^\epsilon)(y_0+\epsilon y,z_i^\epsilon+\epsilon z), $$
for $i=1,\dots,N'$, $R$ large enough, and $\epsilon$ small enough, satisfies all the assumptions of \cref{blowup.lim} below, which is a variant of \cite[Proposition~6.7]{PigatiStern}, where we weaken the restriction that the pairs solve the $U(1)$-Higgs equations to the requirement that they nearly solve it in an $L^2$ sense.

In more detail,
note that $B_{\kappa^{-1}\epsilon}^2(z_i^\epsilon)\setminus \bar B_{S\epsilon}^2(z_i^\epsilon)$ is eventually disjoint from $\mathcal{V}^\epsilon(R\epsilon)$, owing to \cref{dist.to.infty} and the inclusion $B_{S\epsilon}^2(z_i^\epsilon)\supseteq B_{C'\epsilon}^2(z_i^\epsilon)$.
Hence, \cref{vort.contr} holds since eventually, in terms of our pairs, $|u|^2(y,z)\ge1-\beta$
for $y\in B_\epsilon^{n-2}(y_0)$ and $z\in B_{\kappa^{-1}\epsilon}^2(z_i^\epsilon)\setminus \bar B_{S\epsilon}^2(z_i^\epsilon)$
(as $r(y,z,0)\ge r(z)-\epsilon\ge R\epsilon-\epsilon>0$ here).
Also, \cref{near.flat} holds by \cref{near.flat.aux}. Moreover, \cref{e.decay} holds thanks to \cref{e.decay.aux}. Finally, \cref{near.sol} holds by \cref{vanishing.quadris} and the gradient flow equations.

\begin{lemma}\label{blowup.lim} Given $\delta\in(0,\frac{1}{2})$ and $S>1$, there exist $K(\delta,S)>S$ and $0<\kappa(\delta,S,n)<K^{-1}$ such that the following holds. Assume $(u,\nabla)$ is a smooth pair with $|u|\le1$ on the trivial line bundle over a cylinder $Q=B_{\kappa^{-1}}^{n-2}\times B^2_{\kappa^{-1}}$, endowed with a metric $g$, satisfying
	$$ \|g-\delta\|_{C^2}\le\kappa, $$
	$$e^1(u,\nabla)\leq S,$$
	\begin{equation}\label{vort.contr}
		\{|u|^2<1-\beta\}\cap (B_1^{n-2}\times B_{\kappa^{-1}}^2)\subseteq B_1^{n-2}\times \bar{B}_S^2,
	\end{equation}
	\begin{equation}\label{near.flat}
		\int_Q\sum_{j=1}^{n-2}(|\nabla_{\de_j} u|^2+|\omega(\de_j,\cdot)|^2)\leq \kappa,
	\end{equation}
	\begin{equation}\label{e.decay}
		\int_{B_1^{n-2}\times [B_{\kappa^{-1}}^2\setminus B_S^2]}e^1(u,\nabla)\leq \omega_{n-2}\delta,
	\end{equation}
	and
	\begin{equation}\label{near.sol}
		\int_Q\left|\nabla^*\nabla u-\frac{(1-|u|^2)u}{2}\right|^2+|d^*\omega-\langle iu,\nabla u\rangle|^2\le\kappa.
	\end{equation}
	Then
	$$\left|\int_{B_1^{n-2}\times B_K^2}e^1(u,\nabla)-2\pi\omega_{n-2} |D|\right|<2\omega_{n-2}\delta,$$
	where $D$ is the degree of the $S^1$-valued map $u/|u|$ on $\{0\}\times \partial B_S^2$.
\end{lemma}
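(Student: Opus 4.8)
The overall strategy is to reduce the statement about a nearly-solving pair on a large $(n-2+2)$-dimensional cylinder to the known planar energy quantization for entire $U(1)$-Higgs solutions, essentially as in \cite[Proposition~6.7]{PigatiStern}, but replacing the exact Euler--Lagrange equations with an $L^2$-approximate version via elliptic estimates. First I would fix the two-dimensional slice $\{0\}\times B^2_{\kappa^{-1}}$ (or, more honestly, average over a thin family of slices $\{y\}\times B^2_{\kappa^{-1}}$ with $y\in B_1^{n-2}$ small) and observe that by \cref{near.flat} and a mean-value/Chebyshev argument, for a set of $y$ of measure at least $\frac12|B_1^{n-2}|$ the ``tangential'' energy $\sum_{j=1}^{n-2}(|\nabla_{\partial_j}u|^2+|\omega(\partial_j,\cdot)|^2)$ integrated over $\{y\}\times B^2_{\kappa^{-1}}$ is at most $C\kappa$, and similarly by \cref{near.sol} the slice-wise defect $\int_{\{y\}\times B^2_{\kappa^{-1}}}(|\nabla^*\nabla u-\frac{(1-|u|^2)u}{2}|^2+|d^*\omega-\langle iu,\nabla u\rangle|^2)$ is $\le C\kappa$. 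On such a slice, the restricted pair $(u|,\nabla|)$ is a planar pair on $B^2_{\kappa^{-1}}$ which \emph{nearly} solves the two-dimensional $U(1)$-Higgs equations in $L^2$, with vortex set contained in $\bar B^2_S$ by \cref{vort.contr}, total energy $\le C S$, and energy outside $B^2_S$ bounded by $\omega_{n-2}\delta/|B^{n-2}_1|$-type quantities coming from \cref{e.decay}.

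Next I would run a compactness argument in $\epsilon$ (equivalently, in $\kappa\to 0$): take a sequence of such pairs, work in the Coulomb gauge near the region where $|u|\ge \frac12$ (which includes all of $B^2_{\kappa^{-1}}\setminus \bar B^2_S$ by \cref{vort.contr}), and use elliptic regularity for the approximate equations---the $L^2$-smallness of the defect in \cref{near.sol} upgrades, via interior Schauder/$L^p$ estimates on dyadic annuli where $|u|$ is bounded away from $0$, to $C^1_{loc}$ convergence on $B^2_{\kappa^{-1}}\setminus \bar B^2_{2S}$---to extract a limit pair that \emph{exactly} solves the planar $U(1)$-Higgs equations away from $\bar B^2_{2S}$, has finite energy, and is a locally smooth entire solution after removing the (bounded) singular set; by the classification/quantization of finite-energy entire solutions on $\mathbb{R}^2$ (the degree-$D$ vortex solutions of Taubes, cf.\ \cite{Taubes1, JaffeTaubes}), its energy on a large ball equals $2\pi|D|$ up to an error going to $0$ as the ball radius grows. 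The degree $D$ of $u/|u|$ on $\{0\}\times \partial B^2_S$ is locally constant in $y$ and stable under the $C^1$ convergence, so it is well-defined and matches the limit. Since this holds for a positive-measure set of slices $y$ and the total energy $\int_{B_1^{n-2}\times B_K^2}e^1$ is, by \cref{near.flat} again, within $C\kappa$ of $|B_1^{n-2}|$ times the average slice energy, choosing $K$ large and then $\kappa$ small gives the claimed bound $|\int_{B_1^{n-2}\times B_K^2}e^1-2\pi\omega_{n-2}|D||<2\omega_{n-2}\delta$.

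\textbf{The main obstacle.} The delicate point is the passage from the $L^2$-approximate equations \cref{near.sol} to genuine $C^1_{loc}$ control of the planar slices away from the vortex set: one must choose a good gauge, absorb the change-of-metric error (controlled by $\|g-\delta\|_{C^2}\le\kappa$), and bootstrap an $L^2$ bound on the nonlinear residual into pointwise convergence, being careful that the ``bad'' set $\{|u|\le \frac12\}$ stays trapped in $\bar B^2_S$ uniformly---here \cref{vort.contr} is exactly what is needed---and that no energy leaks out across annuli, which is where \cref{e.decay} enters. A secondary subtlety is ensuring that the slicing in $y$ can be done simultaneously for all four smallness hypotheses (tangential energy, the $L^2$ defect, the annular energy, and the maximal-function bound), which is a routine union-bound but must be stated with care. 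Once the planar slices are shown to converge to entire solutions, invoking the known quantization $\mathcal{E}=2\pi|D|$ and its stability is standard.
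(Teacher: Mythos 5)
Your overall shape is right—argue by contradiction, pass to a $W^{1,2}$ limit, identify the limit with a planar finite-energy critical pair, and invoke the $2\pi|D|$ quantization—but two of your key intermediate steps do not hold with the stated hypotheses, and the paper's proof is specifically organized to avoid them.

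First, the slicing-first step. Restricting $(u,\nabla)$ to a two-dimensional slice $\{y\}\times B^2_{\kappa^{-1}}$ does \emph{not} produce a planar pair that nearly solves the two-dimensional $U(1)$-Higgs equations in $L^2$. The full operator $\nabla^*\nabla u$ contains $-\sum_{j\le n-2}\nabla_{\partial_j}\nabla_{\partial_j}u$, i.e.\ second covariant derivatives in the $y$-directions, and similarly $d^*\omega$ contains $\partial_j$-derivatives of $\omega(\partial_j,\cdot)$. Hypothesis \cref{near.flat} bounds only $\nabla_{\partial_j}u$ and $\omega(\partial_j,\cdot)$ in $L^2$, not their further derivatives, so the two-dimensional defect on a fixed slice is not small. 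The paper circumvents this by running the compactness argument in the full $n$-dimensional cylinder first, obtaining strong $W^{1,2}_{\mathrm{loc}}$ convergence there, and only \emph{after} the limit invoking \cref{near.flat} to conclude the limit pair is $P_0$-invariant (at which point slicing the now-smooth limit is harmless).

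Second, the regularity claim. Even setting the slicing issue aside, the defect in \cref{near.sol} is only controlled in $L^2$, so the Coulomb-gauge bootstrap yields at most $W^{2,2}_{\mathrm{loc}}$ (hence $C^{0,\alpha}_{\mathrm{loc}}$ in two dimensions), never $C^1_{\mathrm{loc}}$: $\Delta u_j$ is only bounded in $L^2$, and to reach $C^1$ you would need the residual in $L^p$ for some $p>2$, which the hypotheses do not provide. The paper explicitly flags that "we can only invoke $W^{1,2}$ convergence of the pairs $(u_j,\alpha_j)$, as opposed to the full $C^1$ convergence," and for precisely this reason it states the conclusion in terms of $\int_{B_1^{n-2}\times B_K^2}e^1$ (an $n$-dimensional integral, where $L^1$ convergence of the energy density suffices) rather than the two-dimensional slice integral, which is the object your $C^1$-based argument would need. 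So you should replace "slice, then bootstrap to $C^1$, then compactify" by the paper's "compactify in $n$ dimensions in $W^{1,2}_{\mathrm{loc}}$, use \cref{near.flat} and \cref{near.sol} on the limit to get a smooth $P_0$-invariant exact solution, then quantize," and keep the conclusion at the level of the $n$-dimensional cylinder integral.
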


\begin{proof}
	We can almost repeat the proof of \cite[Proposition~6.7]{PigatiStern} verbatim, with a few notable changes.
	Arguing by contradiction, we can again consider pairs $(u_j,\nabla_j=d-i\alpha_j)$
	defined on larger and larger domains.
	
	In contrast to the quoted proof, in our situation these pairs are only approximate solutions in the sense of \cref{near.sol}; however, in a local Coulomb gauge on a smooth domain, the pair $(u_j,\alpha_j)$ is still precompact in $W^{1,2}_{loc}$.
	Indeed, since $|\ang{iu,\nabla u}|\le\sqrt{S}$, \cref{near.sol}
	gives a local $L^2$ bound on $\Delta_H\alpha_j=d^*d\alpha_j$, and hence a local $W^{2,2}$ bound for $\alpha_j$, meaning that $\alpha_j$ is precompact in $W^{1,2}_{loc}$. Since $|(d-i\alpha_j)u_j|\le\sqrt{S}$ and $|u_j|\le 1$, this in particular gives a local $W^{1,2}$ bound for $u_j$.
	Finally, noting that $\Delta u=-\nabla^*\nabla u_j+2\ang{i\alpha_j,du_j}+|\alpha_j|^2u_j$ is locally bounded in $L^p$ for some $p>1$ (by Sobolev embedding), we obtain precompactness of $u$ in $W^{1,p}_{loc}$.
	Using also the previous pointwise bound, we then see that $(d-i\alpha_j)u_j$ is precompact in $L^2_{loc}$, which gives precompactness of $u$ in $W^{1,2}_{loc}$.
	
	Hence, as in \cite[Proposition~6.7]{PigatiStern}, after a change of gauge (obtained by interpolating between a local Coulomb gauge and the gauge such that $u_j=|u_j|e^{iD\theta}$), we still have precompactness in $W^{1,2}_{loc}$ on (domains converging to) $B_1^{n-2}\times\R^2$, obtaining a strong $W^{1,2}$ limit $(u_\infty,d-i\alpha_\infty)$ here. This limit is a critical pair, and hence smooth (see the appendix of \cite{PigatiStern}).
	
	Thanks to \cref{near.flat}, this limit pair is a $P_0$-invariant solution on $B_1^{n-2}\times\R^2$, up to a further change of gauge (see the proof of \cite[Proposition~6.7]{PigatiStern} for the details). Moreover, it follows from \cref{e.decay} that this limiting solution has finite energy, and its energy on each two-dimensional slice $\{y\}\times\R^2$ differs from the energy on $\{y\}\times B_K^2$ by at most $\delta$, regardless of the choice of $K>S$.
	
	The rest of the proof goes through essentially unchanged. In the conclusion, we consider the energy on $B_1^{n-2}\times B_K^2$ rather than the two-dimensional slice $\{0\}\times B_K^2$, since we can only invoke $W^{1,2}$ convergence of the pairs $(u_j,\alpha_j)$, as opposed to the full $C^1$ convergence.
\end{proof}

In particular, going back to our pairs, for any fixed $\delta>0$, for $R$ sufficiently large and $\epsilon$ sufficiently small, it follows that
$$\dist\left(\frac{1}{|B_{\epsilon}^{n-2}|}\int_{B_{\epsilon}^{n-2}(y_0)\times B_{K\epsilon}^2(z_i^{\epsilon})}e^{\epsilon}_0,2\pi \mathbb{N}\right)<2\delta,$$
and consequently, summing over $i=1,\ldots,N'$ and recalling \cref{e.decay.aux}, as well as \cref{disj.union}, \cref{dist.to.infty}, and the fact that $K\ge S\ge C'$, we deduce that
$$\dist\left(\frac{1}{|B_{\epsilon}^{n-2}|}\int_{B_{\epsilon}^{n-2}(y_0)\times B_2^2(0)}e^{\epsilon}_0,2\pi \mathbb{N}\right)<4N\delta.$$
In particular, recalling \cref{allard.max} and letting $\delta\to 0$, we obtain $\theta_0\in 2\pi\N$.

\section{Brakke's inequality and proof of \cref{thmBrakke}}\label{brakke.sec}
To conclude the proof of \cref{thmBrakke} we need to prove that the limiting measures $\mu_t$ satisfy Brakke's inequality \cref{brakke.ineq.bis}. This last result can be seen by analyzing separately the shrinking and transport terms appearing in the first variation \cref{epsBrakke} of the measure $\mu^{\epsilon}_{t}$. Intuitively, Brakke's inequality will follow if we can establish the following two inequalities: 
\begin{equation*}
	\int_M \langle d \phi, (T_x^\perp\mu_t)  \vec{H}_t \rangle \, d\mu_t \geq \limsup_{\epsilon \to 0} \int_M \langle \divergence(\mathcal{T}^\epsilon_t), d \phi \rangle
\end{equation*}
and 
\begin{equation*}
	\int_M \phi |\vec H_t|^2 \, d\mu_t \leq \liminf_{\epsilon \to 0} \int_M 2 \phi \left( \vert \dot{u}_{t}^\epsilon \vert^2 + \epsilon^2 \vert \dot{\alpha}_{t}^\epsilon \vert^2 \right), 
\end{equation*}
for any $\phi \in C^2(M, \mathbb{R}_{\ge 0})$. In other words, the transport and shrinking terms of our  $\epsilon$-variation should be controlled by the corresponding terms of Brakke's inequality, at least in an integral sense.

As in \cref{gen.tan.flows}, from the stress-energy tensors $\mathcal{T}_{t}^{\epsilon}$ of $(u_t^{\epsilon}, \nabla_t^{\epsilon})$, we can extract a subsequential weak-* limit 
$$d\mathcal{T}=\lim_{\epsilon\to 0}\mathcal{T}_t^{\epsilon}\,\dvol\otimes dt$$
as measures taking values in the pullback bundle of $\operatorname{Sym}(TM)$ over $M\times [0,\infty)$.
By virtue of the bound $|\mathcal{T}|\le C\mu$ and the fact that $\mu=\mu_t\otimes\mathcal{L}^1(t)$, we can write
$$d\mathcal{T} = d\mathcal{T}_t \otimes dt= A(x,t)\, d\mu_t(x)\otimes dt.$$
Moreover, it follows from the results of \cref{tf.sec} that $A(x,t)$ coincides with the unique tangent plane to $\mu_t$ at $\mu$-a.e.\ point $(x,t)\in M\times (0,\infty)$, so that the measures $\mathcal{T}_t$, which are defined for a.e.\ $t$, can be identified with a family of rectifiable $(n-2)$-varifolds, whose first variation along a vector field $X\in C^1(M)$ is given by
$$\delta \mathcal{T}_t(X)=\langle DX,\mathcal{T}_t\rangle. $$

Abusing notation slightly, for $\epsilon>0$ we write
$$\delta \mathcal{T}_t^{\epsilon}(X):=\int_M\langle DX,\mathcal{T}_t^{\epsilon}\rangle\,\dvol_g=-\int_M\langle X, \divergence (\mathcal{T}_t^{\epsilon})\rangle\, \dvol_g$$
and recast \cref{epsBrakke} as
\begin{equation}
	\frac{d}{dt} \int_{M} \phi e^\epsilon(u_t^\epsilon, \nabla_t^\epsilon) = \int_M - 2 \phi \left( \vert \dot{u}_t^\epsilon \vert^2 + \epsilon^2 \vert \dot{\alpha}_t^\epsilon \vert^2 \right) - \delta \mathcal{T}_{t}^{\epsilon}(d \phi).
\end{equation} 
In other words the transport term in \cref{epsBrakke} can be rewritten as the first variation of the stress-energy tensor $\mathcal{T}_{t}^{\epsilon}$. Reasoning analogously to \cite{Ilmanen} and \cite{AmbrosioSoner}, we expect it to converge to the mean curvature vector in a weak sense, while we expect the shrinking term to bound the corresponding term in Brakke's inequality. To prove this, introduce the vector-valued measures 
\begin{equation*}
	d\sigma^\epsilon = d\sigma_t^\epsilon\otimes dt := \divergence(\mathcal{T}^\epsilon_t) \,\dvol_g\otimes dt. 
\end{equation*}
By Cauchy--Schwarz and Young's inequality, together with \cref{div.id}, these measures are uniformly bounded in $(C^0)^*$ on $M \times [0, T]$, for any $T > 0$. Thus, after possibly passing to a subsequence, we can assume that $\sigma^\epsilon \rightharpoonup^* \sigma$ on $M \times [0, \infty)$ for a vector-valued Radon measure $\sigma$. 
Moreover, invoking the lower semicontinuity property from \cite[Remark~2.2]{AmbrosioSoner}, we have the inequalities
\begin{align*}
	\int_{M\times (0,\infty)} \left\vert \frac{d\sigma}{d\mu} \right\vert^2 \, d\mu 
	& \leq \liminf_{\epsilon\to 0} \int_{M\times (0,\infty)} \left\vert \frac{d\sigma^{\epsilon}}{d\mu^{\epsilon}} \right\vert^2 \, d\mu^{\epsilon} \\
	& \leq \liminf_{\epsilon\to 0} \int_{M\times (0,\infty)} \frac{\lvert \divergence(\mathcal{T}_{t}^{\epsilon}) \rvert^2}{e^{\epsilon}(u_t^\epsilon, \nabla_t^\epsilon)} \, \dvol_g\, dt \\
	& \leq \liminf_{\epsilon\to 0} \int_{M\times (0,\infty)} 4 \left( \vert \dot{u}_{t}^{\epsilon} \vert^2 + \epsilon^2 \vert \dot{\alpha}_{t}^{\epsilon} \vert^2 \right) \, \dvol_g \,dt \\
	& \leq 2\liminf_{\epsilon\to 0} E_{\epsilon}(u_0^\epsilon, \nabla_0^\epsilon) \\
	& \leq 2\Lambda, 
\end{align*} 
where we used \cref{div.id} and Cauchy--Schwarz in the third inequality, and \cref{enerID} in the last one. It also follows from \cite[Remark~2.2]{AmbrosioSoner} that $\sigma$ is absolutely continuous with respect to $\mu$. Thus, on $M\times(0,\infty)$ we can write
$$ d\sigma=\vec H_t\,d\mu_t\otimes dt $$
for a suitable vector-valued density $\vec H_t$, which satisfies
\begin{align}\label{br.sum.chain}
	&\int_{0}^\infty \left( \int_M \vert \vec{H}_t \vert^2 \, d\mu_t \right) \,dt\le2\Lambda.
\end{align}
A local version of this bound follows similarly, giving
\begin{equation*}
	\int_s^t\int_{M} \phi \vert \vec{H}_\tau \vert^2 \, d\mu_\tau \, d\tau \leq \liminf_{\epsilon\to0} \int_s^t\int_{M} 4 \phi \left( \vert \dot{u}_{\tau}^{\epsilon} \vert^2 + \epsilon^2 \vert \dot{\alpha}_{\tau}^{\epsilon} \vert^2 \right) \, \dvol_g \,d\tau, 
\end{equation*} 
for $\phi \in C^1(M,\R_{\ge0})$ and $0<s<t$.
We can easily make the last bound into a sharp one, replacing the factor $4$ by $2$.

\begin{proposition}\label{sharper.brakke}
	We have $$\int_s^t\int_{M} \phi \vert \vec{H}_\tau \vert^2 \, d\mu_\tau \, d\tau \leq \liminf_{\epsilon\to0} \int_s^t\int_{M} 2 \phi \left( \vert \dot{u}_{\tau}^{\epsilon} \vert^2 + \epsilon^2 \vert \dot{\alpha}_{\tau}^{\epsilon} \vert^2 \right) \, \dvol_g \,d\tau.$$
\end{proposition}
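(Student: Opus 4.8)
The plan is to bypass the crude pointwise bound $|\divergence\mathcal T_t^\epsilon|^2\le 4\,e^\epsilon(u_t,\nabla_t)(|\dot u_t|^2+\epsilon^2|\dot\alpha_t|^2)$ responsible for the factor $4$, and instead pair the limiting first variation directly with the mean curvature vector. The key input is the identity $2(\nabla_tu_t^*\nabla_tu_t+\epsilon^2\omega_t^*\omega_t)=e^\epsilon(u_t,\nabla_t)\,g-\mathcal T_t^\epsilon$: combined with $e^\epsilon(u_t^\epsilon,\nabla_t^\epsilon)\,\dvol_g\otimes dt\rightharpoonup^*\mu$, $\mathcal T_t^\epsilon\,\dvol_g\otimes dt\rightharpoonup^* A\,d\mu$, and the fact established in \cref{tf.sec} that $A(x,t)$ is the orthogonal projection onto the tangent plane of $\mu_t$, this shows that the symmetric-tensor-valued measures $2(\nabla_tu_t^*\nabla_tu_t+\epsilon^2\omega_t^*\omega_t)\,\dvol_g\otimes dt$ converge weakly-$*$ to $(g-A)\,d\mu=A^\perp d\mu$, with $A^\perp:=I-A$. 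This ``halving'' of the reference density is exactly what turns $4$ into $2$.

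Concretely, I fix $0<s<t$, $\phi\in C^2(M,\R_{\ge0})$, a cutoff $0\le\chi\in C^1_c((s,t))$, and a smooth section $X$ of the pullback of $TM$ over $M\times[s,t]$. From $\divergence\mathcal T_\tau^\epsilon=-2\langle\nabla_\tau u_\tau,\dot u_\tau\rangle-2\epsilon^2\omega_\tau(\cdot,\dot\alpha_\tau)$ (see \cref{div.id}), two Cauchy--Schwarz steps --- first pointwise, using $(\nabla u^*\nabla u)(X,X)=|\nabla_X u|^2$ and $(\omega^*\omega)(X,X)=|\iota_X\omega|^2$, then in $L^2(\chi\phi\,\dvol_g\otimes d\tau)$ --- give
$$\Big|\int\chi\phi\,\langle X,\divergence\mathcal T_\tau^\epsilon\rangle\,\dvol_g\,d\tau\Big|\le\Big(\int\chi\phi\,2(\nabla_\tau u_\tau^*\nabla_\tau u_\tau+\epsilon^2\omega_\tau^*\omega_\tau)(X,X)\,\dvol_g\,d\tau\Big)^{1/2}\Big(\int\chi\phi\cdot2(|\dot u_\tau|^2+\epsilon^2|\dot\alpha_\tau|^2)\,\dvol_g\,d\tau\Big)^{1/2}.$$
Letting $\epsilon\to0$: the left side tends to $\big|\int\chi\phi\,\langle X,\vec H_\tau\rangle\,d\mu_\tau\,d\tau\big|$ since $\divergence\mathcal T_\tau^\epsilon\,\dvol_g\otimes d\tau\rightharpoonup^*\vec H_\tau\,d\mu_\tau\otimes d\tau$; the first factor tends to $\big(\int\chi\phi\,|A^\perp X|^2\,d\mu_\tau\,d\tau\big)^{1/2}$ by the weak-$*$ convergence above (pairing the tensor measure with the continuous field $\chi\phi\,X\otimes X$ and using $\langle A^\perp,X\otimes X\rangle=|A^\perp X|^2$); the second factor I keep as a $\liminf$. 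Then I let $\chi\uparrow\uno_{(s,t)}$: since $\mu$ and $\vec H_\tau\,d\mu_\tau\otimes d\tau$ carry no mass on the time-slices $M\times\{s\}$, $M\times\{t\}$ there is no boundary contribution, and since $\chi\le\uno_{[s,t]}$ the $\liminf$-factor remains $\le\liminf_{\epsilon\to0}\int_s^t\!\int_M2\phi(|\dot u^\epsilon|^2+\epsilon^2|\dot\alpha^\epsilon|^2)$. This yields
$$\Big|\int_s^t\!\!\int_M\phi\,\langle X,\vec H_\tau\rangle\,d\mu_\tau\,d\tau\Big|\le\Big(\int_s^t\!\!\int_M\phi\,|A^\perp X|^2\,d\mu_\tau\,d\tau\Big)^{1/2}\Big(\liminf_{\epsilon\to0}\int_s^t\!\!\int_M2\phi\,(|\dot u_\tau^\epsilon|^2+\epsilon^2|\dot\alpha_\tau^\epsilon|^2)\,\dvol_g\,d\tau\Big)^{1/2}.$$

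To finish, I approximate $\vec H_\tau$ in $L^2(\phi\,d\mu_\tau\otimes d\tau)$ over $M\times[s,t]$ by smooth sections $X$ of the pullback of $TM$ --- possible since this is a finite Radon measure and $\int_s^t\!\int_M\phi|\vec H_\tau|^2\,d\mu_\tau\,d\tau\le2\Lambda<\infty$ --- and recall that $\vec H_\tau$ is the generalized mean curvature of the rectifiable (indeed, by \cref{sec.int}, integral) varifold associated with $\mathcal T_\tau$, hence $\mu_\tau$-a.e.\ normal, so $A^\perp\vec H_\tau=\vec H_\tau$. Passing to the limit along such $X$, the left side converges to $\int_s^t\!\int_M\phi|\vec H_\tau|^2\,d\mu_\tau\,d\tau$ and the first factor on the right to $\big(\int_s^t\!\int_M\phi|\vec H_\tau|^2\,d\mu_\tau\,d\tau\big)^{1/2}$; dividing (the inequality being trivial when this vanishes) and squaring gives the claim. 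The argument is essentially soft once \cref{tf.sec} is invoked; the only points requiring care are the weak-$*$ convergence of the tensor-valued measures --- which reduces to the already-recorded convergences of $e^\epsilon\,\dvol_g$ and $\mathcal T^\epsilon\,\dvol_g$ plus the identification of $A$ with the tangent projection --- and the interplay between continuous test sections and the $L^2$ vector field $\vec H_\tau$, neither of which is a real obstacle.
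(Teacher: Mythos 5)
Your proof is correct, and it takes a genuinely different --- and arguably cleaner --- route than the paper's. The paper works pointwise: it notes that $|\nabla u|_{op}^2+\epsilon^2|\omega|^2=\tfrac12 e^\epsilon$ for entire planar critical pairs, runs a blow-up argument to show the ratio $(|\nabla_\tau^\epsilon u_\tau^\epsilon|_{op}^2+\epsilon^2|\omega_\tau^\epsilon|^2)/e^\epsilon(u_\tau^\epsilon,\nabla_\tau^\epsilon)\to\tfrac12$ off a set of small $\mu^\epsilon$-measure, then applies the lower semicontinuity of $\int|\divergence\mathcal T^\epsilon|^p/(e^\epsilon)^{p-1}$ from \cite[Remark~2.2]{AmbrosioSoner} for $p\in(1,2)$, followed by H\"older on the exceptional set, Young's inequality, and a $p\to2$ limiting argument. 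You instead work at the level of the tensor-valued measures: the exact identity $2(\nabla u^*\nabla u+\epsilon^2\omega^*\omega)=e^\epsilon g-\mathcal T^\epsilon$, together with the two weak-$*$ convergences $e^\epsilon\,\dvol_g\otimes dt\rightharpoonup^*\mu$, $\mathcal T^\epsilon\,\dvol_g\otimes dt\rightharpoonup^*A\,d\mu$ and the identification of $A$ with the tangent projection from \cref{tf.sec}, immediately give $2(\nabla u^*\nabla u+\epsilon^2\omega^*\omega)\,\dvol_g\otimes dt\rightharpoonup^*A^\perp d\mu$; two applications of Cauchy--Schwarz then do the rest, once one approximates $\vec H_\tau$ (normal by Brakke's perpendicularity theorem, applicable since $\mu_\tau$ is integral by \cref{sec.int}) by smooth sections. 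Your approach sidesteps both the additional blow-up analysis and the $p\to2$ device, at the cost of not producing the ``equipartition'' information $(|\nabla u|_{op}^2+\epsilon^2|\omega|^2)/e^\epsilon\to\tfrac12$ that the paper reuses in the remark following the proposition. One small ordering caveat: you invoke the identification of $\vec H_\tau$ as the generalized mean curvature of $\mu_\tau$, which the paper records just \emph{after} the proposition --- but that identification depends only on $\sigma=\lim\sigma^\epsilon$, $\mathcal T=\lim\mathcal T^\epsilon\,\dvol_g$ and the rectifiability already in place, so it can be moved before without circularity.
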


\begin{proof}
	In fact, using Cauchy--Schwarz as above, \cref{div.id} gives
	$$ \lvert\divergence(\mathcal{T}_{t}^{\epsilon})\rvert^2\le 4(|\nabla_t^\epsilon u_t^\epsilon|_{op}^2+\epsilon^2|\omega_t^\epsilon|^2)(|\dot{u}_{t}^{\epsilon}|^2+\epsilon^2 |\dot{\alpha}_{t}^{\epsilon}|^2),$$
	where $|\nabla_t^\epsilon u_t|_{op}$ denotes the operator norm. A crucial observation at this point is the fact that
	$$|\nabla u|_{op}^2+\epsilon^2|\omega|^2=\mz e^\epsilon(u,\nabla)$$
	for entire stationary solutions $(u,\nabla)$ on the plane. Hence, a blow-up analysis analogous to the one used in the previous section gives
	$$ \left|\frac{|\nabla_\tau^\epsilon u_\tau^\epsilon|_{op}^2+\epsilon^2|\omega_\tau^\epsilon|^2}{e^{\epsilon}(u_\tau^\epsilon, \nabla_\tau^\epsilon)}-\mz\right|\le\delta_\epsilon $$
	at all points $(x,\tau)\in (M\times[s,t])\setminus F^\epsilon$, for an exceptional set $F^\epsilon\subseteq M\times[s,t]$ of measure
	$$ \mu^\epsilon(F^\epsilon)\le\delta_\epsilon, $$
	where $\lim_{\epsilon\to0}\delta_\epsilon=0$.
	
	Now, for $p\in(1,2)$, similarly as above we have
	\begin{align*}
		\int_s^t\int_{M} \phi \vert \vec{H}_\tau \vert^p \, d\mu_\tau \, d\tau
		&\le\liminf_{\epsilon \to 0}\int_{M\times[s,t]} \phi\frac{\lvert \divergence(\mathcal{T}_{\tau}^{\epsilon}) \rvert^p}{e^{\epsilon}(u_\tau^\epsilon, \nabla_\tau^\epsilon)^{p-1}} \, \dvol_g\, d\tau \\
		&\le \liminf_{\epsilon \to 0}\int_{M\times[s,t]}4^{p/2}\left(\mz+\delta_\epsilon'\right)\phi e^\epsilon(u_\tau^\epsilon,\nabla_\tau^\epsilon)^{1-p/2}(|\dot{u}_{\tau}^{\epsilon}|^2+\epsilon^2 |\dot{\alpha}_{\tau}^{\epsilon}|^2)^{p/2}\\
		&\quad+\lim_{\epsilon\to0}\int_{F^\epsilon}4^{p/2}\phi e^\epsilon(u_\tau^\epsilon,\nabla_\tau^\epsilon)^{1-p/2}(|\dot{u}_{\tau}^{\epsilon}|^2+\epsilon^2 |\dot{\alpha}_{\tau}^{\epsilon}|^2)^{p/2},
	\end{align*}
	for another sequence $\delta_\epsilon'\to 0$, where on $F^\epsilon$ we invoked the coarser pointwise bound used previously.
	On the other hand, by H\"older's inequality,
	\begin{align*} &\int_{F^\epsilon}e^\epsilon(u_\tau^\epsilon,\nabla_\tau^\epsilon)^{1-p/2}(|\dot{u}_{\tau}^{\epsilon}|^2+\epsilon^2 |\dot{\alpha}_{\tau}^{\epsilon}|^2)^{p/2}\\
	&\le\left(\int_{F^\epsilon}e^\epsilon(u_\tau^\epsilon,\nabla_\tau^\epsilon)\right)^{1-p/2}\left(\int_{F^\epsilon}(|\dot{u}_{\tau}^{\epsilon}|^2+\epsilon^2 |\dot{\alpha}_{\tau}^{\epsilon}|^2)\right)^{p/2}\\&\le\delta_\epsilon^{1-p/2}\Lambda^{p/2}, \end{align*}
	which vanishes in the limit $\epsilon\to 0$. Also, by Young's inequality,
	\begin{align*} &\int_{M\times[s,t]}\phi e^\epsilon(u_\tau^\epsilon,\nabla_\tau^\epsilon)^{1-p/2}(|\dot{u}_{\tau}^{\epsilon}|^2+\epsilon^2 |\dot{\alpha}_{\tau}^{\epsilon}|^2)^{p/2} \\
	&\le \left(1-\frac{p}{2}\right)\int_{M\times[s,t]}\phi e^\epsilon(u_\tau^\epsilon,\nabla_\tau^\epsilon)
	+\frac{p}{2}\int_{M\times[s,t]} \phi (|\dot{u}_{\tau}^{\epsilon}|^2+\epsilon^2 |\dot{\alpha}_{\tau}^{\epsilon}|^2), \end{align*}
	which gives in the limit
	\begin{align*}&\int_s^t\int_{M} \phi \vert \vec{H}_\tau \vert^p \, d\mu_\tau \, d\tau\\
	&\le \frac{4^{p/2}}{2}\left(1-\frac{p}{2}\right)\int_{M\times[s,t]}\phi\,d\mu_\tau\,d\tau
	+\liminf_{\epsilon \to 0}\frac{4^{p/2}}{2}\cdot\frac{p}{2}\int_{M\times[s,t]}\phi(|\dot{u}_{\tau}^{\epsilon}|^2+\epsilon^2 |\dot{\alpha}_{\tau}^{\epsilon}|^2).\end{align*}
	The claim follows once we let $p\to 2$.
\end{proof}

For any smooth vector field $X$ on $M$, recalling that $\sigma=\lim_{\epsilon\to0}\sigma^\epsilon$, we have
\begin{align*}
	\int_s^t\int_M\ang{X,\vec H_\tau}\,d\mu_\tau\,d\tau
	&=\lim_{\epsilon\to0}\int_s^t\int_M\ang{X,\divergence(\mathcal{T}_\tau^\epsilon)}\,\dvol_g\,d\tau\\
	&=-\lim_{\epsilon\to0}\int_s^t\int_M\ang{DX,\mathcal{T}_\tau^\epsilon}\,\dvol_g\,d\tau\\
	&=-\int_s^t\int_M DX \, d\mathcal{T}_\tau\,d\tau,
\end{align*}
where the last integral implicitly contains the usual pairing on $\operatorname{Sym}(TM)$.
Letting $X$ range in a countable dense family, we deduce that $\vec H_t$ is the generalized mean curvature vector of $\mu_t$, for a.e.\ $t$.

In terms of the operators
\begin{equation*}
	\mathcal{B}_\epsilon(u^\epsilon_t, \nabla^\epsilon_t, \phi) := \int_M [- 2 \phi \left( \vert \dot{u}_{t}^{\epsilon} \vert^2 + \epsilon^2 \vert \dot{\alpha}_{t}^{\epsilon} \vert^2 \right) + \divergence(\mathcal{T}_{t}^{\epsilon})(d \phi)],
\end{equation*}
the previous observations give us the upper semicontinuity result
$$\limsup_{\epsilon\to 0}\int_s^t\mathcal{B}(u^{\epsilon}_\tau,\nabla^{\epsilon}_\tau,\phi)\,d\tau\leq \int_s^t\left(\int_M[-\phi|\vec{H}_\tau|^2+\langle d\phi,\vec{H}_\tau\rangle]d\mu_\tau\right)\,d\tau$$
for $\phi\in C^1(M,\R_{\ge0})$ and $0<s<t$ (cf.\ \cite[Section~9]{Ilmanen} in the Allen--Cahn setting). 

Now, recalling \cref{epsBrakke}, for any fixed $\phi\in C^1(M,\R_{\ge0})$, we have 
\begin{equation*}
	\frac{d}{dt} \mu_{t}^{\epsilon}(\phi) = \mathcal{B}_\epsilon(u_{t}^\epsilon, \nabla_{t}^\epsilon, \phi),
\end{equation*}
and integrating this identity in time, we obtain for $s \leq t$, 
\begin{equation*}
	\mu_t^{\epsilon}(\phi) - \mu_{s}^{\epsilon}(\phi) = \int_s^t \mathcal{B}_\epsilon(u_{\tau}^{\epsilon}, \nabla_{\tau}^{\epsilon}, \phi) \,d\tau. 
\end{equation*}
Letting $\epsilon \rightarrow 0$, and appealing to the upper semicontinuity for $\mathcal{B}_{\epsilon}(u_t^{\epsilon},\nabla_t^{\epsilon},\phi)$ described above, we deduce that
\begin{equation}\label{ineq.Brakke}
	\mu_t(\phi) - \mu_s(\phi) \leq \int_s^t \int_M [-\phi\vert \vec{H}_\tau \vert^2 + \langle d \phi, \vec{H}_\tau \rangle] \, d\mu_\tau \,d\tau, 
\end{equation} 
for all $0 < s < t$; actually, recalling the integral bound \cref{br.sum.chain}, it holds also for $s=0<t$, by a trivial limiting argument and the fact that
$\mu_0(\phi)\ge\limsup_{s\to 0}\mu_s(\phi)$.

By virtue of \cref{perpendicularity} below, this conclusion is the same as \cref{brakke.ineq.bis}, namely one of the two forms of Brakke's inequality for families of integral varifolds discussed in \cref{vari.prelim}.
Note that \cref{brakke.sum} holds thanks to \cref{br.sum.chain}.

\begin{remark}\label{perpendicularity}
	For integral varifolds $V$ satisfying the first and second condition of Brakke's inequality assumptions, we have that $\vec{H}$ is orthogonal to $T_xV$ for $|V|$-a.e.\ $x$. This result is due to Brakke: see \cite[Chapter~5]{Brakke}. Alternatively, one can establish perpendicularity of the mean curvature vector reasoning along the same lines as \cite[Section~6]{AmbrosioSoner}. 
\end{remark}

\subsection{Proof of \cref{thmBrakke}} The combined results of Sections \ref{secEpsIneq}--\ref{brakke.sec} give the proof of \cref{thmBrakke}: given a family $(u_t^{\epsilon},\nabla_t^{\epsilon})$ of solutions to the gradient flow system \cref{gf} on $M$, \cref{CorLimMeas} allows us to extract a limiting family of measures $(\mu_t)_{t}$ along a subsequence $\epsilon_j\to 0$. Rectifiability of these measures follows from \cref{rect.coroll}, while the discussion in \cref{sec.int} implies integrality; finally, \cref{ineq.Brakke} gives Brakke's inequality.

\begin{remark}
	Naturally, one can combine the analysis of the preceding sections with arguments from \cite{Ilmanen} to establish other properties of the flow \cref{gf} and the limiting measure $\mu$ analogous to some results of \cite{Ilmanen} in the Allen--Cahn setting. E.g., one can combine a forward lower density bound similar to that of \cite[Section~7]{Ilmanen} with the analysis of \cref{secEpsIneq} above to establish a natural ``equipartition of energy'' result analogous to that of \cite[Section~8]{Ilmanen}: namely, the functions 
	\begin{equation}
		\Xi^\epsilon = \epsilon^2 \vert F_{\nabla^\epsilon} \vert^2 - \frac{1}{4 \epsilon^2}(1 - \vert u^{\epsilon} \vert^2)^2, 
	\end{equation}
	vanish in $L^1_{loc}(M\times [0,\infty))$ as $\epsilon\to 0$. Similar to the proof of \cref{sharper.brakke}, this property also follows from the blow-up analysis from the previous section, combined with the fact that $\Xi^\epsilon=0$ for entire stationary solutions on the plane.
\end{remark}

\section{Enhanced motion in the sense of Ilmanen}\label{EnMo}
In this section, we prove \cref{thmEnhanced} from the introduction, whose statement we recall here for convenience.

\begin{theorem}
	Let $L\to M$ be a Hermitian line bundle over a closed, oriented Riemannian manifold $(M^n,g)$, and let $\Gamma_0$ be an integral $(n - 2)$-cycle Poincaré dual to the Euler class $c_1(L) \in H^2(M; \mathbb{Z})$. Then there exists a family of solutions $(u^{\epsilon}_t,\nabla^{\epsilon}_t)$ of \cref{gf}
	and an integer multiplicity $(n - 1)$-current $\Gamma\in {\bf I}_{loc}^{n-1}( M\times[0,\infty))$ such that $\partial \Gamma = \Gamma_0$, 
	$$\mu_0 =\lim_{\epsilon\to 0}e^{\epsilon}(u^{\epsilon}_0,\nabla^{\epsilon}_0)\,\dvol_g= 2 \pi \vert \Gamma_0 \vert,$$
	and denoting by $\mu_t$ the associated limiting Brakke flow as $\epsilon\to 0$, the pair $(\Gamma, \frac{1}{2\pi} \mu_t)$ defines an enhanced motion in the sense of \cref{defEnh}.
\end{theorem}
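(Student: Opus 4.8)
The plan is to follow the strategy of \cite[Theorem~D]{BethuelOrlandiSmets}, with the $\Gamma$-convergence theory of \cite{gammaConv} playing the role of \cite{JSgamma}. First I would fix the initial data: given the integral $(n-2)$-cycle $\Gamma_0$ Poincaré dual to $c_1(L)$, I invoke the recovery-sequence construction of \cite{gammaConv} to produce smooth pairs $(u_0^\epsilon,\nabla_0^\epsilon)$ on $L$ with $E_\epsilon(u_0^\epsilon,\nabla_0^\epsilon)\to 2\pi\,\mathbb{M}(\Gamma_0)$, with energy measures $e^\epsilon(u_0^\epsilon,\nabla_0^\epsilon)\,d\mathrm{vol}_g\rightharpoonup^* 2\pi|\Gamma_0|$, and with gauge-invariant Jacobians $\frac1{2\pi}J(u_0^\epsilon,\nabla_0^\epsilon)\to\Gamma_0$ as $(n-2)$-currents. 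Since the initial energies are then uniformly bounded, $E_\epsilon(u_0^\epsilon,\nabla_0^\epsilon)\le\Lambda$, \cref{thmBrakke} applies to the resulting solutions $(u_t^\epsilon,\nabla_t^\epsilon)$ of \cref{gf}, producing along a subsequence the limiting Brakke flow $(\mu_t)_{t\ge0}$ with $\mu_0=2\pi|\Gamma_0|$; note that $\frac1{2\pi}\mu_t$ is again a Brakke flow, since $\mathcal B$ and $\overline D_t$ scale linearly in $\mu$ and the generalized mean curvature is invariant under multiplying the weight by a constant.

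Next I would construct the spacetime current. Pull the flow back to a pair $(\tilde u^\epsilon,\widetilde\nabla^\epsilon)$ on the pullback bundle $\tilde L$ over $M\times[0,\infty)$ by setting $\tilde u^\epsilon(x,t):=u_t^\epsilon(x)$ and $\widetilde\nabla^\epsilon:=\nabla_0^\epsilon-i\tilde\alpha^\epsilon$, with $\tilde\alpha^\epsilon$ the $t$-dependent $1$-form $\alpha_t^\epsilon$ on $M$ (no $dt$ component). Then $J(\tilde u^\epsilon,\widetilde\nabla^\epsilon)$ is a $2$-form on the $(n+1)$-manifold $M\times[0,\infty)$, hence dual to an $(n-1)$-current; its components are built from $\nabla_t^\epsilon u_t^\epsilon$, $\omega_t^\epsilon$, $\dot u_t^\epsilon$, and $\dot\alpha_t^\epsilon$, so the energy identity \cref{enerID} (in particular $\nu^\epsilon(M\times[0,\infty))\le\Lambda/2$) together with the slab bound $\int_0^T E_\epsilon(u_t^\epsilon,\nabla_t^\epsilon)\,dt\le T\Lambda$ and Young's inequality gives a uniform mass bound for $\frac1{2\pi}J(\tilde u^\epsilon,\widetilde\nabla^\epsilon)$ on each compact slab $M\times[0,T]$. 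The estimates of \cite{gammaConv} also show $d\bigl(\frac1{2\pi}J(\tilde u^\epsilon,\widetilde\nabla^\epsilon)\bigr)\to0$ in the flat topology on compact slabs, so that (again by the compactness and integrality results of \cite{gammaConv}) after a further subsequence we obtain an integral current $\Gamma\in\mathbf I_{\mathrm{loc}}^{n-1}(M\times[0,\infty))$ with $\frac1{2\pi}J(\tilde u^\epsilon,\widetilde\nabla^\epsilon)\to\Gamma$, together with the spacetime lower bound $|\Gamma|\le\frac1{2\pi}\mu$ on $M\times(0,\infty)$ coming from the $\Gamma$-convergence lower bound relating $|J|$ to the energy density.

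It then remains to verify the five conditions of \cref{defEnh} for $(\Gamma,\frac1{2\pi}\mu_t)$. Property (i) follows because the spatial restriction of $\frac1{2\pi}J(\tilde u^\epsilon,\widetilde\nabla^\epsilon)$ at $t=0$ is exactly $\frac1{2\pi}J(u_0^\epsilon,\nabla_0^\epsilon)\to\Gamma_0$, while $d(\frac1{2\pi}J)\to0$ in the interior, so a Stokes/slicing argument gives $(-1)^{n+1}\partial\Gamma=\Gamma_0\times\{0\}$ and identifies the initial slice with $\Gamma_0$; property (ii) follows from $\mathbb{M}(\Gamma_B)=|\Gamma|(M\times B)\le\frac1{2\pi}\mu(M\times B)=\frac1{2\pi}\int_B\mu_t(M)\,dt\le\frac{\Lambda}{2\pi}|B|$; property (iii) is \cref{thmBrakke}(iii) after the rescaling noted above; property (iv) is $\frac1{2\pi}\mu_0=|\Gamma_0|$ by Step~1. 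For property (v), one uses the slicing formula for $\Gamma$ together with the fact that, for $\mathcal L^1$-a.e.\ $t$, the time-$t$ slice of $\frac1{2\pi}J(\tilde u^\epsilon,\widetilde\nabla^\epsilon)$ agrees with the spatial Jacobian $\frac1{2\pi}J(u_t^\epsilon,\nabla_t^\epsilon)$ up to an error controlled by $|\dot u_t^\epsilon|+\epsilon|\dot\alpha_t^\epsilon|$ (which is negligible for a.e.\ $t$ along the subsequence, as $\nu^\epsilon$ is bounded); the $\Gamma$-convergence lower bound applied on such slices gives $|\Gamma_t|\le\frac1{2\pi}\mu_t$ for a.e.\ $t$, which is then promoted to all $t\ge0$ using the semi-decreasing property of $t\mapsto\mu_t$ and lower semicontinuity of mass along the (flat-continuous) map $t\mapsto\Gamma_t$.

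The main obstacle will be this last step: slicing does not commute with weak-$*$ limits, so relating $\Gamma_t$ to the spatial Jacobians $J(u_t^\epsilon,\nabla_t^\epsilon)$ and extracting the inequality $|\Gamma_t|\le\frac1{2\pi}\mu_t$ for \emph{every} $t$ requires a careful combination of a coarea argument, the slicewise $\Gamma$-convergence lower bound of \cite{gammaConv}, and a semicontinuity argument to pass from a.e.\ $t$ to all $t$; one must also rule out a spurious boundary component of $\Gamma$ "at infinite time", which follows from the slab mass bounds. The remaining steps are more routine, relying on the $\Gamma$-convergence machinery as a black box.
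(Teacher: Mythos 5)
Your high-level strategy matches the paper's: recovery sequences from \cite{gammaConv}, the spacetime pullback pair $(\tilde u^\epsilon,\widetilde\nabla^\epsilon)$, the gauge-invariant Jacobian $\tilde J^\epsilon=J(\tilde u^\epsilon,\widetilde\nabla^\epsilon)$, the spacetime energy bound via \cref{enerID}, the gammaConv compactness to extract $\Gamma$, and Stokes together with closedness of $\tilde J^\epsilon$ to compute $\partial\Gamma$ and identify the slices. That much is on target.

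However, there is a genuine gap in your treatment of condition (ii), absolute continuity of $B\mapsto\mathbb M(\Gamma_B)$. You assert the spacetime bound $|\Gamma|\le\tfrac{1}{2\pi}\mu$ on $M\times(0,\infty)$, deduced from ``the $\Gamma$-convergence lower bound relating $|J|$ to the energy density,'' and then conclude immediately that $\mathbb M(\Gamma_B)\le\tfrac{1}{2\pi}\mu(M\times B)\le\tfrac{\Lambda}{2\pi}|B|$. This does not follow. The pointwise bound is $|\tilde J^\epsilon|\le e^\epsilon(\tilde u^\epsilon,\widetilde\nabla^\epsilon)$, but the spacetime energy density of the pullback pair is \emph{not} the spatial density $e^\epsilon(u^\epsilon_t,\nabla^\epsilon_t)$; by \cref{tilde.u.comp} and its analogue for the curvature, $e^\epsilon(\tilde u^\epsilon,\widetilde\nabla^\epsilon)=e^\epsilon(u^\epsilon_t,\nabla^\epsilon_t)+|\dot u^\epsilon_t|^2+\epsilon^2|\dot\alpha^\epsilon_t|^2$. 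Lower semicontinuity of mass therefore gives only $|\Gamma|\le\tfrac{1}{2\pi}(\mu+2\nu)$, and the shrinking measure $\nu$, while globally finite (by \cref{enerID}), is not a priori absolutely continuous in time---it could charge individual time slices. So the estimate you wrote yields local finiteness but \emph{not} absolute continuity.

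The paper closes this gap by exploiting the explicit structure of $\tilde J^\epsilon$: by \cref{tilde.j.comp}, the $dt$-components of $\tilde J^\epsilon$ are products of a ``spatial'' factor ($|\nabla^\epsilon_tu^\epsilon_t|$ or $\tfrac{1-|u^\epsilon_t|^2}{2\epsilon}$) and a ``temporal'' factor ($|\dot u^\epsilon_t|$ or $\epsilon|\dot\alpha^\epsilon_t|$). Applying Young's inequality on $M\times U$ with the $|U|$-dependent weight $\mathcal L^1(U)^{\pm1/2}$ gives
\[
\int_{M\times U}|\tilde J^\epsilon|\le\bigl(1+\mathcal L^1(U)^{-1/2}\bigr)\mu^\epsilon(M\times U)+\mathcal L^1(U)^{1/2}\,\nu^\epsilon(M\times U)\le E_\epsilon(u^\epsilon_0,\nabla^\epsilon_0)\bigl(\mathcal L^1(U)+2\sqrt{\mathcal L^1(U)}\bigr),
\]
which is the $\sqrt{|B|}$-modulus you actually need. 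Without this weighted estimate the temporal term does not vanish as $|U|\to0$.

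As a secondary point, you flag condition (v) as ``the main obstacle,'' invoking slicing, coarea, a.e.\ arguments and promotion via semicontinuity. The paper's route is simpler and gives the inequality for \emph{every} $t$: using closedness of $\tilde J^\epsilon$ and Stokes on $M\times[0,t]$, the $dt$-components integrate to zero on the time slices, so the slice $\Gamma_t$ is identified directly as $\lim_\epsilon\tfrac{1}{2\pi}J(u^\epsilon_t,\nabla^\epsilon_t)$, and then the elementary pointwise bound $|J(u^\epsilon_t,\nabla^\epsilon_t)|\le e^\epsilon(u^\epsilon_t,\nabla^\epsilon_t)$ already gives $|\Gamma_t|\le\tfrac{1}{2\pi}\mu_t$. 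The slicing-plus-promotion detour is avoidable.
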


\begin{proof} Fix an integral cycle $\Gamma_0$ Poincaré dual to $c_1(L)$. Part (ii) of \cite[Theorem~1.2]{gammaConv} implies the existence of a sequence $(u^\epsilon_0, \nabla^\epsilon_0)$ of smooth sections and connections on $L$ such that 
	\begin{equation*}
		J(u^\epsilon_0, \nabla^\epsilon_0) \rightharpoonup^* 2 \pi \Gamma_0,
	\end{equation*}
	in the sense of currents as $\epsilon \to 0$, where the two-forms $J(u^{\epsilon}_0,\nabla^{\epsilon}_0)$ (as defined in \cref{j.def}) are naturally identified with $(n-2)$-currents via the pairing
	$$\Omega^{n-2}(M)\ni \zeta\mapsto \int_M J(u^{\epsilon}_0,\nabla^{\epsilon}_0)\wedge \zeta,$$
	and we have the weak-$*$ convergence of measures
	\begin{equation*}
		\lim_{\epsilon \rightarrow 0} e^{\epsilon}(u_0^{\epsilon},\nabla_0^{\epsilon})\,\dvol_g = 2 \pi\,d|\Gamma_0|.
	\end{equation*}

	Consider now solutions $(u^\epsilon_t, \nabla^\epsilon_t)_{t \geq 0}$ of the gradient flow \cref{gf} with initial data $(u^\epsilon_0, \nabla^\epsilon_0)$, whose energy measures $\mu_t^{\epsilon}$ converge (subsequentially) as $\epsilon\to 0$ to an integral Brakke flow $(\mu_t)_{t \geq 0}$, by \cref{thmBrakke}. Let $\pi:M\times[0,\infty) \to M$ denote the obvious projection, and along the same subsequence $\epsilon\to 0$, define pairs $(\tilde{u}^{\epsilon},\widetilde{\nabla}^{\epsilon})$ on the pullback bundle $\pi^*L \to  M\times[0,\infty)$ by 
	$$\tilde{u}^{\epsilon}(x,t):=u^{\epsilon}_t(x)$$
	and, for any section $v$,
	$$(\widetilde{\nabla}^{\epsilon})_Xv(x,t):=(\nabla^{\epsilon}_t)_{\pi_*X}v(x,t)+\frac{\partial v}{\partial t}(x,t)\,dt(X).$$
	By direct computation, it is then easy to see that
	$$\widetilde{\nabla}^{\epsilon}\tilde{u}^{\epsilon}(\cdot,t)=\pi^*(\nabla_t^{\epsilon}u^{\epsilon}_t)+\frac{\partial u^{\epsilon}_t}{\partial t}\,dt,$$
	and consequently
	\begin{equation}\label{tilde.u.comp}
		|\widetilde{\nabla}^{\epsilon}\tilde{u}^{\epsilon}|^2(\cdot,t)=|\nabla^{\epsilon}_tu^{\epsilon}_t|^2+\left|\frac{\partial u^{\epsilon}_t}{\partial t}\right|^2.
	\end{equation}
	Writing $\nabla^{\epsilon}_t-\nabla^{\epsilon}_0=:-i\alpha^{\epsilon}_t$ and writing the one-form $\alpha^{\epsilon}_t$ in local coordinates on $M$ as
	$$\alpha^{\epsilon}_t=(\alpha^{\epsilon}_t)_j\,dx^j,$$
	we see that the real two-form $\omega_{\widetilde{\nabla}^{\epsilon}}$ encoding the curvature of $\widetilde{\nabla}^\epsilon$ is given by
	$$\omega_{\widetilde{\nabla}_\epsilon}(\cdot,t)=\pi^*(\omega_{\nabla^{\epsilon}_t})+\sum_{j=1}^n\frac{\partial (\alpha^{\epsilon}_t)_j}{\partial t}\,dt\wedge dx^j.$$
	As a consequence, we have
	\begin{equation}
		|F_{\widetilde{\nabla}^{\epsilon}}|(\cdot,t)^2=|F_{\nabla^{\epsilon}_t}|^2+\left|\frac{\partial \alpha^{\epsilon}_t}{\partial t}\right|^2,
	\end{equation}
	and combining this with \cref{tilde.u.comp}, we see that
	\begin{align*}
		\int_{ M\times[0,T]} e^{\epsilon}(\tilde{u}^{\epsilon},\widetilde{\nabla}^{\epsilon}) &=\int_0^TE_{\epsilon}(u^{\epsilon}_t,\nabla^{\epsilon}_t)\,dt
		+\int_0^T\left(\int_M|\dot{u}^{\epsilon}_t|^2+\epsilon^2|\dot{\alpha}^{\epsilon}_t|^2\right)\,dt.
	\end{align*}
	Controlling the right-hand side via the energy identity \cref{enerID}, we deduce that
	\begin{align*}
		\int_{ M\times[0,T]}e^{\epsilon}(\tilde{u}^{\epsilon},\widetilde{\nabla}^{\epsilon}) &=\int_0^TE_{\epsilon}(u^{\epsilon}_t,\nabla^{\epsilon}_t)\,dt
		+\frac{1}{2}[E_{\epsilon}(u^{\epsilon}_0,\nabla^{\epsilon}_0)-E_{\epsilon}(u^{\epsilon}_T,\nabla^{\epsilon}_T)]\\
		&\leq \left(T+\frac{1}{2}\right)E_{\epsilon}(u^{\epsilon}_0,\nabla^{\epsilon}_0).
	\end{align*}
	In particular, since
	$$\lim_{\epsilon\to 0}E_{\epsilon}(u^{\epsilon}_0,\nabla^{\epsilon}_0)=2\pi \mathbb{M}(\Gamma_0)<\infty,$$
	it follows that 
	\begin{equation}\label{tilde.ener.bd}
		\int_{ M\times[0,T]}e^{\epsilon}(\tilde{u}^{\epsilon},\widetilde{\nabla}^{\epsilon})\leq C (T+1)
	\end{equation}
	for $C$ independent of $\epsilon$. 
	
	For each $N\in \mathbb{N}$, we can then apply \cite[Theorem~1.2(i)]{gammaConv} to deduce the existence of a subsequence $\epsilon_j\to 0$ along which the $(n-1)$-currents associated with the two-forms
	$$\tilde{J}^{\epsilon}:=J(\tilde{u}^{\epsilon},\widetilde{\nabla}^{\epsilon})$$
	converge to $2\pi\Gamma$ for an integer-multiplicity $(n-1)$-current $\Gamma\in {\bf I}^{n-1}( M\times[0,N])$. As written, \cite[Theorem~1.2(i)]{gammaConv} applies to bundles over closed manifolds, but it is easy to see that the same analysis holds over $ M\times[0,N]$; indeed, one could even appeal directly to the closed case by doubling the pairs $(\tilde{u}_{\epsilon},\widetilde{\nabla}_{\epsilon})$ on $ M\times[0,N]$ across the boundary and periodizing, to get a family on the closed manifold $ M\times(\R/2N\mathbb{Z})$.
	
	Moreover, by a simple diagonal sequence argument, we can arrange that this subsequence $\epsilon_j\to 0$ is the same for every $N\in \mathbb{N}$, so that we have the convergence
	$$\tilde{J}^{\epsilon}\rightharpoonup^* 2\pi\Gamma$$
	of $(n-1)$-currents globally on $ M\times[0,\infty)$.
	
	Moreover, recall from \cite{gammaConv} that the two-forms $\tilde{J}^{\epsilon}=J(\tilde{u}^{\epsilon},\widetilde{\nabla}^{\epsilon})$ are closed, and therefore, for any $0\leq a<b<\infty$ and any $(n-2)$-form $\zeta\in \Omega^{n-2}_c( M\times[0,\infty))$, Stokes' theorem gives
	$$(-1)^n\int_{ M\times[a,b]}\tilde{J}^{\epsilon}\wedge d\zeta=\int_{ M\times\{b\}}\tilde{J}^{\epsilon}\wedge \zeta-\int_{ M\times\{a\}}\tilde{J}^{\epsilon}\wedge \zeta.$$
	On the other hand, a simple computation yields
	\begin{equation}\label{tilde.j.comp}\tilde{J}^{\epsilon}(\cdot,t)=\pi^*(J(u^{\epsilon}_t,\nabla^{\epsilon}_t))+\sum_{j=1}^n\left(2\left\langle i\frac{\partial u^{\epsilon}_t}{\partial t},(\nabla_t^{\epsilon})_{\partial_j}u_t^{\epsilon}\right\rangle+(1-|u_t^{\epsilon}|^2)\frac{\partial (\alpha^{\epsilon}_t)_j}{\partial t}\right)\,dt\wedge dx^j,
	\end{equation}
	and since the terms containing a $dt$ component vanish when integrated along a time slice, it follows that
	$$(-1)^n\int_{ M\times[a,b]}\tilde{J}^{\epsilon}\wedge d\zeta=\int_M J(u^{\epsilon}_b,\nabla^{\epsilon}_b)\wedge \zeta-\int_M J(u^{\epsilon}_a,\nabla^{\epsilon}_a)\wedge \zeta.$$
	Taking $a:=0$ and $b$ large enough, we deduce that
	$$(-1)^n\int_{M\times [0,\infty)}\tilde{J}^{\epsilon}\wedge d\zeta=-\int_M J(u^{\epsilon}_0,\nabla^{\epsilon}_0)\wedge \zeta,$$
	and passing to the limit as $\epsilon\to 0$, we deduce that
	$$(-1)^{n+1}\de\Gamma=\Gamma_0\times\{0\},$$
	and of course we can achieve $\de \Gamma=\Gamma_0\times\{0\}$ simply by reversing the orientation of $\Gamma$.
	In particular, $\Gamma_0$ is indeed the slice of $\Gamma$ at time $0$.
	Recalling that the other slices $\Gamma_t\in {\bf I}^{n-2}(M)$ satisfy
	$$\Gamma_t-\Gamma_0=(-1)^n\pi_*[\partial (\Gamma \mres ( M\times[0,t]))],$$
	we deduce from the previous identity (with $a:=0$ and $b:=t$) that
	$$2\pi\Gamma_t=\lim_{\epsilon\to 0}J(u^{\epsilon}_t,\nabla^{\epsilon}_t).$$
	In particular, it follows that, for all $t\in [0,\infty)$ and $\zeta\in\Omega^{n-2}(M),$
	\begin{align*}
		2\pi\int_{\Gamma_t}\zeta&=\lim_{\epsilon\to 0}\int_M J(u^{\epsilon}_t,\nabla_t^{\epsilon})\wedge \zeta \\
		&\leq \liminf_{\epsilon\to 0}\int_M|J(u^{\epsilon}_t,\nabla^{\epsilon}_t)||\zeta|\\
		&\leq \lim_{\epsilon\to 0}\int_M e^{\epsilon}(u_t^\epsilon,\nabla_t^\epsilon)|\zeta|\\
		&=\int |\zeta|\, d\mu_t.
	\end{align*}
	Thus, we see that $|\Gamma_t|\leq \frac{1}{2\pi}\mu_t$, so that condition (v) of \cref{defEnh} is satisfied, and the only condition left to check is (ii), asserting that the measure $B\mapsto \mathbb{M}(\Gamma\mres (M\times B))$ for $B\subseteq \R$ is locally finite and absolutely continuous with respect to the Lebesgue measure. To this end, we proceed as in \cite[pp.~144--146]{BethuelOrlandiSmets}. Given a bounded open set $U\subset[0,\infty)$, it suffices to show that $\mathbb{M}(\Gamma\mres (M\times U))$ is bounded by a multiple of $\sqrt{\mathcal{L}^1(U)}$ when $\mathcal{L}^1(U)\in(0,1)$. Indeed, by \cref{tilde.j.comp}, it is easy to see that
	\begin{align*}
		\int_{M\times U}|\tilde{J}^{\epsilon}|&\leq \int_{M\times U}|\pi^*(J(u_t^{\epsilon},\nabla_t^{\epsilon}))|\\
		&\quad+\int_U\left[\int_M \left(2\left|\frac{\partial u_t^{\epsilon}}{\partial t}\right||\nabla_t^{\epsilon}u_t^{\epsilon}|
		+(1-|u_t^{\epsilon}|^2)\left|\frac{\partial \alpha_t^{\epsilon}}{\partial t}\right|\right)\right]\,dt\\
		&\leq \int_U\left(\int_M e^{\epsilon}(u_t^{\epsilon},\nabla_t^{\epsilon})\right)\,dt\\
		&\quad+\mathcal{L}^1(U)^{-1/2}\int_U\left(\int_M|\nabla_t^{\epsilon}u_t^{\epsilon}|^2+\frac{(1-|u_t|^2)^2}{4\epsilon^2}\right)\,dt\\
		&\quad+\mathcal{L}^1(U)^{1/2}\int_U\left(\int_M\left|\frac{\partial u_t^{\epsilon}}{\partial t}\right|^2+\epsilon^2\left|\frac{\partial \alpha_t^{\epsilon}}{\partial t}\right|^2\right)\,dt\\ 
		&\leq (1+\mathcal{L}^1(U)^{-1/2})\mu^{\epsilon}(M\times U)+\mathcal{L}^1(U)^{1/2}\nu^{\epsilon}(M\times U)\\
		&\leq (\mathcal{L}^1(U)+2\sqrt{\mathcal{L}^1(U)})E_{\epsilon}(u_0^{\epsilon},\nabla_0^{\epsilon}),
	\end{align*}
	where we used Young's inequality, as well as the obvious bound $\mu^{\epsilon}(M\times U)\le\mathcal{L}^1(U)E_{\epsilon}(u_0^{\epsilon},\nabla_0^{\epsilon})$ and \cref{nu.en},
	and passing to the limit $\epsilon\to 0$ gives
	$$\limsup_{\epsilon \to 0}\int_{M\times U}|\tilde{J}^{\epsilon}|\leq 2\pi \mathbb{M}(\Gamma_0)(\mathcal{L}^1(U)+2\sqrt{\mathcal{L}^1(U)}).$$
	Finally, since $\Gamma$ is given as the distributional limit of $\frac{1}{2\pi}\tilde{J}^{\epsilon}$ along some subsequence $\epsilon_j\to 0$, lower semicontinuity of mass under weak-$*$ convergence gives
	$$\mathbb{M}(\Gamma\mres (M\times U))\leq \mathbb{M}(\Gamma_0)(\mathcal{L}^1(U)+2\sqrt{\mathcal{L}^1(U)}),$$
	from which the desired absolute continuity statement follows.
\end{proof}

\nocite{*}
\printbibliography

\end{document}